\title[Nuclear Dimension of Partial Crossed Products]{Nuclear Dimension of Crossed Products associated to Partial Dynamical Systems}
\author{Shirly Geffen}
\address{Department of Mathematics, Ben Gurion University of the Negev, Be'er Sheva, Israel. Mathematisches Institut, WWU M\"{u}nster, Germany.}
\email{shirlyg@post.bgu.ac.il}
\thanks{The author was supported by Israel Science Foundation grant no. 476/16, a 		Kreitman foundation fellowship, a Minerva
	fellowship programme, partially supported by Deutsche Forschungsgemeinschaft (DFG, German Research Foundation) through SFB 878 and under Germany’s Excellence Strategy – EXC 2044 – 390685587, Mathematics Münster – Dynamics – Geometry – Structure, and also partially supported by ERC Advanced Grant 834267 - AMAREC}
\def\firstcircle{(70:0.9cm) circle (1.2cm)}
\def\secondcircle{(190:0.9cm) circle (1.2cm)}
\def\thirdcircle{(308:0.9cm) circle (1.2cm)}
\theoremstyle{plain}
\theoremstyle{definition}
\newtheorem{lma}{Lemma}[section]
\newaliascnt{ThmCt}{lma}
\newtheorem{Thm}[ThmCt]{Theorem}
\newaliascnt{ClaimCt}{lma}
\newtheorem{Claim}[ClaimCt]{Claim}
\newaliascnt{CorCt}{lma}
\newtheorem{Cor}[CorCt]{Corollary}
\newaliascnt{ObsCt}{lma}
\newtheorem{Obs}[ObsCt]{Observation}
\newaliascnt{ProbCt}{lma}
\newtheorem{Prob}[ProbCt]{Problem}
\newaliascnt{PropCt}{lma}
\newtheorem{Prop}[PropCt]{Proposition}
\newtheorem*{Thm*}{Theorem}
\newtheorem*{Cor*}{Corollary}
\newtheorem*{Prop*}{Proposition}
\newaliascnt{DefCt}{lma}
\newtheorem{Def}[DefCt]{Definition}
\newaliascnt{RmkCt}{lma}
\newtheorem{Rmk}[RmkCt]{Remark}
\newaliascnt{ExlCt}{lma}
\newtheorem{Exl}[ExlCt]{Example}
\numberwithin{equation}{section}
\newcommand{\id}{\mathrm{id}}
\newcommand{\dimnuc}{\mathrm{dim}_{\mathrm{nuc}}}
\newcommand{\dr}{\mathrm{dr}}
\newcommand{\Prim}{\mathrm{Prim}}
\newcommand{\Aut}{\mathrm{Aut}}
\newcommand{\LCH}{locally compact Hausdorff }
\begin{document}
\begin{abstract}
	We bound the nuclear dimension of crossed products associated to some partial actions of finite groups or $\mathbb{Z}$ on finite dimensional locally compact Hausdorff second countable spaces. Our results apply to globalizable partial actions, finite group partial actions, minimal partial automorphisms, and partial automorphisms acting on zero-dimensional spaces, or a class of one dimensional spaces, containing $1$-dimensional CW complexes. This extends work on global systems by Hirshberg and Wu.
\end{abstract}
\maketitle

\section{Introduction}\label{s:Int}

In 1989 Elliott conjectured that separable simple nuclear $C^*$-algebras can be classified by an invariant consisting of $K$-theoretic data. Elliott's program (see \cite{RorYBook} for an overview) was a major focus of research involving dozens of researchers over a period of decades. One major milestone was the Kirchberg-Phillips classification in the setting of purely infinite $C^*$-algebras satisfying the UCT \cite{Phillips-PurelyInf, KirPhi}.
An increasingly large classes of $C^*$-algebras were classified also in the stably finite case (see \cite{EllToms} for a survey). However, a counterexample was constructed by Toms in \cite{Toms1}. This led to searching for a dividing line between ``nice'' $C^*$-algebras which can be classified and ones which are not. The key concept turned out to be nuclear dimension, which in the past few years was shown to be the missing ingredient. This concept is a noncommutative analogue of topological covering dimension, in the sense that $\dimnuc(C_0(X))=\dim(X)$ for $X$ a second-countable \LCH space. With this notion in hand, the classification of unital simple separable $C^*$-algebras with finite nuclear dimension which satisfy the UCT has been completed \cite{EGLN, GLN1, GLN2}.

This is one reason why extending the class of $C^*$-algebras which are known to be of finite nuclear dimension is an important task. Though lower bounds for this invariant are usually not easy to find, in most cases we only need to know that upper bounds exist.

The notion of nuclear dimension was first introduced by Winter and Zacharias \cite{Winter-Zacharias}. In that paper they pose the following problem.

\begin{Prob}\cite[Problem~9.4.]{Winter-Zacharias}\label{pr:FinDimNucCrossProd}
	Let $G$ be a group, $A$ be a $C^*$-algebra and $\alpha: G\to \Aut(A)$ be a group homomorphism. Find conditions on $A$, $G$ and $\alpha$, under which $\dimnuc(A\rtimes_{\alpha}G)$ is finite.
\end{Prob}

One strategy is to place the main focus on conditions on $\alpha$, see \cite{SWZ, HSWW, Gar, GWY, HWZ}.
For example, Hirshberg, Winter and Zacharias develop in \cite{HWZ} the theory of Rokhlin dimension for finite groups and integers actions on unital separable $C^*$-algebras (extended in \cite{HP} to the non-unital case). As a result, they show that the nuclear dimension of these crossed products is bounded by a polynomial in the Rokhlin dimension of the action and the nuclear dimension of the base algebra.
In \cite{Gar} Gardella extends the concept of Rokhlin dimension to compact group actions and proves the corresponding result for the crossed products \cite[Theorem~3.4]{Gar2}. He shows that in the commutative case, an action by a compact Lie group has finite Rokhlin dimension if and only if it induces a free action on the spectrum \cite[Theorem~4.2]{Gar}, and raises the question whether this is the case also for arbitrary compact groups \cite[Question~5.3]{Gar}. The following example, that can be deduced from \cite{Lev}, answers this question negatively. Moreover, to the author's knowledge, this is a first example of a compact group acting freely on a finite dimensional compact space such that the associated crossed product has infinite nuclear dimension.

\begin{Exl}\label{e:Levin}
	Let $G$ and $X$ be as in \cite[Theorem~1.1]{Lev}. Then $G$ is a Cantor group acting freely on a $1$-dimensional compact metric space $X$. Green's imprimitivity Theorem \cite[Corollary~4.11]{Will} implies strong Morita equivalence between $C(X)\rtimes G$ and $C(X/G)$. Therefore, $\dimnuc(C(X)\rtimes G)=\dim(X/G)=\infty$.
\end{Exl}

Another example, of a circle action on a (non commutative) $C^*$-algebra with finite nuclear dimension such that the crossed product has infinite nuclear dimension can be deduced as follows. Take $\alpha$ to be a minimal homeomorphism on a compact infinite dimensional metric space $X$ such that $\alpha$ has mean dimension zero (existence follows from \cite[Section~3]{DPS}). The results of \cite{ElliottNiu} imply $\dimnuc(C(X)\rtimes_\alpha\mathbb{Z})<\infty$. Letting $\hat{\alpha}$ denote the dual action of the circle, we have
Morita equivalence between $C(X)$ and the double crossed product. Thus,
$\dimnuc(C(X)\rtimes_\alpha\mathbb{Z}\rtimes_{\hat{\alpha}}\mathbb{T})=\infty$.

However, examples with $G$ finite group, or the group of integers acting on a $C^*$-algebra $A$ with finite nuclear dimension such that $\dimnuc(A\rtimes G)=\infty$ are still not known.

Considering $\mathbb{Z}$ actions on commutative $C^*$-algebras, it is shown in \cite{HWZ} that automorphisms arising from minimal homeomorphisms of finite dimensional compact metrizable spaces always have finite Rokhlin dimension. The latter result has been generalized shortly afterwards by Szab\'{o} \cite{Sza}. In contrast to the compact group case, he proves that $\mathbb{Z}$-actions on compact metrizable finite dimensional spaces are free if and only if they have finite Rokhlin dimension. In particular, this answers \autoref{pr:FinDimNucCrossProd} for such systems.
Hence, known results concerning Rokhlin dimension cannot be used to bound nuclear dimension in the non-free case.

The next remarkable advance was made by Hirshberg and Wu, who proved that for any \LCH dynamical system $(X,h)$, the associated crossed product $C^*$-algebra has nuclear dimension bounded by a polynomial in $\dimnuc(C_0(X))$ \cite{HW}. This answers \autoref{pr:FinDimNucCrossProd} in case $G=\mathbb{Z}$ and $A$ is a commutative $C^*$-algebra.
This result of Hirshberg and Wu is the main motivation for this paper. 

We consider crossed products associated to partial dynamical systems, and primarily to partial automorphisms. A partial automorphism of a topological space $X$ is a triple $\Theta=(\theta,U,V)$
consisting of two open subsets $U$ and $V$ in $X$, and a homeomorphism $\theta: U\to V$. 
Given such triple, one can associate a crossed product $C^*$-algebra, which coincides with the original construction when $U=V=X$.
The theory of partial actions was developed by Exel (see \cite{Exel}), and gives new examples of crossed products that cannot be constructed using global systems. For example, Exel shows in \cite{Ex} that every AF algebra is a crossed product by a partial automorphism acting on a zero dimensional space. However, $K$-theory obstructions imply that unital AF algebras are never crossed products by global automorphisms. Also the Jiang-Su algebra, which has a central role in the classification program (see \cite{RorWin, CETWW}), cannot be viewed as a crossed product by a global automorphism, due to similar $K$-theory obstructions. It is shown in \cite[Remarks~2.6]{DPS18} that the Jiang-Su algebra is a crossed product by a partial automorphism on a zero-dimensional space.

The goal of this paper is to find conditions on topological partial actions, under which the crossed product has finite nuclear dimension. We organize the content as follows. 

We collect some preliminaries in \autoref{s:Pre}.

In \autoref{s:Glob} we deal with partial systems that globalize to honest actions and use the properties that are preserved between the two systems in order to bound the nuclear dimension of the associated crossed products. As an application, we consider partial automorphisms that are a direct limit of globalizable ones. We show that partial automorphisms on zero-dimensional spaces are of this form.

\autoref{s:FinGrp} deals with partial actions of finite groups.

\autoref{s:SupDom} deals with partial automorphisms that have uniformly bounded orbits. A global action has this property if and only if it is cyclic. 

\autoref{s:DenDom} contains a technical result which reduces the problem to the case of partial automorphisms with dense domains.

In \autoref{s:MinParAut} we bound the nuclear dimension (and more precisely, the decomposition rank) of crossed products associated to minimal partial homeomorphisms that can be extended to the whole space. We then use methods developed in \autoref{s:DenDom} to extend this result to general minimal partial automorphisms. We conclude that $C(X)\rtimes_\theta\mathbb{Z}$ is classifiable whenever $\theta$ is a minimal partial automorphism acting on a finite dimensional infinite compact Hausdorff second countable space $X$.

In \autoref{s:ZeroDimBdrsSubR} we again use ideas from \autoref{s:DenDom} in order to deal with partial automorphisms acting on $1$-dimensional spaces with the property that every closed subset has a zero-dimensional boundary (e.g. $1$-dimensional manifolds).

We believe that the results in this paper should hold for partial automorphisms acting on finite dimensional \LCH spaces, with no further restrictions. We do not know whether the technique of decomposing a partial automorphism into tractable pieces and then resort to known results about global systems will prove useful in higher dimensions. Alternatively, one could try to directly mimic the approach of Hirshberg and Wu, after developing an appropriate Rokhlin theory for partial automorphisms (see \cite{AGG2} for the concept of Rokhlin dimension for finite group partial actions).

\begin{center}
	\Large{A}\normalsize{CKNOWLEDGEMENTS}
\end{center}
The author is grateful to her supervisors Prof. Ilan Hirshberg, who suggested to her this interesting problem, and Prof. Wilhelm Winter, for their generous support and helpful advice. 
The author is also grateful to the referee for carefully reading the manuscript, and for giving many helpful comments.

\section{Preliminaries}\label{s:Pre}
We start by recalling definitions and basic properties of topological and $C^*$-algebraic partial actions of discrete groups. For more details, we refer to \cite{Exel}.

\begin{Def}\label{d:AlgPar}
	A \textit{$C^*$-algebraic partial action} $\theta$ of a discrete group $G$ on a $C^*$-algebra $A$ is a pair $\theta=(\{D_g\}_{g\in G}, \{\theta_g\}_{g\in G})$, consisting of ideals $\{D_g\}_{g\in G}$ in $A$ and $^*$-isomorphisms $\{\theta_g: D_{g^{-1}}\to D_g\}_{g\in G}$ such that:
	\begin{enumerate}
		\item $D_{1}=A$ and $\theta_{1}=\id_A$;
		\item $\theta_g\circ \theta_h\subseteq \theta_{gh}$ for all $g,h\in G$.
	\end{enumerate}
\end{Def}

\begin{Def}\label{d:TopPar}
	A \textit{topological partial action} $\theta$ of a discrete group $G$ on a topological space $X$ is a pair $\theta=(\{D_g\}_{g\in G}, \{\theta_g\}_{g\in G})$, consisting of open sets $\{D_g\}_{g\in G}$ in $X$ and homeomorphisms $\{\theta_g: D_{g^{-1}}\to D_g\}_{g\in G}$ such that:
	\begin{enumerate}
		\item $D_{1}=X$ and $\theta_{1}=\id_X$;
		\item $\theta_g\circ \theta_h\subseteq \theta_{gh}$ for all $g,h\in G$.
	\end{enumerate}
\end{Def}

\begin{Obs}\label{o:DomRel}
	For a partial action $\theta=(\{D_g\}_{g\in G}, \{\theta_g\}_{g\in G})$, the equality
	
	$$\theta_g(D_h\cap D_{g^{-1}})=D_{gh}\cap D_g,$$
	holds for all $g,h\in G$.
\end{Obs}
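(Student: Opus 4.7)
The plan is to establish the ``$\subseteq$'' inclusion first and derive ``$\supseteq$'' from it by a symmetry substitution, rather than trying to unwind both directions independently from the composition axiom.

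As a preparatory step, I would record the standard consequence of axiom (2) that $\theta_g^{-1}=\theta_{g^{-1}}$ for every $g\in G$. Indeed, taking $h=g^{-1}$ in (2) yields $\theta_g\circ\theta_{g^{-1}}\subseteq\theta_1=\id$, and since $\theta_{g^{-1}}:D_g\to D_{g^{-1}}$ the composition $\theta_g\circ\theta_{g^{-1}}$ is already defined on all of $D_g$; hence it equals $\id_{D_g}$, and symmetrically $\theta_{g^{-1}}\circ\theta_g=\id_{D_{g^{-1}}}$. This will be used freely in what follows.

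For the forward inclusion, I would take an arbitrary $x\in D_h\cap D_{g^{-1}}$. That $\theta_g(x)\in D_g$ is automatic from the range of $\theta_g$, so the only content is $\theta_g(x)\in D_{gh}$. Set $z:=\theta_{h^{-1}}(x)\in D_{h^{-1}}$. Then $\theta_h(z)=x\in D_{g^{-1}}$, so $z$ lies in the domain of the composition $\theta_g\circ\theta_h$. Axiom (2) now forces $z\in D_{(gh)^{-1}}$ and
$$\theta_g(x)=\theta_g(\theta_h(z))=\theta_{gh}(z)\in D_{gh},$$
giving $\theta_g(D_h\cap D_{g^{-1}})\subseteq D_{gh}\cap D_g$.

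The reverse inclusion then follows formally by applying the forward inclusion just proved with $g$ replaced by $g^{-1}$ and $h$ replaced by $gh$, which yields
$$\theta_{g^{-1}}(D_{gh}\cap D_g)\subseteq D_{g^{-1}}\cap D_{g^{-1}(gh)}=D_{g^{-1}}\cap D_h.$$
Applying $\theta_g$ to both sides and invoking $\theta_g\circ\theta_{g^{-1}}=\id_{D_g}$ from the preparatory step gives $D_{gh}\cap D_g\subseteq \theta_g(D_h\cap D_{g^{-1}})$, completing the proof.

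There is no real obstacle here; the only conceptual point is spotting that the composition axiom is asymmetric in a way that the statement is not, and that the cleanest way to restore symmetry is via the substitution $(g,h)\mapsto(g^{-1},gh)$. The remainder is bookkeeping with the definitions.
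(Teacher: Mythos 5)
Your proof is correct. The paper records this statement as an Observation without giving any proof (it is a standard fact in Exel's theory of partial actions), so there is no argument in the paper to compare against; your route --- first deducing $\theta_{g^{-1}}=\theta_g^{-1}$ from axiom (2) with $h=g^{-1}$, then using the composition axiom on $z=\theta_{h^{-1}}(x)$ to get $\theta_g(D_h\cap D_{g^{-1}})\subseteq D_{gh}\cap D_g$, and finally obtaining the reverse inclusion by the substitution $(g,h)\mapsto(g^{-1},gh)$ --- is exactly the standard verification, with all domain bookkeeping handled correctly.
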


The contravariant functor between \LCH spaces, with proper continuous maps and commutative $C^*$-algebras with non-degenerate $^*$-homomorphi-\\sms:
$$X\rightsquigarrow C_0(X)$$
gives a natural bijection between topological partial actions on \LCH spaces, and $C^*$-algebraic partial actions on commutative $C^*$-algebras. Therefore, we will sometimes abuse notation and write $\theta$ both for the partial action on $X$ and the induced partial action on $C_0(X)$.\\

\textit{Partial automorphisms }(also called \textit{semi-saturated partial actions of $\mathbb{Z}$}) form an important family of partial actions of $\mathbb{Z}$. We recall the definition:

\begin{Def}\label{d:ParAut}
	Let $X$ be a \LCH space and let $U,V$ be open subsets of $X$. Assume that $\theta: U\to V$ is a given homeomorphism. Then $\theta$ induces a partial action of $\mathbb{Z}$ on $X$, defined by $D_n\coloneqq\mathrm{Dom}(\theta^{-n})$ and $\theta_n\coloneqq\theta^n$, for every $n\in \mathbb{Z}$.
\end{Def}

\begin{Rmk}\label{r:IncDom}
	Let $\theta$ be a partial automorphism. It is easy to see that
	$$\ldots\subseteq D_2\subseteq D_1\subseteq D_0=X,$$
	and
	$$\ldots\subseteq D_{-2}\subseteq D_{-1}\subseteq D_0=X.$$
\end{Rmk}

\begin{Def}\label{d:InvSetMinParAct}
	Let $\theta=(\{D_g\}_{g\in G}, \{\theta_g\}_{g\in G})$ be a partial action of $G$ on a topological space $X$. We say that $Y\subseteq X$ is a \textit{$\theta$-invariant subset} if
	\[\theta_g(Y\cap D_{g^{-1}})\subseteq Y, \text{ for all } g\in G. \]
	
	A $\theta$-invariant subset $Y$ gives rise to a partial actions of $G$ on $Y$, by intersecting all domains with $Y$, and restricting the maps to the new domains.\\
	Note that if $\theta$ is a $\mathbb{Z}$-partial action generated by a partial automorphism, then $Y\subseteq X$ is $\theta$-invariant if and only if $\theta(Y\cap D_{-1})\subseteq Y$ and $\theta^{-1}(Y\cap D_1)\subseteq Y$.\\
	We say that $\theta$ is \textit{minimal} if $X$ and $\emptyset$ are the only open (equivalently, closed) $\theta$-invariant subsets in $X$.
\end{Def}

We recall the construction of a\textit{ partial crossed product}:\\
Let $\theta=(\{\theta_g\}_{g\in G}, \{D_g\}_{g\in G})$ be a partial action of a discrete group $G$ on a $C^*$-algebra $A$. Define the \textit{algebraic partial crossed product}, $A\rtimes_{\theta,alg}G$, to be finitely supported functions from $G$ to $A$, i.e. $\sum_{g\in G} a_g \delta_g$, with finitely many non-zero coefficients $a_g$, and such that $a_g\in D_g$ for all $g\in G$.\\
Addition and scalar multiplication are defined as usual.\\
Multiplication and involution are determined by:\\
$$(a\delta_g)(b\delta_h) \coloneqq \theta_g(\theta_{g^{-1}}(a)b)\delta_{gh} \ \text{ and } (a\delta_g)^*\coloneqq\theta_{g^{-1}}(a^*)\delta_{g^{-1}},$$
for all $a\in D_g$ and $b\in D_h$. One checks that it is well-defined.
Extend linearly to obtain an associative $^*$-algebra, $A\rtimes_{\theta,alg}G$,  see \cite{DokuchaevExel}.
Finally, the \textit{partial crossed product} $A\rtimes_{\theta}G$ is the $C^*$-algebra obtained by completing $A\rtimes_{\theta,alg}G$ with respect to the semi-norm (which turns out to be a norm): 
$$||x||_{\max}\coloneqq \sup\{ ||\pi(x)|| : \pi \ \mathrm{is\  a}\  ^*\mathrm{-representation \ of \ } A\rtimes_{\theta,alg}G \}.$$

We continue by recalling the main definitions and results concerning to enveloping actions. For further discussion, we refer to \cite{Abadie}.

\begin{Def} \label{d:TopGlob}
	A topological partial action $\theta$ of $G$ on $X$ is called \textit{globalizable}, if there exist a topological space $Y$ and an action $\alpha$ of $G$ on $Y$ such that:
	\begin{enumerate}
		\item $X\subseteq Y$ is an open subset;
		\item $Y=\bigcup_{g\in G}\alpha_g(X)$;
		\item $D_g=\alpha_g(X)\cap X$;
		\item $\theta_g(x)=\alpha_g(x)$ for all $x\in D_{g^{-1}}$.
	\end{enumerate} 
\end{Def}	

\begin{Rmk}\label{r:PropTopGolob}
	\mbox{}
	\begin{enumerate}
		
		\item Topological partial actions are always uniquely (up to a strong notion of equivalence) globalizable.
		\item If $X$ is locally compact (resp. compact, resp. second countable) then $Y$ is locally compact (resp. compact, resp. second countable). This follows from the construction of $Y$ as an open quotient of the product space $X\times G$.
		\item $Y$ may not be Hausdorff, even if $X$ is. A Hausdorff globalization exists if and only if the graph, $\mathrm{Graph}(\theta)\coloneqq \{(\theta_g(x),g,x): x\in D_{g^{-1}}\}$, is a closed subset of $X\times G\times X$.
		\item Let $X$ be a Hausdorff space. If the domains $D_g$ are clopen in $X$, then the graph is closed. 
		The converse direction holds whenever $X$ is compact.
	\end{enumerate}
\end{Rmk}

\begin{Def}\label{d:AlgGlob}
	A $C^*$-algebraic partial action $\theta$ of $G$ on $A$ is called \textit{globalizable}, if there exist a $C^*$-algebra $B$ and an action $\alpha$ of $G$ on $B$, such that:
	\begin{enumerate}
		\item $A$ is an ideal in $B$;
		\item $B=\overline{\sum_{g\in G} \alpha_g(A)}$;
		\item $D_g=\alpha_g(A)\cap A$;
		\item $\theta_g(x)=\alpha_g(x)$ for all $x\in D_{g^{-1}}$.
	\end{enumerate}
\end{Def}

\begin{Rmk}\label{r:PropAlgGlob}
	\begin{enumerate}
		\mbox{}
		\item Globalizations do not always exist in the $C^*$-context.
		\item If $A$ is unital then a globalization exists if and only if the domains $D_g$ are all unital.
		\item For a \LCH space $X$, and $C^*$-algebraic partial action on $C_0(X)$, a globalization exists if and only if a Hausdorff globalization exists for the corresponding topological partial action on $X$.
	\end{enumerate}
\end{Rmk}

\begin{Thm}(\cite[Theorem~4.18]{Abadie}) \label{t:MorEqGlob}
	Let $\theta$ be a globalizable partial action of a discrete group $G$ on a $C^*$-algebra $A$. Denote the globalization by $\alpha$, acting on $B$.\\ Then $A\rtimes_{\theta}G$ is a full-hereditary $C^*$-subalgebra inside $B\rtimes_{\alpha}G$.
\end{Thm}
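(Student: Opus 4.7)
The plan is to construct a canonical $*$-homomorphism $\iota: A\rtimes_\theta G \to B\rtimes_\alpha G$ sending $a\delta_g \mapsto a\delta_g$ (where on the right we identify $\delta_g$ with the canonical unitary multiplier implementing $\alpha_g$), and then verify in turn that this map is well-defined, isometric, has hereditary image, and has full image. First I would work at the algebraic level: for $a\in D_g\subseteq A\subseteq B$ the element $a\delta_g$ lives in $B\rtimes_{\alpha,\mathrm{alg}}G$, and the multiplication and involution of the partial crossed product agree with those of the global one because $\theta_g = \alpha_g|_{D_{g^{-1}}}$, so $\iota$ is a $*$-homomorphism of $*$-algebras.

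Next I would extend $\iota$ to the full $C^*$-completions and prove it is isometric. The easy direction, $\|\iota(x)\|\le\|x\|_{\max}$, follows from the universal property of $A\rtimes_\theta G$ once one checks that any covariant representation of $(B,\alpha,G)$ restricts to a covariant representation of the partial system $(A,\theta,G)$. The reverse inequality (injectivity) is the main technical obstacle and requires showing that every covariant representation $(\sigma,v)$ of the partial system on a Hilbert space $H$ extends to a covariant representation $(\tilde\sigma,v)$ of the globalization. I would do this by setting $\tilde\sigma(\alpha_g(a)) \coloneqq v_g\sigma(a)v_g^*$ for $a\in A$ and $g\in G$, checking consistency on intersections $\alpha_g(A)\cap \alpha_h(A)$ (this is where the relations $D_g=\alpha_g(A)\cap A$ and the covariance relation $v_g\sigma(a)v_g^*=\sigma(\theta_g(a))$ on $D_g$ get used), and then extending by linearity and continuity to $B=\overline{\sum_g \alpha_g(A)}$. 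This gives a covariant representation of $(B,\alpha,G)$ whose integrated form restricts on $A\rtimes_{\theta,\mathrm{alg}}G$ to the integrated form of $(\sigma,v)$, which yields $\|x\|_{\max}\le \|\iota(x)\|_{\max}$.

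For the hereditary property I would compute directly: for $a\in D_g$, $a'\in D_k$, $b\in B$,
\[
(a\delta_g)(b\delta_h)(a'\delta_k) \;=\; a\,\alpha_g(b)\,\alpha_{gh}(a')\,\delta_{ghk}.
\]
The first factor $a\alpha_g(b)$ lies in $A$ because $A$ is an ideal in $B$, while $\alpha_{gh}(a')\in \alpha_{ghk}(A)$ because $a'\in D_k\subseteq\alpha_k(A)$. Hence the product lies in $A\cap\alpha_{ghk}(A)=D_{ghk}$, so the whole expression belongs to $\iota(A\rtimes_\theta G)$. Passing to linear combinations and norm-closures, this shows $\iota(A\rtimes_\theta G)\cdot(B\rtimes_\alpha G)\cdot\iota(A\rtimes_\theta G)\subseteq \iota(A\rtimes_\theta G)$, which is exactly the hereditary property.

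Finally, for fullness, inside $B\rtimes_\alpha G$ we have $\delta_g\, a\, \delta_g^{*}=\alpha_g(a)$ for every $a\in A$ and $g\in G$, so the closed two-sided ideal generated by $\iota(A)$ contains $\alpha_g(A)$ for all $g$, hence contains $B=\overline{\sum_{g\in G}\alpha_g(A)}$, hence contains all $b\delta_h=b\cdot\delta_h$ with $b\in B$, $h\in G$, and is therefore all of $B\rtimes_\alpha G$. The principal difficulty throughout is the extension-of-representations step in the second paragraph; the remaining steps are essentially bookkeeping with the identities $D_g=\alpha_g(A)\cap A$ and the ideal property of $A$ in $B$.
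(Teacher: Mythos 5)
This theorem is quoted from Abadie (\cite[Theorem~4.18]{Abadie}); the paper gives no proof of its own, so your argument can only be judged on its merits. The overall architecture is the right one, and several pieces are correct as written: the algebraic embedding $a\delta_g\mapsto a\delta_g$ is a $*$-homomorphism, the contractivity $\|\iota(x)\|\le\|x\|_{\max}$ follows from the universal property, the hereditary computation $(a\delta_g)(b\delta_h)(a'\delta_k)=a\,\alpha_g(b)\,\alpha_{gh}(a')\,\delta_{ghk}$ with coefficient in $A\cap\alpha_{ghk}(A)=D_{ghk}$ is fine (and the image is automatically closed, so $CDC\subseteq C$ does give hereditarity), and the fullness argument via the unitary multipliers $\delta_g$ is fine.

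The genuine gap is exactly at the step you yourself flag as the main obstacle: injectivity via extending a covariant representation $(\sigma,v)$ of the partial system to a covariant representation of $(B,\alpha,G)$ \emph{on the same Hilbert space with the same $v$}. In a covariant representation of a partial action the $v_g$ are only partial isometries, with source and range projections supported on $\overline{\sigma(D_{g^{-1}})H}$ and $\overline{\sigma(D_g)H}$, while covariance for the global action requires honest unitaries; keeping $v$ cannot work, and in general no extension on the same space $H$ exists at all. Concretely, let $G=\mathbb{Z}/2\mathbb{Z}$ act partially on $A=\mathbb{C}$ with $D_g=\{0\}$ for the nontrivial $g$; the globalization is the flip on $B=\mathbb{C}\oplus\mathbb{C}$, with $B\rtimes_\alpha G\cong M_2(\mathbb{C})$. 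A covariant representation of the partial system is $\sigma=\mathrm{id}$ on $H=\mathbb{C}$, $v_g=0$; any covariant pair $(\tilde\sigma,u)$ for the global system on the same $H$ extending $\sigma$ would force $\tilde\sigma(0\oplus 1)=u\,\tilde\sigma(1\oplus 0)\,u^*=1$ while $\tilde\sigma(1\oplus 0)\tilde\sigma(0\oplus 1)=\tilde\sigma(0)=0$, a contradiction. So the correct statement requires a dilation to a larger Hilbert space (inducing the representation up, e.g.\ along the module $\overline{\iota(A\rtimes_\theta G)\cdot(B\rtimes_\alpha G)}$ or an $\ell^2(G)$-type construction), together with the verification that compressing the induced representation back recovers the integrated form of $(\sigma,v)$ on $\iota(A\rtimes_{\theta,\mathrm{alg}}G)$; this dilation is the real technical content of Abadie's theorem and is absent from your sketch. (A secondary issue: well-definedness of $\tilde\sigma$ on $\sum_{g}\alpha_g(A)$ needs more than pairwise consistency on $\alpha_g(A)\cap\alpha_h(A)$ --- you must show that $\sum_i\alpha_{g_i}(a_i)=0$ forces $\sum_i v_{g_i}\sigma(a_i)v_{g_i}^*=0$.) Without a valid injectivity argument your computation only shows that the \emph{image} of $A\rtimes_\theta G$, a priori a quotient of it, is a full hereditary subalgebra; an alternative repair is to deduce injectivity from Exel's Fell-bundle machinery (the bundle of $\theta$ sits as a hereditary sub-bundle of that of $\alpha$), in the spirit of the Proposition~21.7 argument the paper uses in \autoref{lma:DirectLimitPar}.
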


As in the case of honest actions, invariant open sets of a topological system give rise to closed two-sided ideals in the crossed product algebra.

\begin{Prop}\cite[Theorem~29.9]{Exel} \label{p:InvSetS.e.s}
	Let $\theta$ be a partial action of a discrete group $G$ on a  \LCH space $X$.
	Let $U$ be an open $\theta$-invariant subset of $X$. Then $$0\to C_0(U)\rtimes_{\theta|_{U}} G\to C_0(X)\rtimes_{\theta}G\to C_0(X\setminus U)\rtimes_{\theta|_{X\setminus U}}G\to 0$$ is a short exact sequence of $C^*$-algebras.
\end{Prop}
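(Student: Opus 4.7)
Since this proposition is cited verbatim as Theorem~29.9 of \cite{Exel}, I would not reprove it from first principles; rather, I would sketch how the result falls out of the standard framework for partial crossed products. The plan has three stages.

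First, constructing the two maps. The $\theta$-invariance of the open set $U$ forces $C_0(U)$ to be a $\theta$-invariant ideal of $C_0(X)$, and by \autoref{o:DomRel} the restricted action $\theta|_U$ has $g$-domain $C_0(U\cap D_g)=C_0(U)\cap D_g$; dually, $X\setminus U$ is closed and invariant, so $C_0(X\setminus U)\cong C_0(X)/C_0(U)$ carries a quotient partial action $\theta|_{X\setminus U}$ whose $g$-domain is $C_0((X\setminus U)\cap D_g)=C_0(D_g)/C_0(D_g\cap U)$. On the algebraic crossed products the inclusion and the quotient give two $*$-homomorphisms $\iota$ and $\pi$ by sending $a\delta_g\mapsto a\delta_g$; that they respect product and involution is a direct calculation using the fact that the domains match. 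Any $*$-representation of the source pulls back through either map to a $*$-representation of the target, so the universal semi-norms are dominated and $\iota,\pi$ extend to the $C^*$-completions.

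Second, exactness at the endpoints. Surjectivity of $\pi$ comes from the surjectivity of each $C_0(D_g)\twoheadrightarrow C_0(D_g\cap(X\setminus U))$, together with density of the algebraic partial crossed product. For the injectivity of $\iota$ I would use the standard fact that $A\hookrightarrow A\rtimes_\theta G$ is isometric for any partial action: starting from a faithful representation $(\rho,u)$ of $C_0(U)\rtimes_{\theta|_U}G$ one produces a covariant pair on $(C_0(X),G,\theta)$ extending it (via induction from the associated Fell bundle, or via the enveloping-action picture once one checks that faithful representations of $C_0(U)$ extend faithfully along the ideal inclusion into $C_0(X)$), and this forces $\iota$ to be injective.

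Third, exactness in the middle. The inclusion $\mathrm{image}(\iota)\subseteq\ker(\pi)$ is clear on generators. For the reverse, I would use the canonical generalised conditional expectations $E_g\colon C_0(X)\rtimes_\theta G\to C_0(D_g)$, which on algebraic elements $\sum_h a_h\delta_h$ return $a_g$ and which intertwine $\pi$ with the analogous maps $E_g^{X\setminus U}$ on the quotient crossed product. An element $x\in\ker(\pi)$ therefore satisfies $E_g(x)\in\ker(C_0(D_g)\to C_0(D_g\cap(X\setminus U)))=C_0(D_g\cap U)$ for every $g$, so each coefficient already lives in the relevant domain of $\theta|_U$; an approximation of $x$ by algebraic truncations built from the $E_g(x)$ (multiplying through by an approximate unit for $C_0(U)\rtimes_{\theta|_U}G$) then places $x$ in $\mathrm{image}(\iota)$. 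The main obstacle is making the coefficient-reconstruction argument rigorous on the full $C^*$-completion, since elements there need not be norm-limits of their Fourier partial sums; the cleanest way around this—and the route Exel takes—is to recast both algebras as cross-sectional $C^*$-algebras of Fell bundles, where the exactness machinery and the needed faithfulness of the diagonal conditional expectation are available ``for free''.
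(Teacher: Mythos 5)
The paper itself offers no argument for \autoref{p:InvSetS.e.s}: it is quoted with the citation to Exel's Theorem~29.9, so your sketch has to be measured against the standard proof. Your first two stages are fine in outline: the two $^*$-homomorphisms exist on the algebraic crossed products and extend to the completions by universality, surjectivity of $\pi$ is clear, and injectivity of $\iota$ is exactly the nontrivial input, which is what the Fell-bundle inclusion machinery (Exel's Proposition~21.7, already invoked in \autoref{lma:DirectLimitPar}) or a representation-extension argument is for.

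The genuine gap is in your middle-exactness step. The crossed product used here is the full (universal) one, and on the full completion the coefficient maps $E_g$ are only defined by first passing through the regular representation; they are not faithful there, and an element of the full algebra is neither determined by nor approximable from its Fourier coefficients. So from $E_g(x)\in C_0(D_g\cap U)$ for all $g$ you cannot conclude that $x$ lies in the image of $\iota$. Your proposed repair also misplaces the key fact: faithfulness of the diagonal conditional expectation is a feature of the \emph{reduced} cross-sectional algebra, not the full one, and an argument resting on a faithful expectation is really the reduced-crossed-product argument---which cannot be the mechanism behind the statement, since exactness in the middle can fail for reduced partial crossed products (this is precisely the failure of exactness for non-exact groups), whereas the proposition holds for every discrete $G$ because the crossed products are full. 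The correct route, and in substance Exel's, avoids expectations entirely: using invariance and \autoref{o:DomRel}, check that $J\coloneqq\iota\big(C_0(U)\rtimes_{\theta|_U}G\big)$ is a closed two-sided ideal of $C_0(X)\rtimes_{\theta}G$; observe that the algebraic quotient $\big(C_0(X)\rtimes_{\theta,\mathrm{alg}}G\big)/\big(C_0(U)\rtimes_{\theta,\mathrm{alg}}G\big)$ is canonically $^*$-isomorphic to $C_0(X\setminus U)\rtimes_{\theta,\mathrm{alg}}G$ (the kernel of $C_0(D_g)\to C_0(D_g\cap(X\setminus U))$ is $C_0(D_g\cap U)$); and then use the universal property of $C_0(X\setminus U)\rtimes G$ to produce $\psi\colon C_0(X\setminus U)\rtimes G\to \big(C_0(X)\rtimes_{\theta}G\big)/J$ with $\psi\circ\pi$ equal to the quotient map, which immediately yields $\ker\pi\subseteq J$.
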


We review some well-known facts related to dimension theory.

\begin{Thm} \cite[Chapter~3, Proposition~6.4]{Pears} \label{t:DimSub}
	Let $Y$ be any subspace of a \LCH second countable space $X$. Then $\dim(Y)\leq \dim(X)$.
\end{Thm}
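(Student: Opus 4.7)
The plan is to reduce the statement to a classical result about separable metric spaces. A locally compact Hausdorff second countable space is regular and second countable, hence metrizable by the Urysohn metrization theorem, and it is automatically separable. Consequently $Y$ is also separable metrizable, so both $X$ and $Y$ lie in the category of separable metric spaces, where the covering dimension $\dim$, the large inductive dimension $\mathrm{Ind}$, and the small inductive dimension $\mathrm{ind}$ all coincide (the Menger--Urysohn theorem). This identification is the conceptual pivot of the proof, because the inductive dimensions are manifestly hereditary in the sense we need, whereas covering dimension is not, in general.

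Granted this, it suffices to verify $\mathrm{ind}(Y)\leq\mathrm{ind}(X)$, which I would do by induction on $n=\dim(X)$ (the statement is vacuous if $\dim(X)=\infty$). The base case $n=-1$ forces $X=\emptyset$, so $Y=\emptyset$ and the conclusion is trivial. For the inductive step, fix $y\in Y$ and an open neighborhood $W$ of $y$ in $Y$; write $W=\widetilde W\cap Y$ with $\widetilde W$ open in $X$. By definition of $\mathrm{ind}(X)\leq n$, there exists an open $V\subseteq\widetilde W$ in $X$ containing $y$ with $\mathrm{ind}(\partial_X V)\leq n-1$. Setting $V'=V\cap Y$ gives an open neighborhood of $y$ in $Y$ contained in $W$, and a direct check yields $\partial_Y V'\subseteq \partial_X V\cap Y$. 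Since $\partial_X V\cap Y$ is a subspace of the separable metric space $\partial_X V$, whose inductive dimension is at most $n-1$, the inductive hypothesis (applied one dimension lower, to the pair $\partial_X V\cap Y\subseteq \partial_X V$) gives $\mathrm{ind}(\partial_Y V')\leq n-1$. This establishes $\mathrm{ind}(Y)\leq n$, as desired.

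The main obstacle I expect is justifying the equivalence $\dim=\mathrm{ind}$ in the separable metric setting, since the subspace theorem for covering dimension really does fail for general topological spaces and in fact fails even for some non-metrizable compact Hausdorff spaces. Fortunately this equivalence is the content of the Menger--Urysohn theorem (treated in Pears alongside the cited proposition), so appealing to it is legitimate. An alternative that stays within the language of covering dimension would be to take a finite open cover $\{U_i\}$ of $Y$, extend each $U_i$ to $\widetilde U_i$ open in $X$, observe that $\bigcup\widetilde U_i$ is an open neighborhood of $Y$, and then use paracompactness of $X$ (which follows from LCH plus second countable) together with a shrinking/partition-of-unity argument to produce a refinement of order at most $\dim(X)+1$ whose restriction to $Y$ refines $\{U_i\}$; the delicate step there is controlling the order of the refinement outside $Y$, which is what makes the inductive approach cleaner.
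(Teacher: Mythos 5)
Your argument is correct, but note that the paper does not prove this statement at all: it is quoted verbatim from Pears (Chapter~3, Proposition~6.4), where monotonicity of covering dimension under arbitrary subspaces is established directly within covering-dimension theory for a much wider class of spaces (totally/perfectly normal spaces), of which metrizable spaces are a special case. Your route instead exploits the standing hypotheses: locally compact Hausdorff plus second countable forces $X$ (hence every subspace $Y$, and every boundary $\partial_X V$ occurring in your induction) to be separable metrizable, so the coincidence theorem $\dim=\mathrm{ind}=\mathrm{Ind}$ applies and reduces everything to the hereditary behaviour of the small inductive dimension, which you verify by the standard induction; this is a perfectly legitimate and self-contained alternative, just less general than the cited result. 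Two small remarks: in the inductive step it is cleaner to apply the inductive hypothesis directly to the pair $\partial_Y V'\subseteq \partial_X V$ (as written, you apply it to $\partial_X V\cap Y\subseteq \partial_X V$ and then silently pass to the further subspace $\partial_Y V'$, which requires one more, equally available, application of the same hypothesis); and when $\dim(X)=\infty$ the statement is trivially true rather than vacuous. Neither point is a gap. Your fallback sketch via extending and shrinking covers is indeed the delicate route, and is closer in spirit to what Pears actually does, where the order of the refinement is controlled using the stronger normality hypotheses rather than metrizability.
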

%since LCH 2nd ctble is metrizable so totally normal
\begin{Thm} \cite[Chapter~3, Proposition~5.3]{Pears} \label{t:DimF-Sig}
	Let $X$ be a \LCH second countable space such that $X=\bigcup_{n\in\mathbb{N}} A_n$, where each $A_n$ is an $\mathrm{F}_\sigma$-subset. Then 
	$$\dim(X)=\max_n\{\dim(A_n)\}.$$
\end{Thm}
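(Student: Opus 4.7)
The plan is to prove the equality by showing both inequalities. The bound $\max_n \dim(A_n) \le \dim(X)$ is immediate from the subspace theorem (\autoref{t:DimSub}), since each $A_n$ is a subspace of $X$. So the content is in the reverse inequality.

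For the reverse inequality, set $d := \max_n \dim(A_n)$ and assume $d < \infty$, otherwise there is nothing to prove. Since each $A_n$ is $F_\sigma$, I would write $A_n = \bigcup_m F_{n,m}$ with $F_{n,m}$ closed in $X$. By \autoref{t:DimSub} applied inside $A_n$, each $F_{n,m}$ has $\dim(F_{n,m}) \le \dim(A_n) \le d$. After re-indexing $\{F_{n,m}\}$ as $\{F_k\}_{k\in\mathbb{N}}$, the problem reduces to the classical countable closed sum theorem: if $X = \bigcup_k F_k$ with each $F_k$ closed in $X$ and $\dim(F_k) \le d$, then $\dim(X) \le d$. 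Note that $X$, being LCH second countable, is metrizable by Urysohn's theorem, hence in particular normal and paracompact, which is the ambient hypothesis under which the sum theorem is usually proved.

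To prove the sum theorem itself, I would use the standard covering characterization of covering dimension: it is enough to show that every finite open cover $\mathcal{U} = \{U_1,\dots,U_r\}$ of $X$ admits an open refinement of order at most $d+1$. The strategy is to inductively construct a sequence of shrinkings $\mathcal{V}^{(k)} = \{V^{(k)}_1,\dots,V^{(k)}_r\}$ with $\overline{V^{(k)}_i} \subseteq U_i$, such that the restriction of $\mathcal{V}^{(k)}$ to $F_1 \cup \cdots \cup F_k$ has order at most $d+1$, and $\mathcal{V}^{(k)}$ coincides with $\mathcal{V}^{(k-1)}$ on an open neighborhood of $F_1 \cup \cdots \cup F_{k-1}$. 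Each inductive step invokes the hypothesis $\dim(F_k) \le d$ to refine on $F_k$, combined with a swelling argument in the normal space $X$ to extend the refinement to an open neighborhood without worsening the order.

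The main obstacle is the bookkeeping needed to guarantee that the inductively chosen $\mathcal{V}^{(k)}$ stabilize to a well-defined open cover of all of $X$ with order $\le d+1$: one must introduce enough ``padding'' at each stage (so that later modifications occur only outside previously stabilized neighborhoods of $F_1\cup\cdots\cup F_{k-1}$) while still producing an honest cover in the limit. This is exactly where the closedness of the $F_k$, rather than the weaker $F_\sigma$ property of the $A_n$, is crucial, and is the reason for first reducing to the closed case.
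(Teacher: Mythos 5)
This statement is quoted from Pears and not proved in the paper, so there is no internal argument to compare against; your outline is the standard proof and essentially the one in the cited source: the subspace theorem gives $\max_n\dim(A_n)\le\dim(X)$ and reduces the $\mathrm{F}_\sigma$ case to the countable closed sum theorem, which is then handled by the usual inductive shrinking/swelling construction in a normal (here metrizable) space. Two minor remarks: bounding $\dim(F_{n,m})\le\dim(A_n)$ uses the subspace theorem with the merely separable metrizable ambient space $A_n$, which is classical but slightly more general than the version quoted as \autoref{t:DimSub}; and the countable closed sum theorem is itself a standard quotable result (valid for all normal spaces), so the bookkeeping you flag in the stabilization step could simply be replaced by a citation rather than carried out by hand.
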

% again, every LCH 2nd ctble is totally normal and normal, so apply Winter's remarks

We recall the definition for nuclear dimension of $C^*$-algebras and permanence properties that will be useful for us. For further discussion, see \cite{Winter-Zacharias}.

\begin{Def}\label{d:DimNucDr}
	A $C^*$-algebra $A$ has \textit{nuclear dimension} at most $n$, $\dimnuc(A)\leq n$, if there exists a net $(F_{\lambda},\psi_{\lambda}, \varphi_{\lambda})_{\lambda\in \Lambda}$ consisting of: $F_{\lambda}$ finite dimensional $C^*$-algebras, and $\psi_{\lambda}:A\to F_{\lambda}$, $\varphi_{\lambda}: F_{\lambda}\to A$ completely positive maps, satisfying:  
	\begin{enumerate}
		\item $\varphi_{\lambda}\circ\psi_{\lambda}(a)\to a$ uniformly on finite subsets of $A$;
		\item $\psi_{\lambda}$ are contractive;
		\item $F_{\lambda}$ decomposes as $F_{\lambda}=F_{\lambda}^{(0)}\oplus\ldots\oplus F_{\lambda}^{(n)}$ such that the restriction of $\varphi_{\lambda}$ to each $F_{\lambda}^{(i)}$, $i\in \{0,1,\ldots,n\}$, is a contractive, order zero map.
	\end{enumerate}
	
	If one can moreover arrange that the maps $\varphi_{\lambda}$ are contractive, then we say that the \textit{decomposition rank} of $A$ is at most $n$, $\dr(A)\leq n$.
\end{Def}

We recall some permanence properties for decomposition rank and nuclear dimension. We refer the reader to \cite{KW, Winter-Zacharias}.

\begin{Prop}\label{p:PerPropDimNucDr}
	\mbox{}
	\begin{enumerate}
		\item Suppose $A=\varinjlim{A_n}$. Then \[\dimnuc(A)\leq \mathrm{lim inf}_n(\dimnuc(A_n)).\]
		\item Let $J$ be an ideal in $A$, then $$\dimnuc(A/J) \leq 
		\dimnuc(A)\leq \dimnuc(J)+\dimnuc(A/J)+1.$$
		Moreover, $\dr(A/J)\leq \dr(A)$.
		\item If $B$ is a hereditary $C^*$-subalgebra in $A$, then $$\dimnuc(B)\leq \dimnuc(A) \text{ and } \dr(B)\leq \dr(A).$$ 
		If $B$ is full-hereditary in $A$, then $$\dimnuc(B)=\dimnuc(A) \text{ and } \dr(B)=\dr(A).$$
		\item Let $X$ be a \LCH second countable space. Then
		$$\dimnuc(C_0(X))=\dr(C_0(X))=\dim(X).$$
	\end{enumerate}
	
\end{Prop}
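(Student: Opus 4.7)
The plan is to invoke established results from the literature, as already cited in \cite{Winter-Zacharias, KW}; all four statements are standard permanence properties whose proofs share a common template, namely starting with approximating systems $(F_\lambda,\psi_\lambda,\varphi_\lambda)$ witnessing the nuclear dimension (or decomposition rank) of one algebra and modifying them to produce a suitable system for the other. I would handle the four items in order and in increasing difficulty.

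For (1), given a finite set $F\subseteq A=\varinjlim A_n$ and $\eps>0$, first approximate each element of $F$ by elements coming from some $A_n$ (using that the algebraic direct limit is dense), then choose an approximating system realising $\dimnuc(A_n)\le k$ for a suitable $n$ with $\dimnuc(A_n)\le \liminf_m\dimnuc(A_m)$, and compose the upward map with the canonical $^*$-homomorphism $A_n\to A$; the colour count and the order-zero property are preserved under this composition. For (2), the bound $\dimnuc(A/J)\le\dimnuc(A)$ (and likewise for $\dr$) follows by lifting a finite subset from $A/J$ to $A$, applying an approximating system for $A$, and post-composing with the quotient map, using that composing an order-zero map with a $^*$-homomorphism remains order zero. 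The additive bound $\dimnuc(A)\le\dimnuc(J)+\dimnuc(A/J)+1$ is the more delicate piece: the standard approach uses a quasicentral approximate unit $(e_\lambda)$ for $J$ to decompose $a\in A$ into the pieces $e_\lambda^{1/2}ae_\lambda^{1/2}$ and $(1-e_\lambda)^{1/2}a(1-e_\lambda)^{1/2}$ (modulo small errors), approximating the first by systems from $J$ and the second by lifted systems from $A/J$; the extra colour precisely accommodates the interpolation between the two blocks.

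For (3), if $B$ is hereditary in $A$ and $(F_\lambda,\psi_\lambda,\varphi_\lambda)$ witnesses $\dimnuc(A)\le n$, then for a positive contraction $h\in B$ approximating a unit for a prescribed finite subset, the maps $\psi_\lambda|_B$ and $b\mapsto h^{1/2}\varphi_\lambda(\psi_\lambda(b))h^{1/2}$ form an approximating system for $B$; compressions of order-zero maps by positive elements remain order zero, so no colour is added. For full-hereditary subalgebras, one combines this with Brown's stable isomorphism theorem together with the fact (also from \cite{Winter-Zacharias}) that nuclear dimension and decomposition rank are preserved under tensoring with the compacts. Finally, (4) is \cite[Proposition~2.4]{Winter-Zacharias}: the upper bound comes from partitions of unity subordinate to open covers of order $\le\dim(X)+1$ on $X$, giving an order-zero decomposition directly; the lower bound is a spectral/Gelfand argument showing that any $(n+1)$-coloured approximation on $C_0(X)$ produces such an open cover of $X$.

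The main obstacle in this list is clearly the additive ideal bound in (2): making the two approximating systems compatible while paying only one extra colour requires the quasicentral approximate unit to be chosen so that the cutdowns by $e_\lambda^{1/2}$ and $(1-e_\lambda)^{1/2}$ are almost multiplicative on the relevant finite sets and so that the resulting coloured blocks remain approximately order zero. Since all four items are proved verbatim in the references, the proposition functions here primarily as a convenient summary of the tools that will be invoked repeatedly in Sections~\ref{s:Glob}--\ref{s:ZeroDimBdrsSubR}.
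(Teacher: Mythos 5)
Your proposal takes essentially the same route as the paper, which offers no proof at all but simply records these as known permanence properties with references to \cite{KW, Winter-Zacharias}; your sketches are precisely the standard arguments from those sources (Arveson extension for limits, post-composition with the quotient map, quasicentral approximate units for the extension bound, cut-downs for hereditary subalgebras, Brown's theorem plus stability for fullness, and the partition-of-unity argument for $C_0(X)$). The only caveat is that your claim that compressions of order-zero maps by positive elements remain order zero is not literally true as stated (the cited proofs handle this point more carefully, e.g.\ via the structure of order-zero maps), but since you ultimately defer to the references this does not affect the conclusion.
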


Finally, our motivation is to push further the following theorem, by Hirshberg and Wu, to the context of partial actions.

\begin{Thm}\cite[Theorem 5.1]{HW}  \label{t:HirWu}
	Let $X$ be a \LCH space and $\hat{\alpha}\in \mathrm{Homeo}(X)$. Denote by $\alpha: \mathbb{Z}\to \Aut(C_0(X))$ the generated action. Then
	$$\dimnuc(C_0(X)\rtimes_{\alpha}\mathbb{Z})\leq 2\dim(X)^2+6\dim(X)+4.$$
\end{Thm}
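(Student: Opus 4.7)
The plan is to reduce to the case of a compact metric space $X$ of dimension $d$, and then build completely positive approximations for $C(X)\rtimes_\alpha\mathbb{Z}$ using dynamical decompositions of $X$ that accommodate both free and periodic orbits. First I would reduce to the compact metric case via \autoref{p:PerPropDimNucDr}(1) applied to an increasing sequence of $\hat\alpha$-invariant compact sub-dynamical systems exhausting $X$ (using $\sigma$-compactness, and the fact that $\dim$ of a subspace is controlled by \autoref{t:DimSub}). Since, by Szab\'o's theorem cited in the introduction, $\mathbb{Z}$-actions on finite-dimensional compact metric spaces have finite Rokhlin dimension if and only if they are free, I cannot simply invoke the Rokhlin dimension machinery globally, and the central task is to handle periodic orbits directly.

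The main construction I would attempt is a finite ``coloured Rokhlin-type tower decomposition'' adapted to the local period structure of $\hat\alpha$. Given $\varepsilon>0$, the goal is to produce finitely many open sets $U_0,\dots,U_k$ covering $X$, each equipped with a partial tower of some height $n_i$ and supporting a subordinated partition of unity. For each piece, the tower structure lets me build a contractive c.p.\ map $\psi_i$ from $C(X)\rtimes\mathbb{Z}$ into a matrix algebra over (a quotient of) $C(U_i)$, together with an order-zero lift $\varphi_i$; the sum $\sum_i\varphi_i\circ\psi_i$ would be the required approximation. For long aperiodic orbits I would use tall towers arising from a Kakutani-Rokhlin marker set, while near $n$-periodic points I would use towers of height exactly $n$ built from small invariant neighbourhoods of $P_n\setminus P_{n-1}$, peeling off the periodic strata inductively using the short exact sequence of \autoref{p:InvSetS.e.s} together with \autoref{p:PerPropDimNucDr}(2).

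The number of colours needed for the tower decomposition should scale linearly in $d$ via a covering-dimension argument combined with \autoref{t:DimSub}, producing roughly $2d+2$ colours; each coloured piece then contributes $d+1$ further colours from the nuclear dimension of the base $C_0(U_i)$ via \autoref{p:PerPropDimNucDr}(4). Multiplying these and accounting for the boundary overlap between towers of different heights should yield a bound of the asserted form $2d^2+6d+4$. Across the exhausting sequence from the reduction step one then combines these estimates via \autoref{p:PerPropDimNucDr}(1).

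The main obstacle I anticipate is the uniform handling of orbits whose local periods vary wildly, in particular the accumulation of arbitrarily short periodic orbits on aperiodic ones; the marker construction must interpolate smoothly between short and tall towers. A related technical difficulty is controlling the boundaries between towers of different heights so that the order-zero structure of the $\varphi_i$ survives across the interfaces; this is what I expect drives the quadratic rather than linear dependence on $d$, and is where the bulk of the work would go.
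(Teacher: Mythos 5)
This theorem is not proved in the paper at all: it is quoted as a black box from \cite[Theorem~5.1]{HW}, so the only meaningful benchmark for your proposal is Hirshberg and Wu's original argument. Your outline does point in the same general direction as theirs (stratify $X$ by periodic behaviour, treat the bounded-period part as a subhomogeneous piece, run a Rokhlin-tower argument on the aperiodic part, and combine via the exact sequences of \autoref{p:InvSetS.e.s} and \autoref{p:PerPropDimNucDr}), but as written it is a plan rather than a proof: the two issues you yourself defer as ``where the bulk of the work would go'' --- interpolating between towers of wildly different heights near accumulations of short periodic orbits, and preserving the order-zero structure across tower interfaces --- are precisely the technical core of the Hirshberg--Wu paper (their relative Rokhlin-dimension machinery built from the sets of points of period at most $n$), and nothing in your sketch supplies them. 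The final colour count is also only numerology: $(2\dim(X)+2)$ colours times $(\dim(X)+1)$ gives $2\dim(X)^2+4\dim(X)+2$, not $2\dim(X)^2+6\dim(X)+4=2(\dim(X)+1)(\dim(X)+2)$, and ``accounting for the boundary overlap'' is exactly the step that would have to be proved.

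There is moreover a concrete error in your opening reduction. A \LCH space need not be exhausted by $\hat{\alpha}$-invariant compact subsets --- translation on $\mathbb{R}$ has no nonempty invariant compact subset at all --- so you cannot write $C_0(X)\rtimes_{\alpha}\mathbb{Z}$ as a direct limit over invariant compact subsystems and invoke \autoref{p:PerPropDimNucDr}(1). The correct reduction is via the one-point compactification: $\hat{\alpha}$ extends to $X^{+}$ fixing the point at infinity, $\dim(X^{+})=\dim(X)$, and $C_0(X)\rtimes\mathbb{Z}$ sits as an ideal, hence a hereditary subalgebra, of $C(X^{+})\rtimes\mathbb{Z}$, so \autoref{p:PerPropDimNucDr}(3) applies; this is exactly the trick used in \autoref{c:ResFinGrp}. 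With that repaired, your proposal is a reasonable road map of the known proof, but it does not constitute a proof of the stated bound.
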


\section{Globalizable partial actions}\label{s:Glob}

Recall the definition of shrinking space.

\begin{Def}\label{d:ShrinKSpace}
	A topological space is a \textit{shrinking space} if every open cover is shrinkable, namely, there is another open cover indexed by the same indexing set, with the property that the closure of each set lies inside the corresponding set in the original cover.
	We say that a space is \textit{countably shrinking} if every countable open cover is shrinkable.
\end{Def}

\begin{Prop}\label{p:DimGlobSp}
	Let $G$ be a countable discrete group, let $X$ be a \LCH second countable space, and let $\theta$ be a partial action of $G$ on $X$ that admits a Hausdorff globalization, $\alpha$ acting on $Y$. Then $\dim(Y)= \dim(X)$.
\end{Prop}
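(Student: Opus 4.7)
The plan is to prove two inequalities. The lower bound $\dim(X) \le \dim(Y)$ is essentially immediate: by \autoref{d:TopGlob}(1) we have $X \subseteq Y$ as a subspace, and I would invoke \autoref{t:DimSub} directly — this needs $Y$ to be \LCH second countable, which I would justify first using \autoref{r:PropTopGolob}(2) together with the fact that we are in the Hausdorff globalization case.

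For the reverse inequality $\dim(Y) \le \dim(X)$, the idea is to decompose $Y$ as a countable union of pieces each homeomorphic (in the subspace topology) to $X$, and apply \autoref{t:DimF-Sig}. Concretely, since $G$ is countable and $Y = \bigcup_{g\in G}\alpha_g(X)$ by \autoref{d:TopGlob}(2), it suffices to show each $\alpha_g(X)$ is an $\mathrm{F}_\sigma$ subset of $Y$ with $\dim(\alpha_g(X)) = \dim(X)$. The dimension equality is clear, since $\alpha_g\colon Y\to Y$ is a homeomorphism restricting to a homeomorphism $X \to \alpha_g(X)$, so \autoref{t:DimSub} (or rather its trivial homeomorphism-invariance form) gives $\dim(\alpha_g(X))=\dim(X)$.

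The only real point to check is that each $\alpha_g(X)$ is $\mathrm{F}_\sigma$ in $Y$. Since $X$ is open in $Y$ by \autoref{d:TopGlob}(1), its image $\alpha_g(X)$ is open in $Y$ as well. Open subspaces of \LCH second countable spaces are themselves \LCH second countable, hence $\sigma$-compact. Writing $\alpha_g(X) = \bigcup_n K_n^{(g)}$ with each $K_n^{(g)}$ compact, and using that $Y$ is Hausdorff so each $K_n^{(g)}$ is closed in $Y$, we conclude that $\alpha_g(X)$ is $\mathrm{F}_\sigma$ in $Y$.

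Putting everything together, \autoref{t:DimF-Sig} applied to the countable cover $\{K_n^{(g)}\}_{g\in G, n\in \mathbb{N}}$ yields
\[
\dim(Y) = \max_{g,n}\dim(K_n^{(g)}) \le \max_g \dim(\alpha_g(X)) = \dim(X),
\]
which combined with the first inequality gives equality. I do not anticipate a genuine obstacle here; the slightly delicate step is simply making sure that the Hausdorff hypothesis on the globalization is used to invoke both second countability and local compactness of $Y$, so that the $\mathrm{F}_\sigma$ and subspace dimension tools from \autoref{s:Pre} are applicable.
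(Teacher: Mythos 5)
Your argument is correct, and it reaches the conclusion by a route that is genuinely different from (and somewhat more elementary than) the one in the paper. The paper also starts from the countable open cover $Y=\bigcup_{g\in G}\alpha_g(X)$, but it then invokes Dowker's theorem -- a normal space is countably shrinking if and only if it is countably paracompact -- to shrink this cover to one whose members have closures contained in the corresponding $\alpha_g(X)$; these closed sets are then fed into \autoref{t:DimF-Sig}, with \autoref{t:DimSub} controlling their dimension. You bypass the shrinking step entirely by observing that each $\alpha_g(X)$ is open in the locally compact Hausdorff second countable space $Y$, hence itself $\sigma$-compact, hence an $\mathrm{F}_\sigma$ subset of $Y$ (one could even shortcut this further: open subsets of a metrizable space are automatically $\mathrm{F}_\sigma$, a fact the paper itself uses later in the proof of \autoref{l:DrExtSH}), so \autoref{t:DimF-Sig} applies directly to the pieces $K_n^{(g)}$, and \autoref{t:DimSub} gives the bound $\dim(K_n^{(g)})\leq\dim(\alpha_g(X))=\dim(X)$. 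Both proofs rest on the same two dimension-theoretic inputs and on \autoref{r:PropTopGolob}(2) to know $Y$ is locally compact Hausdorff and second countable; the paper's Dowker argument would also work in the broader setting of normal countably paracompact spaces, whereas your version exploits metrizability/$\sigma$-compactness, which is all that is needed here and keeps the argument self-contained relative to the tools quoted in \autoref{s:Pre}.
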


\begin{proof}
	By \autoref{r:PropTopGolob}, we know that $Y$ is a \LCH second countable space.
	Moreover, $Y=\bigcup_{g\in G} \alpha_g(X)$, so $Y$ is a countable union of open subsets, each of them homeomorphic to $X$.
	Dowker's Theorem \cite[Theorem~2]{Dowker} implies that a normal space is countably shrinking if and only if it is countably paracompact. As a metrizable space, $Y$ has these properties. Thus, the above cover is shrinkable and \autoref{t:DimF-Sig} implies that $Y$ has covering dimension equals to the covering dimension of $X$.
\end{proof}

\begin{Cor}\label{c:ResGlob}
	Let $G$, $\theta$, and $X$ be as in \autoref{p:DimGlobSp}.\\
	If $G=\mathbb{Z}$, then
	$$\dimnuc(C_0(X)\rtimes_{\theta}G)\leq 2\dim(X)^2+6\dim(X)+4.$$
	If $G$ is a finite cyclic group, then $$\dimnuc(C_0(X)\rtimes_{\theta}G)\leq \dr(C_0(X)\rtimes_{\theta}G) \leq \dim(X).$$
\end{Cor}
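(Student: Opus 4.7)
The plan is to leverage \autoref{t:MorEqGlob} to transport bounds from global crossed products to partial ones. Since $\theta$ admits a Hausdorff globalization $\alpha$ on some \LCH second countable space $Y$, \autoref{t:MorEqGlob} identifies $C_0(X)\rtimes_\theta G$ with a full hereditary $C^*$-subalgebra of $C_0(Y)\rtimes_\alpha G$. By \autoref{p:PerPropDimNucDr}(3) this preserves both nuclear dimension and decomposition rank, so
\[
\dimnuc(C_0(X)\rtimes_\theta G) = \dimnuc(C_0(Y)\rtimes_\alpha G), \quad \dr(C_0(X)\rtimes_\theta G) = \dr(C_0(Y)\rtimes_\alpha G).
\]
Moreover, \autoref{p:DimGlobSp} ensures $\dim(Y)=\dim(X)$, so any bound phrased in terms of $\dim(Y)$ automatically yields one in terms of $\dim(X)$. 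This reduces both cases to a statement about the global crossed product.

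For $G=\mathbb{Z}$, I would simply apply \autoref{t:HirWu} to the homeomorphism of $Y$ generating $\alpha$, which gives
\[
\dimnuc(C_0(Y)\rtimes_\alpha\mathbb{Z}) \le 2\dim(Y)^2 + 6\dim(Y) + 4 = 2\dim(X)^2 + 6\dim(X) + 4,
\]
finishing this case at once.

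For $G$ a finite cyclic group, the same reduction shows it suffices to establish the global bound $\dr(C_0(Y)\rtimes_\alpha G) \le \dim(Y)$ for any action of a finite cyclic group on a \LCH second countable space. I would appeal to this as a known dimension bound for such global systems: $C_0(Y)\rtimes_\alpha G$ is type~I and $|G|$-subhomogeneous, with primitive ideal space mapping continuously and finite-to-one onto the orbit space $Y/G$, and standard bounds on the decomposition rank of subhomogeneous $C^*$-algebras in terms of the covering dimension of the primitive ideal space give $\dr \le \dim(Y/G) \le \dim(Y)$. I expect this last input to be the only substantive obstacle to a fully self-contained argument: the globalization $\alpha$ can acquire fixed points even when $\theta$ is free (new fixed points can live in $Y\setminus X$), so one cannot simply invoke Morita equivalence with $C_0(Y/G)$ and must instead argue via the subhomogeneous structure (or, equivalently, via Takai duality identifying $C_0(Y)\rtimes_\alpha G$ with the fixed-point algebra of the dual $\widehat G$-action on $C_0(Y)\otimes M_{|G|}$). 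The globalization reduction itself is entirely formal given the tools already established in the excerpt.
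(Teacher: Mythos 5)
Your proposal is correct and follows essentially the same route as the paper: reduce to the global system via \autoref{t:MorEqGlob} and \autoref{p:PerPropDimNucDr}(3), use $\dim(Y)=\dim(X)$ from \autoref{p:DimGlobSp}, and then quote the Hirshberg--Wu bounds for global systems (the paper cites \cite[Corollary~5.4, Theorem~3.4]{HW}) for both the $\mathbb{Z}$ case and the finite cyclic case. The only caveat is your parenthetical sketch of the finite-group bound: a continuous finite-to-one map of $\Prim$ onto $Y/G$ does not by itself give $\dim(\Prim)\leq\dim(Y/G)$ (finite-to-one maps need not decrease covering dimension), but since you ultimately defer to the known global result, exactly as the paper does, the argument is fine.
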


\begin{proof}
	Both results follow from \cite[Corollary~5.4, Theorem~3.4]{HW} and\\ \autoref{p:DimGlobSp}, combined with \autoref{t:MorEqGlob} and \autoref{p:PerPropDimNucDr}(3).
\end{proof}

The following lemma will help us when considering direct limits of partial automorphisms.

\begin{lma}\label{lma:DirectLimitPar}
	Let $X$ be a locally compact Hausdorff space, and let $\theta: U\to V$ be a partial automorphism on $X$. Assume that there exists an increasing sequence $(U_k)_{k=1}^{\infty}$ of open subsets in $X$ such that $U=\bigcup_{k=1}^{\infty} U_k$.\\ Let $\theta^{(k)}: U_k\to \theta(U_k)$ be the restriction of $\theta$ to $U_k$. Then \[C(X)\rtimes_\theta \mathbb{Z}= \varinjlim_k C(X)\rtimes_{\theta^{(k)}}\mathbb{Z}.\]
\end{lma}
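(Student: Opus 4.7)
The plan is to show that the family $(C_0(X) \rtimes_{\theta^{(k)}} \mathbb{Z})_k$, together with natural connecting maps, forms an inductive system of $C^*$-subalgebras of $C_0(X) \rtimes_\theta \mathbb{Z}$ whose union is dense. First I will analyze the relationship between the domains. Denoting by $D^{(k)}_n$ the $n$-th domain of $\theta^{(k)}$ (so $D^{(k)}_{-1} = U_k$ and $D^{(k)}_1 = \theta(U_k)$), an easy induction on $|n|$ using \autoref{o:DomRel}, together with $U = \bigcup_k U_k$ and $V = \bigcup_k \theta(U_k)$, shows that $D^{(k)}_n \subseteq D^{(k+1)}_n$ for all $k, n$ and that $D_n = \bigcup_k D^{(k)}_n$. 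Since $\theta_n$ restricts to $\theta^{(k)}_n$ on $D^{(k)}_{-n}$, the formula $a\delta_n \mapsto a\delta_n$ defines $*$-algebra inclusions between the algebraic partial crossed products, which by the universal property of the maximum norm extend to $*$-homomorphisms $\iota_k : C_0(X) \rtimes_{\theta^{(k)}} \mathbb{Z} \to C_0(X) \rtimes_{\theta^{(k+1)}} \mathbb{Z}$ and $\pi_k : C_0(X) \rtimes_{\theta^{(k)}} \mathbb{Z} \to C_0(X) \rtimes_\theta \mathbb{Z}$ satisfying $\pi_{k+1} \circ \iota_k = \pi_k$.

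Density of $\bigcup_k \pi_k(C_0(X) \rtimes_{\theta^{(k)}} \mathbb{Z})$ will follow by a standard cutoff argument. Any element of the dense algebraic partial crossed product has the form $\sum_{n \in F} a_n \delta_n$ with $F$ finite and $a_n \in C_0(D_n)$. Given $\varepsilon > 0$, for each $n \in F$ the set $K_n = \{x \in X : |a_n(x)| \geq \varepsilon\}$ is compact and contained in $D_n = \bigcup_k D^{(k)}_n$, so $K_n \subseteq D^{(k_n)}_n$ for some $k_n$. By Urysohn's lemma, pick $\phi_n \in C_c(D^{(k_n)}_n)$ with $0 \leq \phi_n \leq 1$ and $\phi_n \equiv 1$ on $K_n$; then $\phi_n a_n \in C_0(D^{(k_n)}_n)$ with $\|a_n - \phi_n a_n\|_\infty < \varepsilon$. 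For $k_0 = \max_{n \in F} k_n$, the sum $\sum_n (\phi_n a_n) \delta_n$ lies in the image of $\pi_{k_0}$ and approximates the original element.

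The main obstacle is showing that each $\pi_k$ is injective (equivalently, isometric). For this I will invoke the canonical conditional expectation $E : C_0(X) \rtimes_\theta \mathbb{Z} \to C_0(X)$ that sends $\sum a_n \delta_n \mapsto a_0$ on the algebraic core (obtained by averaging over the dual circle action), together with the analogous $E_k$ for each $\theta^{(k)}$. Since $\mathbb{Z}$ is amenable, the Fell bundle associated to each of these partial crossed products is amenable in the sense of Exel, so the full and reduced partial crossed products coincide, and both $E$ and $E_k$ are faithful. The identities $E \circ \pi_k = E_k$ hold on the algebraic core by direct computation, and extend by continuity. For $x \in C_0(X) \rtimes_{\theta^{(k)}} \mathbb{Z}$ with $\pi_k(x) = 0$, one has $E_k(x^*x) = E(\pi_k(x)^* \pi_k(x)) = 0$, so $x^*x = 0$ by faithfulness and hence $x = 0$. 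Combining injectivity of the $\pi_k$ with density of their union identifies $C_0(X) \rtimes_\theta \mathbb{Z}$ with $\varinjlim_k C_0(X) \rtimes_{\theta^{(k)}} \mathbb{Z}$. The delicate ingredient is thus the amenability-based faithfulness of the canonical conditional expectation; the rest is a routine inductive-limit verification.
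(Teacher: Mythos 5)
Your argument is correct, but it takes a different route from the paper's. The paper handles the only genuinely delicate point---that each $C_0(X)\rtimes_{\theta^{(k)}}\mathbb{Z}$ embeds isometrically into $C_0(X)\rtimes_{\theta^{(m)}}\mathbb{Z}$ (and into $C_0(X)\rtimes_{\theta}\mathbb{Z}$)---by citing Exel's Fell-bundle machinery: it checks that $\{C_0(D_n^{(k)})\delta_n\}_{n\in\mathbb{Z}}$ is a Fell sub-bundle of $\{C_0(D_n^{(m)})\delta_n\}_{n\in\mathbb{Z}}$ (which, for partial automorphisms, reduces to the single inclusion $U_k\subseteq U_m$ at $n=-1$) and then invokes \cite[Proposition~21.7]{Exel} to get the $C^*$-embedding, after which the increasing-union-of-domains observation gives the limit. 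You instead prove the injectivity of the connecting maps by hand: you build the $*$-homomorphisms from the universal property of the maximal norm, verify density of the union by a Urysohn cutoff on the compact level sets of the coefficient functions, and obtain faithfulness of the canonical conditional expectations from amenability of $\mathbb{Z}$ (so full equals reduced), which forces $\pi_k$ to be injective via $E\circ\pi_k=E_k$. This is essentially a self-contained reconstruction of the content of Exel's sub-bundle result in this special case; what it buys is transparency and independence from the cited proposition, at the cost of explicitly invoking amenability/faithful expectations, which the paper's reference absorbs. Your domain bookkeeping ($D_n^{(k)}\subseteq D_n^{(k+1)}$, $D_n=\bigcup_k D_n^{(k)}$) and cutoff density argument make explicit what the paper dispatches in one line, and the whole proposal is sound modulo the routine verifications you flag.
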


\begin{proof}
	Let $\{D^{(k)}_{n}\}_{n\in\mathbb{Z}}$ denote the domains of the $\mathbb{Z}$-partial action generated by $\theta^{(k)}$.\\
	It follows that the Fell-bundle associated to $\theta^{(k)}$, $\{ C_0(D_n^{(k)})\delta_n \}_{n\in\mathbb{Z}}$, is a Fell-sub-bundle of the Fell-bundle associated to $\theta^{(m)}$, $\{ C_0(D_n^{(m)})\delta_n \}_{n\in\mathbb{Z}}$, whenever $k\leq m$.
	Indeed, by \cite[~Definition 21.5]{Exel}, we need to show that $C_0(D_n^{(k)})\delta_n$ is a closed subspace of $C_0(D_n^{(m)})\delta_n$, for every $n$.
	Clearly, it is enough to check that $D_n^{(k)}$ is an open subset of $D_n^{(m)}$, for every $n$.\\ 
	As we are dealing with domains of partial automorphisms, we only need to check it for $n=-1$. However,  $D_{-1}^{(k)}=U_k\subseteq U_m=D_{-1}^{(m)}$.
	Proposition 21.7 in \cite{Exel} implies that for $k\leq m$, $C_0(X)\rtimes_{\theta^{(k)}}\mathbb{Z}$ is a $C^*$-subalgebra of $C_0(X)\rtimes_{\theta^{(m)}}\mathbb{Z}$.\\ 
	Since the domains of $\theta$ are the increasing union of the domains of the $\theta^{(k)}$s, we have $C_0(X)\rtimes_{\theta}\mathbb{Z}=\varinjlim_k{C_0(X)\rtimes_{\theta^{(k)}}\mathbb{Z}}$.
\end{proof}

Next, we show that a partial automorphism on a zero-dimensional, \LCH space is a direct limit of globalizable partial automorphisms. We can then apply \autoref{c:ResGlob}, and bound the nuclear dimension of the associated crossed product.

We start by recalling the proof of the following fact.

\begin{Prop}\label{p:ZerOpenSigClop}
	Let $X$ be a zero-dimensional metrizable space. Then every open set in $X$ is $\sigma$-clopen (i.e. a countable union of clopen sets).
\end{Prop}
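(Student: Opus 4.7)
My plan is a short Lindel\"of argument. Given an open set $U \subseteq X$, I would use zero-dimensionality to pick, for each $x \in U$, a clopen set $V_x$ with $x \in V_x \subseteq U$. The collection $\{V_x\}_{x \in U}$ is then an open cover of $U$. Since $X$ is separable metrizable (which I take to be implicit in the paper's standing second-countable setting), the subspace $U$ is itself second countable and hence Lindel\"of, so the cover admits a countable subcover $\{V_{x_n}\}_{n=1}^{\infty}$. Writing $U = \bigcup_{n=1}^{\infty} V_{x_n}$ then exhibits $U$ as a countable union of sets that are clopen in $X$, as required.

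The essential inputs are the definition of zero-dimensionality (existence of a clopen basis) and the Lindel\"of property of separable metric spaces. I do not expect any serious obstacle: the whole argument is essentially a one-paragraph observation, and this is why the author only ``recalls'' it. The only point to watch is the countability hypothesis. In the paper's standing \LCH second countable setting separability is automatic, but if one wanted to treat a non-separable zero-dimensional metric space directly, one could instead exploit that every open set in a metric space is $F_\sigma$, write $U = \bigcup_n F_n$ with $F_n$ closed, cover each $F_n$ by clopen subsets of $U$, and then apply a Lindel\"of-type selection (using, for instance, paracompactness of metric spaces) to each piece to recombine everything into a countable clopen union.
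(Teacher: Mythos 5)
Your main Lindel\"of argument is fine, but it proves less than the statement claims: the proposition (and \autoref{c:ZerOpSigClopHomeo}, which rests on it) is asserted for arbitrary zero-dimensional metrizable spaces, while your proof uses separability twice --- once to know that zero-dimensionality gives a clopen base, and once to extract a countable subcover. For the paper's eventual application (\autoref{t:ResZeroDim}, where $X$ is locally compact Hausdorff and second countable) this is harmless, and there your route is a perfectly good alternative. The paper's own proof avoids separability altogether: writing $U_n$ for the open $1/n$-neighbourhood of $X\setminus U$, the sets $X\setminus U_n$ and $X\setminus U$ are disjoint closed sets, and zero-dimensionality (covering dimension zero, which for metrizable spaces coincides with $\mathrm{Ind}=0$) produces a single clopen $V_n$ with $X\setminus U_n\subseteq V_n\subseteq U$; then $U=\bigcup_n V_n$. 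So the paper needs one clopen set per level of an $F_\sigma$ exhaustion, not a countable subcover.

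The genuine gap is in your fallback for the non-separable case. Paracompactness does not yield countable subfamilies, so ``a Lindel\"of-type selection using paracompactness'' is not an available step: if a piece $F_n$ is not Lindel\"of, a cover of it by clopen subsets of $U$ may admit no countable subcover at all. Moreover, the only consequence of zero-dimensionality your sketch invokes is a clopen base, i.e.\ $\mathrm{ind}=0$, and that is genuinely too weak outside the separable world: Roy's space is metrizable with a base of clopen sets but has $\dim=\mathrm{Ind}=1$, and since ``every open set is $\sigma$-clopen'' forces disjoint closed sets to be separated by clopen sets (disjointify the countable clopen cover obtained from the complements of the two closed sets), Roy's space must contain an open set that is not $\sigma$-clopen. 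Any proof of the general statement therefore has to use covering dimension zero through the clopen separation of disjoint closed sets; the repaired form of your $F_\sigma$ idea --- one clopen set squeezed between $F_n$ and $U$ --- is exactly the paper's argument.
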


\begin{proof}
	Let $U$ be an open subset in $X$. For each $n$, set $U_n\coloneqq\bigcup_{x\in X\setminus U}\mathrm{B}_{1/n}(x).$
	Then $X\setminus U_n$ and $X\setminus U$ are closed disjoint sets, and therefore have disjoint clopen neighborhoods. In particular, for each $n$, there exists 
	a clopen set $V_n$ such that $X\setminus U_n\subseteq V_n$ and $V_n\cap (X\setminus U)=\emptyset$. Clearly, $\bigcup_{n\in\mathbb{N}}V_n=U$, as desired.
\end{proof}

\begin{Cor}\label{c:ZerOpSigClopHomeo}
	Let $X$ be a zero-dimensional metrizable space, and let $\theta: U\to V$ be a homeomorphism between two open sets in $X$. Then there exists an increasing sequence $(U_n)_{n=1}^{\infty}$ of clopen subsets in $X$, such that $U=\bigcup_{n=1}^{\infty}U_n$, and $\theta(U_n)$ is clopen in $X$ for every $n\in\mathbb{N}$.
\end{Cor}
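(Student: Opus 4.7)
The plan is to apply \autoref{p:ZerOpenSigClop} to \emph{both} open sets $U$ and $V$, obtaining countable families of $X$-clopen sets exhausting $U$ and $V$ respectively. After passing to finite unions, I will have increasing sequences $(A_n)_{n=1}^\infty$ and $(B_n)_{n=1}^\infty$ of clopen subsets of $X$ with $U=\bigcup_n A_n$ and $V=\bigcup_n B_n$. I will then define
\[U_n\coloneqq A_n\cap \theta^{-1}(B_n),\]
and verify that (i) $(U_n)$ is increasing with $\bigcup_n U_n = U$; (ii) each $U_n$ is clopen in $X$; (iii) each $\theta(U_n)=\theta(A_n)\cap B_n$ is clopen in $X$.

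Monotonicity in (i) is immediate, and exhaustion follows by, given $x\in U$, choosing $n_1$ with $x\in A_{n_1}$ and $n_2$ with $\theta(x)\in B_{n_2}$, then taking $n=\max(n_1,n_2)$. Once (ii) is established, (iii) follows by the symmetric argument applied to the partial inverse $\theta^{-1}\colon V\to U$, using the roles of $B_n$ and $A_n$ interchanged. So the heart of the matter is (ii).

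Openness of $U_n$ in $X$ is clear: $A_n$ is open in $X$, and $\theta^{-1}(B_n)$ is open in $U$ hence in $X$. The only real obstacle is closedness in $X$ rather than only in the (generally non-closed) subspace $U$: a priori $\theta^{-1}(B_n)$ is merely closed in $U$, because $B_n$ is closed in $X$ (hence in $V$) and $\theta\colon U\to V$ is a homeomorphism. The trick I plan to use is the standard subspace-topology fact that a closed subset of $U$ is the trace on $U$ of an $X$-closed set; in particular there is a closed $C_n\subseteq X$ (e.g.\ the $X$-closure of $\theta^{-1}(B_n)$) with $\theta^{-1}(B_n)=C_n\cap U$. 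Because $A_n\subseteq U$, this gives
\[U_n \;=\; A_n\cap\theta^{-1}(B_n) \;=\; A_n\cap C_n,\]
an intersection of two $X$-closed sets, hence closed in $X$. In short, the clopen sets $A_n$ serve as ``shields'' promoting closedness in $U$ to closedness in $X$, and the same mechanism, applied on the $V$-side with the shield $B_n$, delivers clopenness of $\theta(U_n)$.
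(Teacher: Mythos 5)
Your proof is correct and takes essentially the same route as the paper: both arguments apply \autoref{p:ZerOpenSigClop} on the $U$-side and on the $V$-side and then use the same shielding mechanism, namely that a set which is closed in $U$ and contained in an $X$-clopen subset of $U$ is automatically closed in $X$. The only difference is bookkeeping: the paper pulls back a clopen decomposition of each image $\theta(W_i)$ and takes finite unions of the resulting doubly indexed family, whereas you intersect two monotone clopen exhaustions of $U$ and $V$ via $U_n=A_n\cap\theta^{-1}(B_n)$, which is a slightly cleaner but equivalent way to combine the two applications of the proposition.
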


\begin{proof}
	Apply \autoref{p:ZerOpenSigClop} in order to view $U=\bigcup_{i=1}^{\infty} W_i$, where each $W_i$ is clopen in $X$. For each $i$, apply again \autoref{p:ZerOpenSigClop} for the open set $\theta(W_i)$, and view $\theta(W_i)=\bigcup_{j=1}^{\infty}V_i^{(j)}$, where each $V_i^{(j)}$ is clopen in $X$. The countable family $\mathcal{U}\coloneqq \{\theta^{-1}(V_i^{(j)})\colon i,j\in\mathbb{N} \}$ is a clopen cover of $U$. Enumerate the sets in $\mathcal{U}$ by $(U_k')_{k=1}^{\infty}$. For $n\in\mathbb{N}$, set $U_n\coloneqq \bigcup_{k=1}^{n}U_k'$. Then $(U_n)_{n=1}^{\infty}$ is an increasing sequence of clopen subsets in $X$, such that $U=\bigcup_{n=1}^{\infty} U_n$, and $\theta(U_n)$ is clopen in $X$. 
\end{proof}

\begin{Thm}\label{t:ResZeroDim}
	Let $X$ be a zero-dimensional, \LCH second countable space. Assume that $\theta$ is a partial automorphism of $X$. Then $$\dimnuc(C_0(X)\rtimes_{\theta}\mathbb{Z})\leq 4.$$
\end{Thm}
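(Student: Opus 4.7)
The plan is to combine the zero-dimensional analogue of Exel's ``clopen domains imply globalizability'' criterion with the direct-limit construction of \autoref{lma:DirectLimitPar}, then feed it into \autoref{c:ResGlob}.

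First, since $X$ is locally compact Hausdorff second countable, it is metrizable, so \autoref{c:ZerOpSigClopHomeo} applies to the underlying homeomorphism $\theta: U \to V$: there exists an increasing sequence $(U_n)_{n=1}^{\infty}$ of clopen subsets of $X$ with $U=\bigcup_n U_n$ and $\theta(U_n)$ clopen in $X$ for every $n$. Let $\theta^{(n)}:U_n\to \theta(U_n)$ denote the restriction of $\theta$ and consider the $\mathbb{Z}$-partial action on $X$ it generates, with domains $\{D_k^{(n)}\}_{k\in\mathbb{Z}}$.

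The next step is to show that each $D_k^{(n)}$ is clopen in $X$. This is an induction on $|k|$: one has $D_0^{(n)}=X$, $D_{-1}^{(n)}=U_n$, $D_1^{(n)}=\theta(U_n)$, and for $k\geq 1$ the recursion
\[ D_{-(k+1)}^{(n)} \;=\; (\theta^{(n)})^{-1}\bigl(D_{-k}^{(n)}\cap \theta(U_n)\bigr), \qquad D_{k+1}^{(n)} \;=\; \theta^{(n)}\bigl(D_{k}^{(n)}\cap U_n\bigr), \]
takes clopen subsets of $X$ to clopen subsets of $X$ because $U_n$ and $\theta(U_n)$ are themselves clopen and $\theta^{(n)}$ is a homeomorphism between them. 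With all domains clopen, \autoref{r:PropTopGolob}(4) shows that the graph of the partial action is closed, so \autoref{r:PropTopGolob}(3) produces a Hausdorff globalization for $\theta^{(n)}$. \autoref{c:ResGlob} then gives
\[ \dimnuc\bigl(C_0(X)\rtimes_{\theta^{(n)}}\mathbb{Z}\bigr) \;\leq\; 2\dim(X)^2+6\dim(X)+4 \;=\; 4, \]
since $\dim(X)=0$.

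Finally, \autoref{lma:DirectLimitPar} (applied to $C_0$ instead of $C$ in the non-compact case, with the same proof) gives the inductive-limit decomposition $C_0(X)\rtimes_\theta\mathbb{Z}=\varinjlim_n C_0(X)\rtimes_{\theta^{(n)}}\mathbb{Z}$, and \autoref{p:PerPropDimNucDr}(1) then yields $\dimnuc(C_0(X)\rtimes_\theta\mathbb{Z})\leq 4$. I do not expect a serious obstacle: the induction that clopen $U_n,\theta(U_n)$ force all higher domains to be clopen is routine, and the only subtlety is to notice that non-compact $X$ still permits the Hausdorff-globalization criterion via clopen domains (which is all \autoref{r:PropTopGolob}(4) requires in one direction).
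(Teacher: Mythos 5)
Your proposal is correct and follows essentially the same route as the paper: apply \autoref{c:ZerOpSigClopHomeo} to exhaust $U$ by clopen sets with clopen images, observe the induced partial automorphisms have clopen domains and hence Hausdorff globalizations via \autoref{r:PropTopGolob}, bound each piece by \autoref{c:ResGlob}, and pass to the limit with \autoref{lma:DirectLimitPar} and \autoref{p:PerPropDimNucDr}(1). The only difference is that you spell out the clopen-domain induction, which the paper merely asserts.
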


\begin{proof}
	Let $\theta: U\to V$ be a homeomorphism between two open sets of $X$. \\
	By \autoref{c:ZerOpSigClopHomeo}, we can write $U=\bigcup_{k\in\mathbb{N}}U_k$ as an increasing union of clopen sets in $X$ such that $\theta(U_k)$ is clopen in $X$ for every $k\in\mathbb{N}$. Consider the partial automorphism on $X$ generated by the homeomorphism $\theta^{(k)}:U_k\to \theta(U_k)$, where $\theta^{(k)}$ denotes the restriction $\theta$ to $U_k$. Notice that the domains $\{D^{(k)}_{n}\}_{n\in\mathbb{Z}}$ of $\theta^{(k)}$ are clopen.
	By \autoref{lma:DirectLimitPar}, we have $C_0(X)\rtimes_{\theta}\mathbb{Z}=\varinjlim_k{C_0(X)\rtimes_{\theta^{(k)}}\mathbb{Z}}$.\\
	Finally, by \autoref{r:PropTopGolob}, each $\theta^{(k)}$ admits a Hausdorff globalization, so $$\dimnuc(C_0(X)\rtimes_{\theta^{(k)}}\mathbb{Z})\leq 4,$$
	by \autoref{c:ResGlob}. Therefore, $$\dimnuc(C_0(X)\rtimes_{\theta}\mathbb{Z})\leq 4,$$
	by \autoref{p:PerPropDimNucDr}(1).
\end{proof}

We remark that the above method works whenever a partial action can be viewed as a direct limit of globalizable partial actions.

\section{Partial actions of finite groups}\label{s:FinGrp}

We decompose finite group partial actions into globalizable subsystems, which have subhomogeneous crossed products. This will allow us to bound the nuclear dimension, independently of the number $|G|$.

\begin{Def}
	Let $n\in\mathbb{N}$. A $C^*$-algebras $A$ is called \emph{$n$-subhomogeneous} if all its irreducible representations are of dimension at most $n$.
\end{Def}

It is a theorem of Blackadar that $A$ is $n$-subhomogeneous if and only if $A$ embeds into $M_n(C_0(X))$ for some locally compact Hausdorff space $X$ (see \cite[Proposition~3.4.2]{RorYBook}). In particular, every $C^*$-subalgebra of an $n$-subhomogeneous $C^*$-algebra is again $n$-subhomogeneous.

For the \emph{primitive ideal space} of a $C^*$-algebra and its topological properties, we refer to \cite[Chapter~II.6.5]{Black2}. For $k\in\mathbb{N}$, denote by $\mathrm{Prim}_k(A)$ the set of kernels of $k$-dimensional irreducible representations. We equip $\mathrm{Prim}_k(A)\subseteq \mathrm{Prim}(A)$ with the induced subspace topology.\\

Recall the following theorem by Winter.

\begin{Thm}\cite[Theorem~1.6]{DrSH}  \label{t:WinDrSH}
	Let $A$ be a separable, subhomogeneous $C^*$-algebra. Then $$\dr(A)=\max_k\{\dim \ \Prim_k(A)\}$$
\end{Thm}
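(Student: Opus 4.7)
\emph{Approach.} The equality splits into two inequalities.

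For the lower bound $\dr(A) \ge \max_k \dim \Prim_k(A)$, fix $k$ and let $X := \Prim_k(A)$, viewed with the induced topology from $\Prim(A)$. My plan is to locate inside $A$, as an ideal-quotient, a purely $k$-homogeneous $C^*$-algebra $B$ whose primitive ideal space is $X$. By the Fell-Tomiyama structure theorem, such a $B$ is the section algebra of a locally trivial $M_k$-bundle over $X$, hence contains a full hereditary $C^*$-subalgebra Morita equivalent to $C_0(X)$. Combined with \autoref{p:PerPropDimNucDr}(3)--(4), this yields $\dr(B) = \dim X$, and then the monotonicity of $\dr$ under ideals and quotients in \autoref{p:PerPropDimNucDr}(2)--(3) transfers the bound to $A$ and gives $\dr(A) \ge \dim X$.

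For the upper bound, I would start from a finite filtration $0 = J_0 \trianglelefteq J_1 \trianglelefteq \cdots \trianglelefteq J_n = A$ by ideals in which each subquotient $J_k/J_{k-1}$ is $k$-homogeneous with primitive ideal space of dimension $d_k := \dim \Prim_k(A)$; such a filtration is standard for a subhomogeneous $C^*$-algebra of homogeneity bound $n$. Inductively applying the estimate $\dr(A) \le \dr(J)+\dr(A/J)+1$ from \autoref{p:PerPropDimNucDr}(2) only yields a bound of order $\sum_k d_k + n$, far weaker than $\max_k d_k$.

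The main obstacle, and the real content of the theorem, is to upgrade this sum to a maximum. Given a finite $F \subseteq A$ and $\eps > 0$, the plan is: on each subquotient $J_k/J_{k-1}$, use local triviality of its $M_k$-bundle and a multiplicity-$(d_k+1)$ open cover of $\Prim_k(A)$ to build a $(1+d_k)$-colourable c.p.c.\ approximation into a finite-dimensional summand of $M_k$-matrices over points. The delicate step is then to lift and patch these approximations coherently along the filtration, using projectivity of cones over matrix algebras to get c.p.c.\ order-zero liftings back to $A$, and partitions of unity chosen so that the $i$-th colour from stage $k$ can be merged with the $i$-th colour from any other stage into a single c.p.c.\ order-zero map on $A$. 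The analytic heart of the argument is exactly this simultaneous lifting-and-gluing, which lets colours be reused across stages and bounds the total number of colours by $\max_k(d_k+1)$ rather than $\sum_k(d_k+1)$.
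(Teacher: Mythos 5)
This statement is not proved in the paper at all: it is quoted verbatim from Winter \cite[Theorem~1.6]{DrSH} and used as a black box, so there is no ``paper proof'' for your argument to parallel; the question is whether your proposal stands on its own as a proof of Winter's theorem, and it does not. Your lower bound is essentially fine in outline ($\Prim_k(A)$ is locally closed, so it is the primitive ideal space of a $k$-homogeneous subquotient $B$, and monotonicity of $\dr$ under ideals and quotients applies), although the specific claim that a locally trivial $M_k$-bundle contains a full hereditary subalgebra Morita equivalent to $C_0(X)$ is not true in general (there need not be a global rank-one projection); the standard fix is to quotient onto closed trivializing pieces $C(Y_i,M_k)$, where $\dr=\dim Y_i$, and recover $\dim X$ by the countable sum theorem.

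The genuine gap is the upper bound, which you yourself identify as ``the real content of the theorem'' and then do not supply. The step where the colour-$i$ approximations coming from the different homogeneous subquotients $J_k/J_{k-1}$ are lifted to $A$ and merged into a single c.p.c.\ order-zero map is precisely what Winter's paper proves, and it is not a routine patching: merging two order-zero maps into one requires controlling how the approximations of the quotient and of the ideal interact near the boundary of $\Prim_k(A)$, where $k$-dimensional representations degenerate into lower-dimensional ones, and this is done in \cite{DrSH} by a delicate induction using quasicentral approximate units, order-zero lifting, and approximations specifically adapted to that boundary behaviour. Your proposal states the desired outcome (``colours can be reused across stages'') without giving the construction or any estimate showing that the merged maps are still order zero and still approximate the identity on $F$ up to $\eps$; as written, the argument only yields the additive bound $\sum_k(d_k+1)$ that you correctly dismiss. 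So the proposal is an accurate description of the shape of Winter's proof, but the analytic core is missing, and that core cannot be waved through.
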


We present an easy corollary of this theorem, which will be useful for us.

\begin{lma}\label{l:DrExtSH}
	Let $0\to A\to E\to B\to 0$ be a short-exact sequence of $C^*$-algebras. Assume that $E$ is separable and unital, and that $A,B$ are $n$-subhomogeneous.
	Then $E$ is $n$-subhomogeneous and 
	$$\dr(E)= \max\{\dr(A),\dr(B)\}$$
\end{lma}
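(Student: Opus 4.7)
First I would verify that $E$ is $n$-subhomogeneous. Let $\pi : E \to B(H)$ be an irreducible representation. If $\pi(A) = 0$, then $\pi$ factors through $B$, and the induced irrep of $B$ has dimension $\dim H \leq n$. Otherwise $\pi(A)$ is a nonzero two-sided ideal of $\pi(E) \subseteq B(H)$, hence acts irreducibly on $H$, so $\pi|_A$ is an irrep of $A$ on the same Hilbert space, again giving $\dim H \leq n$. In particular, all three algebras $A$, $B$, $E$ are separable and $n$-subhomogeneous, so \autoref{t:WinDrSH} computes all three of their decomposition ranks as $\max_k \dim \Prim_k(\cdot)$.

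The lower bound $\max\{\dr(A), \dr(B)\} \leq \dr(E)$ is immediate from \autoref{p:PerPropDimNucDr}: $\dr(B) = \dr(E/A) \leq \dr(E)$ directly, and $\dr(A) \leq \dr(E)$ since an ideal is in particular a hereditary subalgebra. For the reverse inequality I would use the standard decomposition of the primitive ideal space: the set $\mathcal{O} \coloneqq \{P \in \Prim(E) : A \not\subseteq P\}$ is open in $\Prim(E)$ and is homeomorphic to $\Prim(A)$ via $P \mapsto P \cap A$, while $\mathcal{F} \coloneqq \{P \in \Prim(E) : A \subseteq P\}$ is the closed complement and is homeomorphic to $\Prim(B)$ via $P \mapsto P/A$. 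The dichotomy in the first paragraph shows exactly that these homeomorphisms identify the irrep associated to $P \in \mathcal{O}$ with the irrep of $A$ associated to $P \cap A$ (on the same Hilbert space), and similarly for $\mathcal{F}$; in particular they preserve the dimension of the irrep, so they restrict to homeomorphisms $\Prim_k(E) \cap \mathcal{O} \cong \Prim_k(A)$ and $\Prim_k(E) \cap \mathcal{F} \cong \Prim_k(B)$ for each $k$.

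Now $\Prim_k(E)$ is a \LCH second countable space (by the standard structure theory of subhomogeneous $C^*$-algebras underlying \autoref{t:WinDrSH}), and is the disjoint union of the open subset corresponding to $\Prim_k(A)$ and the closed subset corresponding to $\Prim_k(B)$; since $\Prim_k(E)$ is then metrizable, both pieces are $\mathrm{F}_\sigma$. Applying \autoref{t:DimF-Sig} yields
\[
\dim \Prim_k(E) \;=\; \max\{\dim \Prim_k(A),\, \dim \Prim_k(B)\},
\]
and taking the maximum over $k$ together with \autoref{t:WinDrSH} gives $\dr(E) = \max\{\dr(A), \dr(B)\}$.

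The only delicate point is ensuring that the set-theoretic partition $\Prim(E) = \mathcal{O} \sqcup \mathcal{F}$ is compatible with the $\Prim_k$-stratification in the strong topological sense needed for \autoref{t:DimF-Sig}; once the irrep dimension is transported correctly across the homeomorphisms $\mathcal{O} \cong \Prim(A)$ and $\mathcal{F} \cong \Prim(B)$, the rest of the argument is a clean application of dimension theory and Winter's formula.
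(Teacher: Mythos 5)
Your proposal is correct and follows essentially the same route as the paper: identify $\Prim_k(E)$ as the union of an open copy of $\Prim_k(A)$ and a closed copy of $\Prim_k(B)$, note metrizability makes the open piece $\mathrm{F}_\sigma$, and combine \autoref{t:DimF-Sig} with \autoref{t:WinDrSH}. The only cosmetic difference is that you verify $n$-subhomogeneity of $E$ via the irreducible-representation dichotomy (kill $A$ or restrict irreducibly to $A$), whereas the paper invokes $E^{**}=A^{**}\oplus B^{**}$; both are standard and equally valid.
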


\begin{proof}
	As $E^{**}=A^{**}\oplus B^{**}$, we see that $E$ is $n$-subhomogeneous. The inequality $$\dr(E)\geq  \max\{\dr(A),\dr(B)\}$$ follows from \autoref{p:PerPropDimNucDr}. We prove the converse inequality. Since $E$ is separable, $\Prim_k(E)$ is a \LCH second countable space. By Urysohn's metrization theorem, it is metrizable. Since $E$ is unital and subhomogeneous, we have $\Prim_k(E)=\Prim_k(A)\cup \Prim_k(B)$, where $\Prim_k(B)\subseteq \Prim_k(E)$ is closed and $\Prim_k(A)\subseteq \Prim_k(E)$ is open.
	Since every open set in a metrizable space is an $\mathrm{F}_\sigma$-set, the result follows from \autoref{t:DimF-Sig} and \autoref{t:WinDrSH}.
\end{proof}

\begin{Thm}\label{t:ResCptFinGrp}
	Let $X$ be a compact Hausdorff second countable space, let $G$ be a finite cyclic group and let $\theta=(\{D_g\}_{g\in G},\{\theta_g\}_{g\in G})$ be a topological partial action of $G$ on $X$. Denote the induced partial action on $C(X)$ by $\hat{\theta}$. Then 
	$$\dr(C(X)\rtimes_{\hat{\theta}}G)\leq \dim(X).$$
\end{Thm}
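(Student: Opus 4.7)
The plan is to stratify $X$ by the ``domain pattern'' of the partial action so that on each stratum the restricted partial action is globalizable, and then to assemble the strata using the primitive-ideal characterization of the decomposition rank for subhomogeneous $C^*$-algebras (\autoref{t:WinDrSH}). For $x\in X$ set $S(x)\coloneqq\{g\in G:x\in D_{g^{-1}}\}\subseteq G$, and for $1\le j\le|G|$ put $W_j\coloneqq\{x\in X:|S(x)|\ge j\}$. Openness of each $D_{g^{-1}}$ gives that $x\mapsto|S(x)|$ is lower semicontinuous, so every $W_j$ is open. Using \autoref{o:DomRel}, one checks that whenever $\theta_g(x)$ is defined one has $S(\theta_g(x))=S(x)g^{-1}$; in particular $|S|$ is constant on $\theta$-orbits, so each $W_j$ is $\theta$-invariant. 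The successive differences $X_j\coloneqq W_j\setminus W_{j+1}=\{|S|=j\}$ are locally closed in $X$, hence LCH and second countable in the subspace topology, and only finitely many are nonempty.

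The first key step I would carry out is that the restriction of $\theta$ to each $X_j$ has \emph{clopen} domains, which by \autoref{r:PropTopGolob}(4) and \autoref{r:PropAlgGlob}(3) will force globalizability. Openness of $D_g\cap X_j$ in $X_j$ is automatic. For closedness, take $x_n\in D_g\cap X_j$ with $x_n\to x\in X_j$. Since $G$ is finite we may pass to a subsequence on which $S(x_n)$ equals a fixed set $T\subseteq G$ with $g^{-1}\in T$ and $|T|=j$. For any $h\in S(x)$ the openness of $D_{h^{-1}}$ forces $h\in S(x_n)=T$ for large $n$, so $S(x)\subseteq T$; but $|S(x)|=j=|T|$, hence $S(x)=T\ni g^{-1}$, i.e.\ $x\in D_g$. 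Now \autoref{c:ResGlob} applied to the globalizable restriction on the LCH second countable space $X_j$ yields
\[\dr\bigl(C_0(X_j)\rtimes_{\hat\theta|_{X_j}}G\bigr)\le\dim(X_j)\le\dim(X),\]
the last inequality being \autoref{t:DimSub}; in particular each $C_0(X_j)\rtimes G$ is $|G|$-subhomogeneous.

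To finish, I would use \autoref{p:InvSetS.e.s} along the invariant open chain $X=W_1\supseteq W_2\supseteq\cdots\supseteq W_{|G|}$ to obtain short exact sequences $0\to C_0(W_{j+1})\rtimes G\to C_0(W_j)\rtimes G\to C_0(X_j)\rtimes G\to 0$. The $A^{**}\oplus B^{**}$-argument from \autoref{l:DrExtSH} shows inductively that each $C_0(W_j)\rtimes G$, and in particular $C(X)\rtimes_{\hat\theta}G$ itself, is separable, unital, and $|G|$-subhomogeneous, so \autoref{t:WinDrSH} reduces the task to bounding $\dim\Prim_k(C(X)\rtimes G)$ for each $k$. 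Taking primitive spectra in the above sequences, $\Prim(C_0(W_j)\rtimes G)$ is open in $\Prim(C(X)\rtimes G)$, so each $\Prim(C_0(X_j)\rtimes G)$ sits as a locally closed subset, and intersecting with the locally closed, metrizable set $\Prim_k(C(X)\rtimes G)$ yields an $\mathrm{F}_\sigma$-decomposition to which \autoref{t:DimF-Sig} applies, giving
\[\dim\Prim_k(C(X)\rtimes G)\le\max_j\dim\Prim_k(C_0(X_j)\rtimes G)\le\max_j\dr(C_0(X_j)\rtimes G)\le\dim(X).\]

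The main obstacle to overcome is the clopen-domain claim on each stratum $X_j$: it relies crucially both on the finiteness of $G$ (to extract a subsequence with a constant domain pattern) and on the normalization $|S|\equiv j$ on $X_j$ (to upgrade the a priori one-sided containment $S(x)\subseteq T$ to equality). Once this is in place, extending subhomogeneity along the filtration and bounding dimensions via $\Prim_k$ is routine given the permanence properties collected in \autoref{s:Pre}.
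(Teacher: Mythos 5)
Your proposal is correct and essentially reproduces the paper's argument: your stratification by the number of domains containing a point is exactly the paper's inductive decomposition into the sets $X^{(k)}$, the relative clopenness of the domains on each stratum yields a Hausdorff globalization via \autoref{r:PropTopGolob}(4), and \autoref{c:ResGlob} (using that $G$ is cyclic) handles each piece, with subhomogeneity coming from \autoref{t:MorEqGlob} together with the subhomogeneity of global finite-group crossed products. The only cosmetic difference is the assembly: you run the $\Prim_k$/$\mathrm{F}_\sigma$ dimension argument once along the whole filtration, whereas the paper packages that same argument as \autoref{l:DrExtSH} and applies it iteratively to two-term extensions; the two are interchangeable.
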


In order to make the following somewhat involved proof more transparent, we briefly explain the case of a topological partial action of a $4$ element group $G=\{g_1=e,g_2,g_3,g_4 \}$.

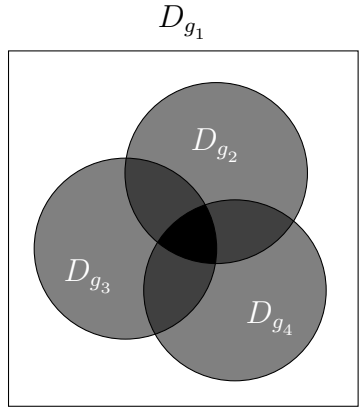
\begin{figure}[h]\label{fig:DecGlob}
	\centering
	\begin{tikzpicture}
	\begin{scope}
	\fill[gray]\firstcircle;
	\fill[gray] \secondcircle;
	\fill[gray] \thirdcircle;
	\end{scope}
	\begin{scope}
	\clip \secondcircle;
	\fill[darkgray] \firstcircle;
	\end{scope}
	\begin{scope}
	\clip \secondcircle;
	\fill[darkgray] \thirdcircle;
	\end{scope}
	\begin{scope}
	\clip \firstcircle;
	\fill[darkgray] \thirdcircle;
	\end{scope}
	\begin{scope}
	\clip \firstcircle;
	\clip \secondcircle;
	\fill[black] \thirdcircle;
	\end{scope}[font=\large]
	\draw \firstcircle node[text=white,above] {$D_{g_{_2}}$};
	\draw \secondcircle node [text=white,below left] {$D_{g_{_3}}$};
	\draw \thirdcircle node [text=white,below right] {$D_{g_{_{4}}}$};
	\draw ([xshift=-0.8em,yshift=-0.8em]current bounding box.south west)
	rectangle
	([xshift=1em,yshift=1em]current bounding box.north east);
	\path (current bounding box.north) node[above]{$D_{g_{_1}}$};
	\end{tikzpicture}
	\caption{Decomposition into globalizable subsystems}
\end{figure}

We decompose inductively our system into invariant subsystems which admit globalizations and apply \autoref{c:ResGlob}. We first consider the restriction of $\theta$ to the intersection of all domains (the black area in \autoref{fig:DecGlob}), there it acts globally. Then, we restrict our system to the darker grey area, which can be checked to be invariant, and the restricted system has clopen domains, thus globalizable by \autoref{r:PropTopGolob}(4). Next, we consider the restriction to the (invariant) light grey area, there all non-trivial domains are disjoint, and therefore clopen. 
Finally, for the restriction to the white area, all non-trivial domains are empty, thus the crossed product is simply the commutative algebra $C(X\setminus(D_{g_2}\cup D_{g_3}\cup D_{g_4}))$.
In this way, we obtain inductively short exact sequences and \autoref{p:PerPropDimNucDr}(1) gives a bound for the nuclear dimension of the crossed product. In order to get a bound independent of the cardinality of the group, we apply \autoref{l:DrExtSH}.

\begin{proof}[Proof of \autoref{t:ResCptFinGrp}]
	Let $n$ denote the cardinality of $G$. We define inductively short exact sequences of $C^*$-algebras
	$$0\to A_k\to E_k\to B_k\to 0, \ \ \ \ \ (0\leq k\leq n-2) $$
	such that the following properties are satisfied.
	\begin{enumerate}
		\item $E_0=C(X)\rtimes_{\hat{\theta}} G$.
		\item $A_k=C_0(X^{(k)})\rtimes_{\hat{\theta}} G$, where $X^{(k)}$ is a subspace of $X$, and $\hat{\theta}|_{X^{(k)}}$ admits a Hausdorff globalization.
		\item $A_k$ is $n$-subhomogeneous and $\dr(A_k)\leq \dim(X)$.
		\item $E_{k+1}=B_k$ separable and unital, for $0\leq k\leq n-3$.
		\item $B_{n-2}$ is a commutative $C^*$-algebra, with $\dr(B_{n-2})\leq \dim(X)$.
	\end{enumerate}
	
	We observe that $(2)\implies (3)$, as by \autoref{t:MorEqGlob}, \cite[Theorem~2.1]{HW} and \autoref{c:ResGlob}, $(2)$ implies that each $A_k$ is a full hereditary subalgebra of an $n$-subhomogeneous $C^*$-algebra with decomposition rank bounded by $\dim(X)$. Notice that once we construct such short exact sequences, the proof is complete, applying \autoref{l:DrExtSH} repeatedly.
	
	Write $G=\{g_1,\ldots,g_n\}$. 
	Set $X^{(0)}=\bigcap_{g\in G} D_g$. Then $\theta|_{X^{(0)}}$ is a global action. Let $k\geq 1$, and assume that we have defined $X^{(0)},\ldots, X^{(k-1)}$ which satisfy property (2) above. We denote by $\theta^{(k)}$ the restriction of $\theta$ to ${X\setminus \bigcup_{i=0}^{k-1}X^{(i)}}$, and by $\{D_g^{(k)}\}_{g\in G}$ the domains of $\theta^{(k)}$. Set $$X^{(k)}\coloneqq \bigcup\limits_{m_1,\ldots,m_k\in \{1,\ldots,n\}}\bigcap\limits_{g\in G\setminus{ \{g_{m_1},\ldots,g_{m_k}\}  } }D_g^{(k)}.$$
	In other words, $X^{(k)}$ consists of intersections of (at least) $n-k$ domains. Notice that intersections of more than $n-k$ domains would belong to $X^{(0)}\cup\ldots \cup X^{(k-1)}$, which was taken out. Therefore, the domains of $\theta^{(k)}$ satisfy the property that any intersection of $n-k+1$ of them is empty, and we actually have
	$$X^{(k)}= \bigsqcup\limits_{m_1,\ldots,m_k\in \{1,\ldots,n\}}\bigcap\limits_{g\in G\setminus{ \{g_{m_1},\ldots,g_{m_k}\}  } }D_g^{(k)}.$$
	This shows that the domains of $\theta|_{X^{(k)}}$ are clopen and it admits a Hausdorff globalization by \autoref{r:PropTopGolob}(4). Moreover, \autoref{o:DomRel} implies that $X^{(k)}$ is an open $\theta^{(k)}$-invariant subset. 
	To this end, we observe that conditions (1)-(5) determine the short exact sequences; for $k\geq 0$ we have
	\[E_k= C\big(X\setminus \bigcup_{i=0}^{k-1}X^{(i)}\big)\rtimes_{\theta^{(k)}} G, \text{ and } \]
	\[B_k=C\big(X\setminus \bigcup_{i=0}^{k}X^{(i)}\big)\rtimes_{\theta^{(k+1)}} G.\]
	The $C^*$-algebras $E_{k+1}=B_{k}$ are unital and separable, since $C(X)$ is separable whenever $X$ is a compact metrizable space.
	To see that $B_{n-2}$ is a commutative $C^*$-algebra, recall that $\theta^{(n-1)}$ has the property that its domains are pairwise disjoint. Thus, its only non-empty domain is the one corresponds to the identity element of the group. Using \autoref{t:DimSub} and \autoref{p:PerPropDimNucDr}(4), we conclude that $\dr(B_{n-2})\leq \dim(X)$.
\end{proof}

\begin{Rmk}\label{r:DesFinCP}
	\mbox{}
	\begin{enumerate}
		\item The assumption that $G$ is cyclic is only needed in order to apply the known results for global systems. In work in progress, Hirshberg and Wu show that crossed products associated to actions of finitely generated virtually nilpotent groups on finite dimensional spaces always have finite nuclear dimension. In particular, they will cover finite groups, and one would be able to obtain a bound also for partial actions by general finite groups, following the same proof as for \autoref{t:ResCptFinGrp}.
		\item With more work, it is possible to give a similar decomposition of the crossed product algebra also in the noncommutative case, and describe explicitly the algebras $\{A_k\}_{k=0}^{n-2}$ that appear in the proof, see \cite{AGG} for more details.
		
	\end{enumerate}
\end{Rmk}

\begin{Cor}\label{c:ResFinGrp}
	Let $\theta=(\{D_g\}_{g\in G}, \{\theta_g\}_{g\in G})$ be a partial action of a finite cyclic group $G$ on a \LCH second countable space $X$. Then 
	$$\dr(C_0(X)\rtimes_{\theta} G)\leq \dim(X).$$
\end{Cor}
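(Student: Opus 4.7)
The plan is to reduce to the compact case already settled in \autoref{t:ResCptFinGrp} by extending the partial action to the one-point compactification $X^+=X\cup\{\infty\}$. I would define $\theta^+$ on $X^+$ by setting $D^+_e=X^+$ with $\theta^+_e=\id$, and $D^+_g=D_g$, $\theta^+_g=\theta_g$ for $g\neq e$. Each $D^+_g$ is open in $X^+$ (since $D_g$ is open in $X$ and $X$ is open in $X^+$), and because $\infty$ belongs to $D^+_h$ only when $h=e$, the partial action axioms for $\theta^+$ reduce to those of $\theta$. Thus $\theta^+$ is a genuine partial action of $G$ on $X^+$ whose restriction to $X$ is precisely $\theta$.

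Next I would verify that $X$ is an open $\theta^+$-invariant subset of $X^+$: for every $g\in G$,
\[\theta^+_g(X\cap D^+_{g^{-1}})=\theta_g(D_{g^{-1}})=D_g\subseteq X.\]
By \autoref{p:InvSetS.e.s} this produces a short exact sequence
\[0\to C_0(X)\rtimes_\theta G\to C(X^+)\rtimes_{\theta^+} G\to C(\{\infty\})\rtimes_{\theta^+|_{\{\infty\}}}G\to 0,\]
in which the left-hand ideal is the object of interest. Since ideals are hereditary $C^*$-subalgebras, \autoref{p:PerPropDimNucDr}(3) gives $\dr(C_0(X)\rtimes_\theta G)\leq \dr(C(X^+)\rtimes_{\theta^+} G)$.

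Finally, I would apply \autoref{t:ResCptFinGrp} to the compact Hausdorff second countable space $X^+$ to obtain $\dr(C(X^+)\rtimes_{\theta^+} G)\leq \dim(X^+)$, and close the argument by checking that $\dim(X^+)=\dim(X)$. The latter follows from \autoref{t:DimF-Sig}: in the compact metrizable space $X^+$, both $X$ (open) and $\{\infty\}$ (closed) are $\mathrm{F}_\sigma$-subsets, so $\dim(X^+)=\max\{\dim(X),0\}=\dim(X)$ (the empty case is trivial). The step requiring the most care is the clean formulation of $\theta^+$ and the verification that $X$ is $\theta^+$-invariant; everything else is direct assembly of tools already established in the excerpt.
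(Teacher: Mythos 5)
Your proposal is correct and follows essentially the same route as the paper: extend $\theta$ to the one-point compactification $X^+$ by declaring $\infty$ to lie only in the domain of the identity element, observe that $X$ is an open invariant set so that $C_0(X)\rtimes_\theta G$ is an ideal in $C(X^+)\rtimes_{\theta^+}G$, and then combine heredity of decomposition rank with \autoref{t:ResCptFinGrp} and $\dim(X^+)=\dim(X)$. Your explicit verification of invariance and of $\dim(X^+)=\dim(X)$ only spells out steps the paper leaves implicit.
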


\begin{proof}
	The one-point compactification of $X$, $X^{+}$, is a compact Hausdorff second countable space.
	Let $\theta^{+}=(\{E_g\}_{g\in G}, \{\theta_g^{+}\}_{g\in G})$ be the partial action of $G$ on $X^+$, defined by: \\
	\[E_g=D_g \text{ \ for all \ } g\neq 1, \  E_1 = X^{+}; \  \ \theta_g^+=\theta_g \text{ \ for all \ } g\neq 1, \ \theta_1^+=\id_{X^+}.\]
	Then $X$ is an open $\theta^+$-invariant subspace. The restriction of $\theta^{+}$ to $X$ is exactly $\theta$. Thus, by \autoref{p:InvSetS.e.s}, $C_0(X)\rtimes_{\theta} G$ is an ideal inside $C(X^+)\rtimes_{\theta^{+}}G$. By \autoref{t:ResCptFinGrp}: $$\dr(C_0(X)\rtimes_{\theta} G)\leq \dr(C(X^+)\rtimes_{\theta^{+}}G)\leq \dim(X^+)=\dim(X).$$
\end{proof}

\section{Partial automorphisms with supported domains}
\label{s:SupDom}	
In this section we show that crossed products by partial automorphisms with \textit{finitely supported domains}, namely
$$\ldots=\emptyset=\emptyset=\ldots=\emptyset\subseteq D_{n}\subseteq\ldots\subseteq D_1\subseteq X,$$
and (automatically)
$$\ldots=\emptyset=\emptyset=\ldots=\emptyset\subseteq D_{-n}\subseteq\ldots\subseteq D_{-1}\subseteq X,$$
for some $n\in\mathbb{N}$, are isomorphic to crossed products by partial actions of finite cyclic groups. Thus, we can apply the results from the previous section. As a consequence, we can deal with partial automorphisms with \textit{uniformly bounded orbits}, that we define below.

\begin{Claim}\label{cl:FinSupDom}
	Let $\theta$ be a partial automorphism on a \LCH space $X$. If $\theta$ has finitely supported domains, and $n\in\mathbb{N}$ is such that $D_{n+1}=\emptyset$, then
	there exists a partial action $\eta$ of $\mathbb{Z}/(2n+1)\mathbb{Z}$ on $X$, such that
	$$C_0(X)\rtimes_{\theta} \mathbb{Z}\cong C_0(X)\rtimes_{\eta} \mathbb{Z}/(2n+1)\mathbb{Z}.$$
\end{Claim}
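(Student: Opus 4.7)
The plan is to exhibit an explicit $*$-algebra isomorphism between the dense $*$-subalgebras $C_0(X)\rtimes_{\theta,alg}\mathbb{Z}$ and $C_0(X)\rtimes_{\eta,alg}\mathbb{Z}/(2n+1)\mathbb{Z}$, and then invoke the universal property of the full partial crossed product to pass to the $C^*$-completions.

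First I would construct $\eta$. Since $D_{n+1}=\emptyset$ and \autoref{r:IncDom} gives the analogous chain of inclusions on the negative side (together with the symmetric fact that $\theta_{-m}$ is a homeomorphism onto $D_{-m}$), it follows that $D_k=\emptyset$ for all $|k|>n$. Choose the set $\{-n,-n+1,\ldots,n\}$ as a complete set of representatives for $\mathbb{Z}/(2n+1)\mathbb{Z}$, and for each $k$ in this range set $E_{[k]}\coloneqq D_k$ and $\eta_{[k]}\coloneqq\theta_k$. To see this defines a partial action, I would verify $\eta_{[k]}\circ\eta_{[l]}\subseteq\eta_{[k+l]}$: if $k+l\in\{-n,\ldots,n\}$ the relation is inherited from $\theta$, while if $|k+l|>n$ the representative $k+l\mp(2n+1)$ also has absolute value exceeding $n$, so the codomain $E_{[k+l]}$ is empty; on the other hand, using \autoref{o:DomRel}, the domain of $\theta_k\circ\theta_l$ equals $\theta_{-l}(D_{-k}\cap D_l)=D_{-k-l}\cap D_{-l}=\emptyset$. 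Thus both compositions are the empty map and the inclusion holds trivially.

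Next I would define the map on the algebraic level by
\[ \Phi\bigl(\textstyle\sum_k a_k\delta_k\bigr)\;\coloneqq\;\textstyle\sum_k a_k\delta_{[k]},\]
where the sums are over $k\in\{-n,\ldots,n\}$; note that the requirement $a_k\in D_k$ in the $\mathbb{Z}$-crossed product forces $a_k=0$ whenever $|k|>n$, so every element is represented by a sum over this range, and $\Phi$ is a bijection of vector spaces. To check it is a $*$-homomorphism, for $a\in D_k$, $b\in D_l$ I would compare
\[ (a\delta_k)(b\delta_l)=\theta_k(\theta_{-k}(a)b)\delta_{k+l} \quad\text{and}\quad (a\delta_{[k]})(b\delta_{[l]})=\eta_{[k]}(\eta_{[-k]}(a)b)\delta_{[k+l]}.\]
Using \autoref{o:DomRel}, the element $\theta_k(\theta_{-k}(a)b)$ lies in $D_k\cap D_{k+l}$; when $|k+l|>n$ this intersection is empty, making both products zero, and otherwise the representatives agree so the two products correspond under $\Phi$. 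The involution $*$ is handled identically.

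Finally I would extend $\Phi$ to the completions via the universal property of the full partial crossed product. Any $*$-representation of $C_0(X)\rtimes_{\theta,alg}\mathbb{Z}$ pulls back through $\Phi^{-1}$ to a $*$-representation of $C_0(X)\rtimes_{\eta,alg}\mathbb{Z}/(2n+1)\mathbb{Z}$, and vice versa; therefore the maximal $C^*$-seminorms on the two algebraic crossed products agree via $\Phi$, and the map extends to a $*$-isomorphism of $C^*$-algebras. The only mildly delicate point is the bookkeeping for compositions that wrap around the cyclic group, which is handled uniformly by observing that both sides vanish whenever $|k+l|>n$; this is where I expect most of the care to be needed.
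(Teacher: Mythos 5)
Your proposal is correct and follows essentially the same route as the paper: set $E_{[k]}=D_k$ and $\eta_{[k]}=\theta_k$ for $|k|\le n$, handle the wrap-around cases by noting the compositions are empty maps, and identify the two crossed products by the obvious coefficient-preserving map, passing to the completions via the universal property. One small slip: when $|k+l|>n$ the representative $k+l\mp(2n+1)$ lands back in $\{-n,\dots,n\}$, so $E_{[k+l]}$ need \emph{not} be empty --- but this is harmless, since your very next observation (that the domain $D_{-k-l}\cap D_{-l}$ of $\theta_k\circ\theta_l$ is empty) already makes the inclusion $\eta_{[k]}\circ\eta_{[l]}\subseteq\eta_{[k+l]}$ hold trivially, which is exactly the paper's argument.
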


\begin{proof}
	Denote $\mathbb{Z}/(2n+1)\mathbb{Z}=\{\overline{-n}, \ldots \overline{-1} ,\overline{0}, \overline{1},\ldots, \overline{n} \}$.
	In order to define $\eta$, we first give the domains $\{E_{\overline{k}}\}_{k=-n}^{n}$ and the maps $\{\eta_{\overline{k}}\}_{k=-n}^{n}$. For $ k\in\{-n,\ldots, n\}$, set
	$$E_{\overline{k}}=D_k,  \text{ \ \ \ and \ \ \  } \eta_{\overline{k}}=\theta_k$$
	We have to check that $\eta$ is a well-defined partial action of $\mathbb{Z}/(2n+1)\mathbb{Z}$. Namely, 
	$$\eta_{\overline{m}}\circ\eta_{\overline{k}}\subseteq \eta_{\overline{m+k}}, $$
	for all $m,k\in \{-n,\ldots, n\}$. 
	We separate to cases:
	\begin{enumerate}
		\item If $-n\leq m+k\leq n$, then
		$$\eta_{\overline{m}}\circ\eta_{\overline{k}}=\theta_m\circ\theta_k\subseteq \theta_{m+k}=\eta_{\overline{m+k}}, $$
		because $\theta$ is a partial action of $\mathbb{Z}$.
		
		\item If $n+1\leq m+k\leq 2n$, then $m+k-(2n+1)\in\{-n,\ldots,-1\}$. Moreover, $\theta_{m+k}=\emptyset$, since its domain is empty by assumption. Therefore,
		$$\eta_{\overline{m}}\circ\eta_{\overline{k}}=\theta_m\circ\theta_k\subseteq\theta_{m+k}=\emptyset\subseteq \theta_{m+k-(2n+1)}=\eta_{\overline{m+k}}.  $$ 
		
		\item If $-2n\leq m+k\leq -(n+1)$, then $m+k+(2n+1)\in\{1,\ldots,n\}$. Moreover, $\theta_{m+k}=\emptyset$, since its domain is empty by assumption. Therefore,
		$$\eta_{\overline{m}}\circ\eta_{\overline{k}}=\theta_m\circ\theta_k\subseteq\theta_{m+k}=\emptyset\subseteq \theta_{m+k+(2n+1)}=\eta_{\overline{m+k}}.  $$ 
	\end{enumerate}
	
	(1)-(3) cover all possible cases. It can be easily checked that
	$$\phi\colon C_0(X)\rtimes_{\eta}\mathbb{Z}/(2n+1)\mathbb{Z}\to C_0(X)\rtimes_{\theta} \mathbb{Z}$$
	$$\phi(\sum\limits_{k=-n}^{n}f_{\overline{k}}\delta_{\overline{k}})=\sum\limits_{k=-n}^{n} f_k\delta_k, \text{ \ where \ } f_{\overline{k}}\in C_0(E_{\overline{k}}),$$
	defines a natural isomorphism between the crossed product algebras.
\end{proof}

Combining with the results of \autoref{s:FinGrp}, we get an immediate corollary,

\begin{Cor}\label{c:ResFinSupDom}
	Let $\theta$ be a partial automorphism on a \LCH second countable space $X$. If $\theta$ has finitely supported domains, then
	\[\dr(C_0(X)\rtimes_{\hat{\theta}}\mathbb{Z})\leq \dim(X). \]
\end{Cor}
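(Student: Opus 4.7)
The plan is to combine Claim \ref{cl:FinSupDom} with the finite-group result from the previous section in a direct way. Since $\theta$ has finitely supported domains, there exists $n\in\mathbb{N}$ with $D_{n+1}=\emptyset$; by \autoref{r:IncDom} this automatically forces $D_{-(n+1)}=\emptyset$ as well, so all nontrivial domains lie within the indices $\{-n,\ldots,n\}$. This is precisely the situation covered by \autoref{cl:FinSupDom}.

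First I would invoke \autoref{cl:FinSupDom} to produce a partial action $\eta$ of the finite cyclic group $\mathbb{Z}/(2n+1)\mathbb{Z}$ on $X$ together with a natural $*$-isomorphism
\[
C_0(X)\rtimes_{\hat\theta}\mathbb{Z}\;\cong\; C_0(X)\rtimes_{\eta}\mathbb{Z}/(2n+1)\mathbb{Z}.
\]
Decomposition rank is invariant under $*$-isomorphism, so it suffices to bound the decomposition rank of the right hand side.

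Next I would apply \autoref{c:ResFinGrp} to the partial action $\eta$ of the finite cyclic group $\mathbb{Z}/(2n+1)\mathbb{Z}$ on the \LCH second countable space $X$. This immediately yields
\[
\dr\bigl(C_0(X)\rtimes_{\eta}\mathbb{Z}/(2n+1)\mathbb{Z}\bigr)\leq \dim(X).
\]
Chaining the two inequalities gives the desired bound $\dr(C_0(X)\rtimes_{\hat\theta}\mathbb{Z})\leq\dim(X)$. There is essentially no obstacle here, as all the heavy lifting was done in \autoref{cl:FinSupDom} (packaging the partial $\mathbb{Z}$-action as a partial action of a finite cyclic group, which is legal precisely because the empty composition of maps beyond index $n$ makes the partial action relations trivially satisfied modulo $2n+1$) and in \autoref{c:ResFinGrp} (reduction from locally compact to compact via one-point compactification plus the globalizable decomposition of \autoref{t:ResCptFinGrp}); the corollary is simply their concatenation.
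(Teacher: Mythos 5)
Your proof is correct and is essentially the paper's own argument: the paper derives this corollary immediately by combining \autoref{cl:FinSupDom} with \autoref{c:ResFinGrp}, exactly as you do. (Only a cosmetic quibble: $D_{-(n+1)}=\emptyset$ follows because $\theta_{n+1}$ is a homeomorphism from $D_{-(n+1)}$ onto $D_{n+1}$, rather than from \autoref{r:IncDom}, but this does not affect the argument.)
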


Next, we apply our results to partial automorphisms with uniformly bounded domains (defined below) by showing that they can be realized as an extension of a periodic global action by a partial action of a finite cyclic group. 

\begin{Def}\label{d:UniBndOrb}
	Let $\theta$ be a partial action of a discrete group $G$ on a space $X$. We say that $\theta$ has \textit{uniformly bounded orbits} if there exists $N\in\mathbb{N}$ such that for every $x\in X$: 
	$$\left| \mathrm{Orb}_{\theta}(x) \right|= \left|\{\theta_g(x): x\in D_{g^{-1}} \}\right|\leq N.$$
\end{Def}

We remark that global actions of $\mathbb{Z}$ have bounded orbits if and only if they are periodic.

\begin{Obs}\label{o:UniBndDom}
	Let $\theta$ be a partial automorphism on a space $X$. If $\theta$ has uniformly bounded orbits, then there exists $m\in\mathbb{N}$ such that
	$$D_n=D_m \text{ for all } |n|\geq m$$.
\end{Obs}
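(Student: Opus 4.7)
The plan is to interpret the orbit bound as a pigeonhole constraint forcing every point reached by deep iteration to lie on a finite cycle, and then to identify the eventual common value of the $D_n$'s with the (intrinsic, sign-free) set of periodic points of $\theta$. Unpacking \autoref{d:ParAut}, one has $D_n=\mathrm{Im}(\theta^n)$ for $n\geq 0$ and $D_{-n}=\mathrm{Dom}(\theta^n)$ for $n\geq 0$. I would set $m=N$, where $N\in\mathbb{N}$ is the uniform orbit bound provided by \autoref{d:UniBndOrb}.

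The key step is a pigeonhole argument on one side. Fix $n\geq N$ and suppose $z\in D_{-n}$. Then $z,\theta(z),\ldots,\theta^n(z)$ are all defined, yielding $n+1>N$ elements of the orbit of $z$. Two of them must coincide, say $\theta^i(z)=\theta^j(z)$ with $0\leq i<j\leq n$. Since $D_j\subseteq D_i$ by \autoref{r:IncDom}, both points lie in $D_i$, so the partial-action composition law permits applying $\theta^{-i}$ on both sides and gives $z=\theta^{j-i}(z)$ with $j-i\geq 1$; in particular $z$ is periodic. Conversely, a periodic point admits every iterate $\theta^k$, so lies in every $D_{-n}$. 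Hence $D_{-n}$ coincides with the set $P$ of periodic points of $\theta$, for every $n\geq N$.

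A symmetric argument — most cleanly phrased by running the same reasoning on the inverse partial automorphism $\theta^{-1}\colon V\to U$, whose role swaps $D_n$ with $D_{-n}$ while leaving orbits and periodic points unchanged — produces $D_n=P$ for every $n\geq N$. Combining, $D_n=P=D_m$ for every $|n|\geq m$, which is the stated conclusion.

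I do not anticipate a serious obstacle: the argument is purely combinatorial once the right invariant ($P$) is isolated. The one delicate point is bookkeeping: one must keep straight which direction of iteration is encoded by $D_n$ versus $D_{-n}$, and ensure that the positive and negative chains stabilize to the \emph{same} subset so that a single $m$ works for both signs. Characterising the common eventual value as the sign-free set of periodic points makes this symmetry transparent.
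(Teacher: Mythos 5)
Your proof is correct and follows essentially the same route as the paper: both rest on the pigeonhole observation that a point admitting more than $N$ consecutive forward iterates must be periodic, combined with the nesting of the domains from \autoref{r:IncDom}. The only differences are cosmetic: you take $m=N$ and identify the stabilized domain with the set of periodic points (handling the positive side via the inverse), whereas the paper takes $m=N!$ in order to obtain the stronger fact that $\theta_{N!}=\id$ on $D_{-N!}$, which it reuses in \autoref{c:ResUniBnd} to see that the restriction of $\theta$ to $D_m$ is a global periodic action.
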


\begin{proof}
	Let $N$ be such that for every $x\in X$, $\left|\{\theta_n(x)\colon x\in D_{-n} \}\right|\leq N$. We claim that $\theta_{N!}=\id_{D_{-N!}}$.  In particular, $D_{N!}=D_{-N!}$.\\ 
	Let $x\in D_{-N!}$. Since $N\leq N!$, \autoref{r:IncDom} implies that $x\in D_{-N}$. Now, $$\{x,\theta_1(x),\ldots,\theta_N(x)\}\subseteq \mathrm{Orb}_{\theta}(x).$$
	By assumption, there exist $l,j\in \{0,\ldots,N\}$ such that $l<j$ and $\theta_j(x)=\theta_l(x)$. Therefore, $\theta_{j-l}(x)=x$. In other words, there exists $p\in \{1,\ldots,N\}$ so that $\theta_p(x)=x$. \\
	The claim follows, since $\theta_{N!}(x)=\underbrace{\theta_p\circ\theta_p\circ\ldots\circ \theta_p}_{\frac{N!}{p}\in\mathbb{N}}(x)=x.$\\
	Therefore, for all $k\in\mathbb{N}$, $D_{-kN!}=\mathrm{Dom}(\theta_{kN!})=\mathrm{Dom}(\theta_{N!}\circ\ldots\circ \theta_{N!})=D_{-N!}$. Similarly, by considering the ranges, $D_{kN!}=D_{N!}$ for all $k\in\mathbb{N}$.\\
	\autoref{r:IncDom} implies that $D_n=D_{N!}$ for all $n\in \mathbb{Z}$ such that $\left|n\right|\geq N!$.
\end{proof}

\begin{Cor}\label{c:ResUniBnd}
	Let $\theta$ be a partial automorphism on a \LCH second-countable space $X$, with uniformly bounded orbits. Then 
	$$\dr(C_0(X)\rtimes_{\theta} \mathbb{Z})\leq \dim(X)+1.$$
\end{Cor}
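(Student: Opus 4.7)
First I would apply \autoref{o:UniBndDom} to fix $m\in\mathbb{N}$ with $\theta_m=\id_{D_m}$ and $D_n=D_m$ for all $|n|\ge m$, and set $U:=D_m$. Using \autoref{o:DomRel} one checks that $U$ is open and $\theta$-invariant, that $\theta|_U$ is a global homeomorphism with $(\theta|_U)^m=\id_U$, and that on $X\setminus U$ the restricted partial automorphism has finitely supported domains (since $D_k\subseteq U$ whenever $|k|\ge m$). The strategy is then to realise $C_0(X)\rtimes_\theta \mathbb{Z}$ as an ideal in an extension whose ideal is a crossed product by a periodic global action and whose quotient is a crossed product with finitely supported domains, and to conclude by combining the two pieces via \autoref{l:DrExtSH}.

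To access a unital ambient algebra, pass to the one-point compactification $X^+$ and extend $\theta$ to the partial action $\theta^+$ on $X^+$ fixing the point at infinity, exactly as in the proof of \autoref{c:ResFinGrp}. Then $U$ is still open and $\theta^+$-invariant, so \autoref{p:InvSetS.e.s} produces the short exact sequence
\[
0 \to C_0(U)\rtimes_{\theta^+|_U}\mathbb{Z} \to C(X^+)\rtimes_{\theta^+}\mathbb{Z} \to C(X^+\setminus U)\rtimes_{\theta^+|_{X^+\setminus U}}\mathbb{Z} \to 0.
\]
The quotient $Q$ comes from a partial automorphism on $X^+\setminus U$ with finitely supported domains, so \autoref{c:ResFinSupDom}---together with the subhomogeneous decomposition produced in the proof of \autoref{t:ResCptFinGrp}---shows $Q$ is $(2m-1)$-subhomogeneous with $\dr(Q)\le\dim(X^+\setminus U)\le\dim(X)$.

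The core step is to bound the decomposition rank of the ideal $I:=C_0(U)\rtimes\mathbb{Z}$. Since $(\theta|_U)^m=\id_U$, the implementing unitary $u$ satisfies that $u^m$ is central in $I$, so every irreducible representation of $I$ sends $u^m$ to a scalar $\zeta\in\mathbb{T}$ and therefore factors through a fibre isomorphic (after twisting $u$ by a choice of $\zeta^{1/m}$) to the finite-group crossed product $C_0(U)\rtimes\mathbb{Z}/m\mathbb{Z}$. This makes $I$ $m$-subhomogeneous. A Mackey-type analysis identifies $\Prim_k(I)$ with (equivalence classes of) pairs consisting of a $\mathbb{Z}$-orbit of size exactly $k$ in $U$ together with a character of the stabiliser $k\mathbb{Z}\cong\mathbb{Z}$, yielding $\Prim_k(I)\cong (U_k/(\mathbb{Z}/k\mathbb{Z}))\times\mathbb{T}$ where $U_k\subseteq U$ is the locally closed set of points of minimal period $k$. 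Using \autoref{t:DimSub} together with dimension preservation under free finite-group quotients, this space has dimension at most $\dim(U)+1$, so \autoref{t:WinDrSH} yields $\dr(I)\le\dim(U)+1\le\dim(X)+1$.

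With $I$ and $Q$ both subhomogeneous of common degree $2m-1$ and $C(X^+)\rtimes_{\theta^+}\mathbb{Z}$ separable and unital, \autoref{l:DrExtSH} gives $\dr(C(X^+)\rtimes_{\theta^+}\mathbb{Z})=\max(\dr(I),\dr(Q))\le\dim(X)+1$. Since $X$ is open and $\theta^+$-invariant in $X^+$, \autoref{p:InvSetS.e.s} realises $C_0(X)\rtimes_\theta\mathbb{Z}$ as an ideal of $C(X^+)\rtimes_{\theta^+}\mathbb{Z}$, and \autoref{p:PerPropDimNucDr}(3) then delivers the claimed bound. The main obstacle is the bound on $I$: a direct appeal to \autoref{t:HirWu} would only yield a polynomial in $\dim(U)$, so one really needs the subhomogeneous primitive-ideal analysis to keep the estimate linear in $\dim(X)$.
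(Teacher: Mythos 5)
Your proposal is correct and follows essentially the same route as the paper: reduce to the compact case via the one-point compactification trick of \autoref{c:ResFinGrp}, split off the open invariant set $D_m$ to obtain an extension whose quotient has finitely supported domains (handled through \autoref{cl:FinSupDom} and \autoref{t:ResCptFinGrp}) and whose ideal is a periodic global crossed product, and combine the two via \autoref{l:DrExtSH}. The only difference is that for the bound $\dr(C_0(D_m)\rtimes\mathbb{Z})\leq\dim(X)+1$ the paper simply cites \cite[Proposition~2.1]{HW}, whereas you sketch that argument yourself (subhomogeneity plus a $\Prim_k$ analysis and \autoref{t:WinDrSH}), which is essentially the content of the cited result.
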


\begin{proof}
	Following the same trick done in \autoref{c:ResFinGrp}, we may assume without loss of generality that $X$ is compact.
	Using \autoref{o:UniBndDom}, we find $m$ such that the domains of $\theta$ are of the following form,\\
	$$\ldots=D_m=D_m=\ldots=D_m\subseteq D_{m-1}\subseteq\ldots\subseteq D_1\subseteq X;$$
	and
	$$\ldots=D_m=D_m=\ldots=D_m\subseteq D_{-(m-1)}\subseteq\ldots\subseteq D_{-1}\subseteq X.$$
	As $D_m$ is an open invariant subset, we have a short exact sequence
	$$0\to C_0(D_m)\rtimes \mathbb{Z}\to C(X)\rtimes \mathbb{Z}\to C(X\setminus D_m)\rtimes \mathbb{Z}\to 0.$$
	
	The proof of \autoref{o:UniBndDom} shows that the restriction of $\theta$ to $D_m$ is a global periodic action.
	Thus, $C_0(D_m)\rtimes \mathbb{Z}$ is subhomogeneous, and $\dr(C_0(D_m)\rtimes \mathbb{Z})\leq \dim(X)+1$ (cf. \cite[proposition~2.1]{HW}).
	Moreover, if we denote the domains of $\theta|_{X\setminus D_m}$ by $\{E_k\}_{k\in\mathbb{Z}}$, we see that 
	$E_m=\emptyset$. So, by \autoref{c:ResFinSupDom}, $C(X\setminus D_m)\rtimes \mathbb{Z}$ is isomorphic to a crossed product by a partial action of a finite cyclic group, and therefore, it is subhomogeneous, and has decomposition rank bounded by the dimension of $X$ (see \autoref{t:ResCptFinGrp}).
	Finally, we can apply \autoref{l:DrExtSH} to get the desired bound.
\end{proof}

\section{Partial automorphisms with dense domains}\label{s:DenDom}

The main theorem of this section implies that in order to bound the nuclear dimension of a crossed product associated to a partial automorphism, we can restrict ourselves to the understanding of partial automorphisms with domains that are dense in the base space. We first need the following lemma.

\begin{lma}\label{l:InvInterDom}
	Let $\theta=(\{D_n\}_{n\in\mathbb{Z}},\{\theta_n\}_{n\in\mathbb{Z}})$ be a partial automorphism on a topological space $X$. Then
	\[Y_+\coloneqq \bigcap_{n\geq 0}\overline{D_n} \ \ \text{ and } \ \  Y_-\coloneqq \bigcap_{n\leq 0}\overline{D_n}  \]
	are closed, $\theta$-invariant subsets of $X$.
\end{lma}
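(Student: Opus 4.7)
The plan is straightforward. Closedness of $Y_+$ and $Y_-$ is immediate from the fact that each is defined as an intersection of closed sets. For $\theta$-invariance, per \autoref{d:InvSetMinParAct} it suffices (since we are dealing with a $\mathbb{Z}$-partial action generated by a partial automorphism) to verify the two conditions $\theta(Y_\pm\cap D_{-1})\subseteq Y_\pm$ and $\theta^{-1}(Y_\pm\cap D_1)\subseteq Y_\pm$.

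The key tool is \autoref{o:DomRel} combined with continuity of $\theta$ and $\theta^{-1}$ on their (open) domains. Concretely, applying the identity with $g=1$, $h=n-1$ for $n\geq 1$ yields $\theta(D_{n-1}\cap D_{-1})=D_n\cap D_1$, and in particular $\theta(D_{n-1}\cap D_{-1})\subseteq D_n$; analogously $\theta^{-1}(D_{n+1}\cap D_1)\subseteq D_n$ for $n\geq 0$. To verify $\theta(Y_+\cap D_{-1})\subseteq Y_+$, I would take $x\in Y_+\cap D_{-1}$ and, for each $n\geq 1$, choose a net $x_\lambda\to x$ with $x_\lambda\in D_{n-1}$ (possible because $x\in\overline{D_{n-1}}$). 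Since $D_{-1}$ is open and $x\in D_{-1}$, eventually $x_\lambda\in D_{n-1}\cap D_{-1}$, so $\theta(x_\lambda)\in D_n$. By continuity of $\theta$ on $D_{-1}$, $\theta(x_\lambda)\to\theta(x)$, hence $\theta(x)\in\overline{D_n}$. The case $n=0$ is trivial since $D_0=X$. The second condition is handled symmetrically using $\theta^{-1}$ and the identity $\theta^{-1}(D_{n+1}\cap D_1)\subseteq D_n$.

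For $Y_-$, I would run the same argument with the roles of positive and negative indices swapped. For $n\leq -1$, we have $D_{n-1}\subseteq D_{-1}$ automatically by \autoref{r:IncDom}, so approximating $x\in Y_-\cap D_{-1}$ by a net in $D_{n-1}$ and pushing through $\theta$ lands in $D_n\cap D_1\subseteq D_n$; taking closures gives $\theta(x)\in\overline{D_n}$. The verification of $\theta^{-1}(Y_-\cap D_1)\subseteq Y_-$ is analogous.

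There is no substantial obstacle here; the only delicate point is bookkeeping of indices and making sure that the openness of $D_{\pm 1}$ is what allows the approximating net to eventually land inside the domain of $\theta^{\pm 1}$, so that continuity can be invoked. Everything else is a direct application of \autoref{o:DomRel}.
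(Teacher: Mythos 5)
Your proposal is correct and follows essentially the same route as the paper: approximate a point of $\overline{D_{n-1}}\cap D_{-1}$ by a net in $D_{n-1}$, use openness of $D_{-1}$ so the net eventually lies in the domain of $\theta$, apply continuity together with the relation $\theta(D_{n-1}\cap D_{-1})\subseteq D_n$ from \autoref{o:DomRel}, and then treat $\theta^{-1}$ symmetrically before invoking the criterion in \autoref{d:InvSetMinParAct}. The only difference is cosmetic bookkeeping of indices (the paper proves $\theta(\overline{D_n}\cap D_{-1})\subseteq\overline{D_{n+1}}$ for all $n$ at once and then intersects), so there is nothing to add.
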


\begin{proof}
	We denote also the generating homeomorphism by $\theta:D_{-1}\to D_1$. Let $n\in\mathbb{Z}$, and let $x\in \overline{D_n}\cap D_{-1}$ be given. Choose a net $(x_\lambda)_{\lambda\in \Lambda}$ such that $x_\lambda\in D_n$ for all $\lambda\in\Lambda$, and $\lim_\lambda x_\lambda=x$. Since $D_{-1}$ is open, there exists $\lambda_0\in \Lambda$ such that $x_\lambda\in D_{-1}$ for all $\lambda\geq\lambda_0$. By continuity of $\theta$, we have
	$\theta(x)=\lim_{\lambda\geq\lambda_0}\theta(x_\lambda)$. However
	\[\theta(x_\lambda)\in \theta(D_n\cap D_{-1})\subseteq D_{n+1} , \]
	for all $\lambda\geq\lambda_0$, and thus $\theta(x)\in \overline{D_{n+1}}$. As $x$ was an arbitrary element, we get that $\theta(\overline{D_n}\cap D_{-1})\subseteq \overline{D_{n+1}}$ for all $n\in\mathbb{Z}$. Therefore,
	\[\theta(Y_+\cap D_{-1})=\bigcap_{n\geq 0}\theta(\overline{D_n}\cap D_{-1})\subseteq \bigcap_{n\geq 0} \overline{D_{n+1}}=Y_+ , \text{ and } \]
	\[ \theta(Y_-\cap D_{-1})=\bigcap_{n\leq 0}\theta(\overline{D_n}\cap D_{-1})\subseteq \bigcap_{n\leq 0} \overline{D_{n+1}}\subseteq Y_- . \]
	Similarly, $Y_+$ and $Y_{-}$ are invariant under $\theta^{-1}$. The remark in \autoref{d:InvSetMinParAct} implies that these sets are invariant for the generated partial action.
\end{proof}

\begin{Thm}\label{t:ResIntDom}
	Let $\theta=(\{D_n\}_{n\in\mathbb{Z}},\{\theta_n\}_{n\in\mathbb{Z}})$ be a partial automorphism on a \LCH second countable space $X$. Then
	\[\dimnuc(C_0(X)\rtimes_{\theta}\mathbb{Z})\leq \dimnuc(C_0\big(\bigcap_{n\in\mathbb{Z}}\overline{D_n}\big)\rtimes_{\theta}\mathbb{Z})+2\dim(X)+2. \]
\end{Thm}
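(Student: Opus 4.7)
My plan is to peel off two closed, $\theta$-invariant strata using \autoref{l:InvInterDom} and \autoref{p:InvSetS.e.s}: first $Y_+$ inside $X$, then $Y_+\cap Y_-$ inside $Y_+$. The resulting pair of short exact sequences
\[0\to C_0(X\setminus Y_+)\rtimes_\theta\mathbb{Z}\to C_0(X)\rtimes_\theta\mathbb{Z}\to C_0(Y_+)\rtimes_\theta\mathbb{Z}\to 0,\]
\[0\to C_0(Y_+\setminus Y_-)\rtimes_\theta\mathbb{Z}\to C_0(Y_+)\rtimes_\theta\mathbb{Z}\to C_0(Y_+\cap Y_-)\rtimes_\theta\mathbb{Z}\to 0,\]
combined with two applications of \autoref{p:PerPropDimNucDr}(2), give
\[\dimnuc\bigl(C_0(X)\rtimes_\theta\mathbb{Z}\bigr)\leq \dimnuc\bigl(C_0(X\setminus Y_+)\rtimes_\theta\mathbb{Z}\bigr) + \dimnuc\bigl(C_0(Y_+\setminus Y_-)\rtimes_\theta\mathbb{Z}\bigr) + \dimnuc\bigl(C_0(Y_+\cap Y_-)\rtimes_\theta\mathbb{Z}\bigr) + 2.\]
Since $Y_+\cap Y_- = \bigcap_{n\in\mathbb{Z}}\overline{D_n}$, the theorem reduces to showing that each of the two peripheral crossed products has nuclear dimension at most $\dim(X)$.

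The essential sub-claim is the following: if $\eta$ is a partial automorphism on a \LCH second countable space $Z$ satisfying $\bigcap_{n\geq 0}\overline{D_{-n}^\eta}^Z=\emptyset$ (or, symmetrically, $\bigcap_{n\geq 0}\overline{D_n^\eta}^Z=\emptyset$), then $\dimnuc(C_0(Z)\rtimes_\eta\mathbb{Z})\leq \dim(Z)$. To prove this, set $W_k := Z\setminus \overline{D_{-k}^\eta}^Z$; by hypothesis these form an increasing open cover of $Z$, so $U_k := W_k\cap D_{-1}^\eta$ is an increasing open cover of $D_{-1}^\eta$. Let $\eta^{(k)} := \eta|_{U_k}$ and consider the partial automorphism it generates on $Z$. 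A point $x$ in the iterated domain $D_{-n}^{\eta^{(k)}}$ satisfies $\eta^j(x)\in U_k\subseteq D_{-1}^\eta$ for $j=0,\ldots,n-1$, so $\eta^n(x)$ is defined and $x\in D_{-n}^\eta$; on the other hand $x\in W_k$ forces $x\notin D_{-k}^\eta$, and the monotonicity $D_{-n}^\eta\subseteq D_{-k}^\eta$ for $n\geq k$ (\autoref{r:IncDom}) forces $n< k$. Thus $\eta^{(k)}$ has finitely supported domains, so \autoref{c:ResFinSupDom} gives $\dr(C_0(Z)\rtimes_{\eta^{(k)}}\mathbb{Z})\leq \dim(Z)$, and \autoref{lma:DirectLimitPar} combined with \autoref{p:PerPropDimNucDr}(1) produces $\dimnuc(C_0(Z)\rtimes_\eta\mathbb{Z})\leq \dim(Z)$.

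To conclude, I verify the sub-claim's hypotheses on both peripheral pieces. On $Z_1 := X\setminus Y_+$, for each $n\geq 0$ the inclusion $\overline{D_n\cap Z_1}^{Z_1}\subseteq \overline{D_n}^X\cap Z_1$ gives $\bigcap_{n\geq 0}\overline{D_n\cap Z_1}^{Z_1}\subseteq Y_+\cap Z_1=\emptyset$, triggering the symmetric form of the sub-claim (equivalently, apply it to $\eta^{-1}$, whose crossed product agrees with $\eta$'s); together with \autoref{t:DimSub} this yields $\dimnuc(C_0(Z_1)\rtimes_\theta\mathbb{Z})\leq \dim(X)$. On $Z_2 := Y_+\setminus Y_-$, the analogous argument with $D_{-n}$ in place of $D_n$ gives $\bigcap_{n\geq 0}\overline{D_{-n}\cap Z_2}^{Z_2}\subseteq Y_-\cap Z_2=\emptyset$ and the same bound. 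Substituting into the displayed inequality delivers the stated estimate. The step I expect to be the most delicate is the iterated-domain calculation inside the sub-claim, where one must be careful to track how the subspace closures in $Z$ compare with the ambient closures in $X$ and to confirm that the restriction to $U_k$ truly annihilates iterates of length $\geq k$; the monotonicity of the domains of a partial automorphism (\autoref{r:IncDom}) is what makes this bookkeeping tractable.
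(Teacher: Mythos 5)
Your proposal is correct and is essentially the paper's own argument: the paper likewise peels off the two closed invariant sets of \autoref{l:InvInterDom} (just in the opposite order, removing $Y_-$ first and then forming the second set with relative closures inside $Y_-$), bounds the two peripheral ideals by exhibiting them as direct limits of partial automorphisms with finitely supported domains via \autoref{lma:DirectLimitPar} and \autoref{c:ResFinSupDom}, and applies the extension estimate of \autoref{p:PerPropDimNucDr} twice. The differences — the order of the peeling, your use of ambient closures (which makes the identification of the final quotient with $\bigcap_{n\in\mathbb{Z}}\overline{D_n}$ immediate), and your more explicit iterated-domain computation in the sub-claim — are cosmetic refinements of the same proof.
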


\begin{proof}
	We denote also the generating homeomorphism by $\theta:D_{-1}\to D_1$. Set 
	\[Y_-\coloneqq \bigcap_{n\geq 0}\overline{D_{-n}}  \]
	By \autoref{l:InvInterDom}, $Y_-$ is a closed $\theta$-invariant subset of $X$. 
	We obtain a short exact sequence of $C^*$-algebras,
	\[0\to C_0(X\setminus Y_-)\rtimes_{\theta|_{X\setminus Y_-}}\mathbb{Z}\to C_0(X)\rtimes_{\theta}\mathbb{Z}\to C_0(Y_-)\rtimes_{\theta|_{Y_-}}\mathbb{Z}\to 0.\]
	Set $E_n\coloneqq D_n\cap (X\setminus Y_-)$ for all $n\in\mathbb{Z}$. These are the domains of $\theta|_{X\setminus Y_-}$. This restriction is a partial automorphism generated by $\theta: E_{-1}\to E_1$. Now, observe that $\{E_{-1}\setminus \overline{E_{-n}}\}_{n\geq 0}$, where the closures are taken inside $E_0$, is an increasing sequence of open sets in $E_0$, such that $E_{-1}=\bigcup_{n\geq 0}E_{-1}\setminus \overline{E_{-n}}$. Indeed,
	\begin{align*}
	\bigcup_{n\geq 0}E_{-1}\setminus \overline{E_{-n}}&=E_{-1}\setminus \bigcap_{n\geq 0}\overline{E_{-n}}\\
	&=E_{-1}\setminus \left(\bigcap_{n\geq 0}\overline{D_{-n}}\cap (X\setminus Y_-)\right)\\
	&=E_{-1}\setminus (Y_-\cap (X\setminus Y_-))=E_{-1}.
	\end{align*}
	Notice that for each $n\leq 0$, the partial automorphism generated by the restriction of $\theta$ to $E_{-1}\setminus \overline{E_{n}}$ has finitely supported domains. Indeed, its $n$-th domain must be a subset of $E_{-1}\setminus \overline{E_{n}}$ (see \autoref{r:IncDom}), and also a subset of the $n$-th domain of the original $\theta: E_{-1}\to E_1$, namely $E_{n}$. By \autoref{lma:DirectLimitPar}, if we denote $\theta^{(n)}\coloneqq\theta|_{E_{-1}\setminus \overline{E_{-n}}}$ for all $n\geq 0$, we have
	\[C_0(X\setminus Y_-)\rtimes_{\theta} \mathbb{Z}=\varinjlim_n C_0(X\setminus Y_-)\rtimes_{\theta^{(n)}} \mathbb{Z}.\]
	Combining \autoref{p:PerPropDimNucDr} with \autoref{c:ResFinSupDom}, we obtain \[\dimnuc(C_0(X)\rtimes_{\theta}\mathbb{Z})\leq \dimnuc(C_0(Y_-)\rtimes_{\theta}\mathbb{Z})+\dim(X)+1.\]
	We now deal with the system induced by the restriction of $\theta$ to $Y_-$.
	Set $F_n\coloneqq D_n\cap Y_-$ for all $n\in \mathbb{Z}$. $\{F_n\}_{n\in\mathbb{Z}}$ are the domains of $\theta|_{Y_-}$. This restriction is a partial automorphism generated by $\theta: F_{-1}\to F_1$. Set
	\[Y_+\coloneqq \bigcap_{n\geq 0} \overline{F_n}, \]
	where the closures are taken inside $Y_-$. By \autoref{l:InvInterDom}, $Y_+$ is a closed invariant set.
	Thus, we obtain a short exact sequence of $C^*$-algebras:
	\[0\to C_0(Y_-\setminus Y_+)\rtimes_{\theta}\mathbb{Z}\to C_0(Y_-)\rtimes_{\theta}\mathbb{Z}\to C_0(Y_+)\rtimes_{\theta}\mathbb{Z}\to 0.\]
	Essentially the same argument as above gives that the left hand sided partial system is a direct limit of partial automorphisms with finitely supported domains, and thus
	\[\dimnuc(C_0(Y_-\setminus Y_+)\rtimes_\theta\mathbb{Z})\leq \dim(X).\]
	As $Y_+=\bigcap_{n\in\mathbb{Z}}\overline{D_n}$, we get by the estimate of nuclear dimension to extensions,
	\[\dimnuc(C_0(Y_-)\rtimes_{\theta}\mathbb{Z})\leq \dimnuc(C_0\big(\bigcap_{n\in\mathbb{Z}}\overline{D_n}\big)\rtimes_{\theta}\mathbb{Z})+ \dim(X)+1. \]
	We conclude that
	\[\dimnuc(C_0(X)\rtimes_{\theta}\mathbb{Z})\leq \dimnuc(C_0\big(\bigcap_{n\in\mathbb{Z}}\overline{D_n}\big)\rtimes_{\theta}\mathbb{Z})+2\dim(X)+2,\]
	as required.
\end{proof}

\begin{Cor}\label{c:IntCases}
	Let $\theta=(\{D_n\}_{n\in\mathbb{Z}},\{\theta_n\}_{n\in\mathbb{Z}})$ be a partial automorphism on a \LCH second countable space $X$. Then
	\begin{enumerate}
		\item If $\theta$ restricts to a global action on $\bigcap_{n\in\mathbb{Z}}\overline{D_n}$, then
		\[ \dimnuc(C_0(X)\rtimes_{\theta}\mathbb{Z})\leq 2\dim(X)^2+8\dim(X)+6.  \] 
		\item If $\dim(\bigcap_{n\in\mathbb{Z}}\overline{D_n})=0$, then
		\[ \dimnuc(C_0(X)\rtimes_{\theta}\mathbb{Z})\leq 2\dim(X)+6.  \] 
	\end{enumerate}
\end{Cor}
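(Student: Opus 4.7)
The plan is to reduce both parts directly to Theorem~\ref{t:ResIntDom}, which already does the heavy lifting by bounding $\dimnuc(C_0(X)\rtimes_\theta\mathbb{Z})$ in terms of the nuclear dimension of the crossed product over the ``core'' $Y := \bigcap_{n\in\mathbb{Z}}\overline{D_n}$, plus an additive cost of $2\dim(X)+2$. Thus in each of the two cases it remains only to give an appropriate upper bound on $\dimnuc(C_0(Y)\rtimes_\theta\mathbb{Z})$, invoking an already-proved result, and then add $2\dim(X)+2$.

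For part (1), first note that $Y$ is a closed subset of the \LCH second countable space $X$, hence is itself \LCH second countable. By hypothesis, the partial action $\theta$ restricts on $Y$ to a genuine global $\mathbb{Z}$-action, which is generated by some homeomorphism of $Y$. Hirshberg-Wu's Theorem~\ref{t:HirWu} therefore gives
\[
\dimnuc(C_0(Y)\rtimes_\theta\mathbb{Z})\leq 2\dim(Y)^2+6\dim(Y)+4.
\]
Using Theorem~\ref{t:DimSub} to estimate $\dim(Y)\leq \dim(X)$ and then combining with Theorem~\ref{t:ResIntDom}, we obtain
\[
\dimnuc(C_0(X)\rtimes_\theta\mathbb{Z})\leq 2\dim(X)^2+6\dim(X)+4+2\dim(X)+2,
\]
which gives the desired bound $2\dim(X)^2+8\dim(X)+6$.

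For part (2), again $Y$ is closed in $X$, so \LCH and second countable. The restriction of $\theta$ to $Y$ is a partial automorphism on $Y$, which now has $\dim(Y)=0$ by hypothesis. Theorem~\ref{t:ResZeroDim} then applies to give
\[
\dimnuc(C_0(Y)\rtimes_\theta\mathbb{Z})\leq 4,
\]
and Theorem~\ref{t:ResIntDom} yields
\[
\dimnuc(C_0(X)\rtimes_\theta\mathbb{Z})\leq 4+2\dim(X)+2=2\dim(X)+6.
\]
There is no real obstacle here: the substance of the argument lies entirely in Theorem~\ref{t:ResIntDom} together with the two already-established estimates, so the corollary reduces to routine bookkeeping. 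The only point worth a sanity check is that $Y$, being the intersection of closures of the domains, is closed so that the restricted partial system lives on a well-behaved (\LCH second countable) space, and that the hypothesis in (1) is genuinely stronger than $Y\subseteq\overline{D_n}$: it requires $Y\subseteq D_n$ for all $n$ with $\theta_n(Y)\subseteq Y$, which is what permits the appeal to Theorem~\ref{t:HirWu}.
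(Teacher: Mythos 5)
Your proposal is correct and is essentially identical to the paper's argument: the paper likewise derives both bounds by combining \autoref{t:ResIntDom} with \autoref{t:HirWu} (plus $\dim(Y)\leq\dim(X)$) in case (1) and with \autoref{t:ResZeroDim} in case (2). The arithmetic and the routine checks on $Y=\bigcap_{n\in\mathbb{Z}}\overline{D_n}$ are exactly as in the paper, so there is nothing to add.
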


\begin{proof}
	These inequalities follow immediately from \autoref{t:ResIntDom}, \autoref{t:HirWu}, and \autoref{t:ResZeroDim}.
\end{proof}

Next, consider a concrete example from \cite{Exel2}.
Denote by $\mathbb{N}^+=\mathbb{N}\cup\{\infty\}$ the one-point compactification of $\mathbb{N}$. Let $\theta$ be the partial automorphism on $\mathbb{N}^+$ generated by the homeomorphism $\theta: \mathbb{N}^+\setminus \{1\}\to \mathbb{N}^+$, given by $\theta(n)=n-1$ for $n\in\mathbb{N}\setminus\{1\}$, and $\theta(\infty)=\infty$. The domains of the induced topological partial action of $\mathbb{Z}$ on $\mathbb{N}^+$ are given by
\[ D_n=\begin{cases}
\mathbb{N}^+ &,\text{ if } n\geq0\\
\mathbb{N}^+\setminus \{1,2,\ldots, |n|\} &,\text{ if } n<0
\end{cases}  \]
One can show that
$C(\mathbb{N}^+)\rtimes_{\theta}\mathbb{Z}=\mathcal{T}$, where $\mathcal{T}$ denotes the Toeplitz algebra. In this case, both conditions of \autoref{c:IntCases} are trivially satisfied. Moreover, note that the restriction of $\theta$ to $\bigcap_{n\in\mathbb{Z}}\overline{D_n}=\{\infty\}$ is the trivial action, and one can show that $C_0(\mathbb{N})\rtimes_{\theta}\mathbb{Z}\cong K(l^2(\mathbb{N}))$. Therefore, the extension
\[0\to C_0\big(\mathbb{N}^+\setminus \bigcap_{n\in\mathbb{Z}}\overline{D_n}\big)\rtimes_{\theta}\mathbb{Z}\to C(\mathbb{N}^+)\rtimes_{\theta}\mathbb{Z}\to C\big(\bigcap_{n\in\mathbb{Z}}\overline{D_n}\big)\rtimes_{\theta}\mathbb{Z}\to 0 \ , \]
considered in the proof of \autoref{t:ResIntDom}, is exactly the Toeplitz extension
\[0\to K(l^2(\mathbb{N}))\to \mathcal{T}\to C(\mathbb{T})\to 0. \]
However, the bound for the nuclear dimension of $\mathcal{T}$ obtained this way is not optimal; it was shown in \cite{BW} that $\dimnuc(\mathcal{T})=1$.

\section{Minimal partial automorphisms}\label{s:MinParAut}

In this section we bound the nuclear dimension of crossed products associated to minimal partial automorphisms. We first point out that, using different methods, a better bound can be achieved when dealing with extendable homeomorphisms.

\begin{Thm}\label{t:ResExtMinParAut}
	Let $X$ be an infinite, compact, metrizable, finite-dimensional space with a minimal partial action generated by a partial automorphism $\theta:D_{-1}\to D_1$, with $D_1\subsetneq X$. Assume that $\theta$ extends to a self-homeomorphism $\alpha: X\to X$. Then
	\[\dr(C(X)\rtimes_{\theta}\mathbb{Z})\leq \dim(X). \]
\end{Thm}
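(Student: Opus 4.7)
My strategy is to realize $A := C(X)\rtimes_\theta\mathbb{Z}$ as an orbit-breaking subalgebra of the global crossed product $B := C(X)\rtimes_\alpha\mathbb{Z}$, exhibit a recursive subhomogeneous (RSH) structure on it, and invoke \autoref{t:WinDrSH}. Let $u\in B$ denote the canonical unitary implementing $\alpha$, and set $Y := X\setminus D_1$, which is nonempty by hypothesis. I will identify $A$ with the subalgebra $A_Y := C^*(C(X),\, u\cdot C_0(D_{-1})) \subseteq B$ via $f\delta_n \mapsto fu^n$ for $f\in C_0(D_n)$. That this defines a $\ast$-homomorphism on the algebraic partial crossed product follows from the relation $\theta^n = \alpha^n|_{D_{-n}}$ together with the Fell-bundle formalism of \cite{Exel}; faithfulness on the full completion follows because $B$ carries a dual circle action that restricts to the dual action on $A_Y$, providing a faithful conditional expectation onto $C(X)$ matching the canonical one on $A$.

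Next, I will exhibit an RSH decomposition of $A_Y$ by stratifying $Y$ according to first-return time under $\alpha$. For each $n\ge 1$ put $Y_n := \{y\in Y : \alpha^k(y)\in D_1 \text{ for } 1\le k<n,\ \alpha^n(y)\in Y\}$, and let $Y_\infty$ denote the set of $y\in Y$ whose forward $\alpha$-orbit never re-enters $Y$. The minimality of the partial action together with $Y\neq\emptyset$ is used to control $Y_\infty$: any $y\in Y_\infty$ yields an $\alpha$-forward orbit entirely contained in $D_1$, whose closure is an $\alpha$-invariant, hence $\theta$-invariant, closed subset of $X$ (invariance passes to the partial action in the spirit of \autoref{l:InvInterDom}); the minimality of $\theta$ then forces a controlled dichotomy, so that $Y_\infty$ contributes only through an inductive-limit piece. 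Each finite-$n$ stratum contributes a building block of the form $M_n(C(\overline{Y_n}))$, with closure taken in $X$, and these are glued recursively in the style of classical orbit-breaking constructions for minimal Cantor systems, realizing $A_Y$ as an inductive limit of RSH $C^*$-algebras whose topological fibers all embed into $X$.

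Finally, in each finite stage of the inductive limit, $\Prim_k$ is a locally closed subspace of $X$, so \autoref{t:DimSub} gives $\dim\Prim_k\le \dim(X)$, and \autoref{t:WinDrSH} yields decomposition rank at most $\dim(X)$. Passing to the inductive limit via \autoref{p:PerPropDimNucDr}(1) yields $\dr(A) = \dr(A_Y) \le \dim(X)$, as desired. The main obstacle is the rigorous execution of the second step: because $\alpha$ is not assumed minimal, the partial-action minimality of $\theta$ must substitute for global minimality in controlling the infinite-return set $Y_\infty$ and in ensuring the RSH gluing maps are well-defined. This is precisely where the partial rather than global dynamical structure plays its decisive role, and is what distinguishes the ``different methods'' promised for the extendable case from the general treatment that will use \autoref{s:DenDom}.
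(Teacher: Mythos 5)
Your first step (identifying $C(X)\rtimes_\theta\mathbb{Z}$ with the orbit-breaking subalgebra $A_Y=C^*\big(C(X),C_0(D_1)u\big)\subseteq C(X)\rtimes_\alpha\mathbb{Z}$ for $Y=X\setminus D_1$) is correct and is exactly what the paper does. But there is a genuine gap in the dynamical part. First, you overlooked the key observation that $\alpha$ is \emph{automatically minimal}: any closed $\alpha$-invariant set $Z$ satisfies $\theta_n(D_{-n}\cap Z)=\alpha^n(D_{-n}\cap Z)\subseteq Z$ for all $n$, so $Z$ is $\theta$-invariant and hence trivial by minimality of $\theta$. Thus there is nothing to ``substitute'' for global minimality; conversely, the substitute argument you sketch is not a proof -- the closure of a forward $\alpha$-orbit need not be $\alpha$-invariant (backward invariance can fail), nor $\theta$-invariant (the $\theta^{-1}$ condition is the problematic one), and the promised ``controlled dichotomy'' for $Y_\infty$ is never established; you flag this yourself as the unresolved main obstacle.

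Second, and more seriously, even granting minimality of $\alpha$, stratifying $Y=X\setminus D_1$ by first-return times cannot produce the RSH building blocks you claim, because $Y$ may have empty interior: then return times to $Y$ are unbounded and $Y_\infty$ can be all of $Y$. For instance, take $\theta$ to be an irrational rotation of the circle restricted to the complement of one point; the partial action is minimal, $Y$ is a single point which never returns exactly, every $Y_n$ is empty, and $A_Y$ is an A$\mathbb{T}$-algebra -- a genuine inductive limit, not an RSH algebra built from strata $M_n(C(\overline{Y_n}))$. The missing idea, which is how the paper proceeds, is to approximate $Y$ from outside by a decreasing sequence of closed sets $Y_m$ with \emph{nonempty interior} and $Y=\bigcap_m Y_m$: minimality of $\alpha$ plus compactness bounds the return times to $Y_m$, so by \cite[Section~3]{LP} each $A_{Y_m}$ is recursive subhomogeneous with topological dimension at most $\dim(X)$, whence $\dr(A_{Y_m})\leq\dim(X)$ by \autoref{t:WinDrSH} (your $\mathrm{Prim}_k$ estimate via \autoref{t:DimSub} is the right spirit, but it is applied to the $A_{Y_m}$, not to a return-time decomposition of $A_Y$ itself). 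Since $A_Y=\varinjlim_m A_{Y_m}$, \autoref{p:PerPropDimNucDr} gives the bound. So your plan needs both the minimality observation and the outer approximation by sets with nonempty interior; as written, the central RSH step does not go through.
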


\begin{proof}
	Notice that $\alpha$ is a minimal homeomorphism on $X$, since every $\alpha$-invariant set, $Z\subseteq X$, is also $\theta$-invariant. Indeed, given $n\in\mathbb{Z}$, we have
	\[\theta_{n}(D_{-n}\cap Z)=\alpha^{n}(D_{-n}\cap Z)\subseteq \alpha^{n}(Z)\subseteq Z. \]
	
	For a closed subset $Y\subseteq X$, we denote 
	$$A_Y\coloneqq C^*(C(X), C_0(X\setminus Y)u)\subseteq C(X)\rtimes_{\alpha} \mathbb{Z}.$$ 
	Let $Y\coloneqq X\setminus D_1$. By assumption, $Y$ is a non-empty closed subset in $X$. Thus, there exists a decreasing sequence $(Y_m)_{m=1}^{\infty}$ of closed subsets in $X$ such that $\mathrm{int}(Y_m)\neq \emptyset$ and $Y=\bigcap_{m=1}^{\infty} Y_m$. Combining \cite[Section~3]{LP} with \cite[Theorem~1.6]{DrSH}, it follows that each $A_{Y_m}$ is a recursive subhomogeneous $C^*$-algebra with $\dr(A_{Y_m})\leq \dim(X)$. Observe that
	$$C(X)\rtimes_{\theta}\mathbb{Z}=A_Y=\varinjlim_m A_{Y_m}. $$
	The result now follows by \autoref{p:PerPropDimNucDr}(3).
\end{proof}

\begin{Rmk}
	The assumption that $D_1\subsetneq X$ (or, $D_{-1}\subsetneq X$) in \autoref{t:ResExtMinParAut} is necessary. For example, if $\theta$ is a global action on a Cantor set $X$, then 
	\[\dr(C(X)\rtimes_{\theta}\mathbb{Z})\leq \dim(X)=0\]
	would imply that a crossed product of the unital $AF$-algebra, $C(X)$, by $\mathbb{Z}$ is an $AF$ algebra. This leads to a contradiction, see \cite[Exercise~10.11.1]{Black}.
\end{Rmk}

We move to the general case, starting with an easy observation.

\begin{Obs}\label{o:ClosInvSets}
	Let $\theta: D_{-1}\to D_1$ be a partial automorphism on a space $X$. Then the closure of any $\theta$-invariant set is again $\theta$-invariant.
\end{Obs}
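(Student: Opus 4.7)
The plan is to reduce to the characterization of invariance recorded in \autoref{d:InvSetMinParAct}: a subset $Y\subseteq X$ is $\theta$-invariant for the $\mathbb{Z}$-partial action generated by $\theta:D_{-1}\to D_1$ if and only if $\theta(Y\cap D_{-1})\subseteq Y$ and $\theta^{-1}(Y\cap D_1)\subseteq Y$. So it suffices to verify these two inclusions with $Y$ replaced by $\overline{Y}$, and by symmetry I only need to argue one of them, say $\theta(\overline{Y}\cap D_{-1})\subseteq \overline{Y}$.

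The key step is a net-continuity argument of exactly the flavor used in the proof of \autoref{l:InvInterDom}. Given $x\in \overline{Y}\cap D_{-1}$, pick a net $(x_\lambda)_{\lambda\in\Lambda}$ in $Y$ with $x_\lambda\to x$. Since $D_{-1}$ is open and contains $x$, there is some $\lambda_0$ such that $x_\lambda\in D_{-1}$ for all $\lambda\geq\lambda_0$; hence $x_\lambda\in Y\cap D_{-1}$ eventually. Continuity of $\theta:D_{-1}\to D_1$ then yields
\[ \theta(x)=\lim_{\lambda\geq\lambda_0}\theta(x_\lambda)\in \overline{\theta(Y\cap D_{-1})}\subseteq \overline{Y}, \]
using $\theta$-invariance of $Y$ in the last containment. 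This gives the inclusion $\theta(\overline{Y}\cap D_{-1})\subseteq \overline{Y}$, and the same argument applied to $\theta^{-1}:D_1\to D_{-1}$ gives $\theta^{-1}(\overline{Y}\cap D_1)\subseteq \overline{Y}$.

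There is no real obstacle; the only thing one has to be slightly careful about is that $Y$ being $\theta$-invariant means $\theta(Y\cap D_{-1})\subseteq Y$ (not that $Y$ is a union of $\theta$-orbits in any stronger sense), so one cannot simply push $\overline{Y}$ forward and hope to land in $Y$ itself. The openness of $D_{-1}$ and $D_1$ is what makes the net argument go through, and this already appears as a subroutine in \autoref{l:InvInterDom}; the present observation can essentially be seen as the special case one extracts from that proof.
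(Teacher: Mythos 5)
Your argument is correct: the reduction to the two inclusions $\theta(\overline{Y}\cap D_{-1})\subseteq \overline{Y}$ and $\theta^{-1}(\overline{Y}\cap D_1)\subseteq \overline{Y}$ via the remark in \autoref{d:InvSetMinParAct}, followed by the net-continuity argument exploiting openness of $D_{\pm 1}$, is exactly the mechanism the paper uses in the proof of \autoref{l:InvInterDom}, and the paper itself leaves \autoref{o:ClosInvSets} unproved as an easy observation. Your write-up fills that gap correctly and in the same spirit, so there is nothing to object to.
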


Since the complement of a $\theta$-invariant set is always $\theta$-invariant, a partial automorphism is minimal if and only if $X$ and $\emptyset$ are the only closed $\theta$-invariant subsets.

\begin{lma}\label{l:ClosOfDomCont}
	Let $\theta=(\{D_n\}_{n\in\mathbb{Z}}, \{\theta_n\}_{n\in\mathbb{Z}})$ be a partial action of $\mathbb{Z}$ on a topological space $X$, generated by a partial homeomorphism $\theta: D_{-1}\to D_1$. Let $E_{-1}$ be an open subset of $D_{-1}$, and $\eta\coloneqq \theta|_{E_{-1}}$. Denote by $\eta=(\{E_n\}_{n\in\mathbb{Z}}, \{\eta_n\}_{n\in\mathbb{Z}})$ the generated partial action. Assume that $\overline{E_{-1}}\subseteq D_{-1}$ and $\overline{E_1}\subseteq D_1$. Then $\overline{E_n}\subseteq D_n$, for all $n\in\mathbb{Z}$.
\end{lma}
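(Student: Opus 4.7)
The plan is to proceed by induction on $|n|$, with the positive and negative cases handled by a symmetric argument. The base cases $n=0$ (trivial, since $E_0=D_0=X$) and $|n|=1$ (by hypothesis) are immediate.

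For the inductive step with $n\geq 1$, I would first apply \autoref{o:DomRel} to the restricted partial action $\eta$ in order to write $E_{n+1}=\eta_1(E_n\cap E_{-1})=\theta(E_n\cap E_{-1})$. In particular this shows $E_{n+1}\subseteq E_1$, so $\overline{E_{n+1}}\subseteq \overline{E_1}\subseteq D_1$ by hypothesis. Fix now $x\in \overline{E_{n+1}}$ and a net $x_\lambda\in E_{n+1}$ converging to $x$. Since $x\in D_1$ and $\theta^{-1}\colon D_1\to D_{-1}$ is continuous, the preimages $\theta^{-1}(x_\lambda)$, which lie in $E_n\cap E_{-1}$, converge to $\theta^{-1}(x)$. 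By the inductive hypothesis together with the base case,
\[\theta^{-1}(x)\in \overline{E_n\cap E_{-1}}\subseteq \overline{E_n}\cap \overline{E_{-1}}\subseteq D_n\cap D_{-1}.\]
Applying \autoref{o:DomRel} once more to the original partial action $\theta$ gives $\theta(D_n\cap D_{-1})=D_{n+1}\cap D_1\subseteq D_{n+1}$, so $x\in D_{n+1}$ as desired. The negative case is entirely analogous, using the identity $E_{-(n+1)}=\theta^{-1}(E_{-n}\cap E_1)$, the inclusion $E_{-(n+1)}\subseteq E_{-1}$, and the hypothesis $\overline{E_{-1}}\subseteq D_{-1}$.

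I do not expect any serious obstacle in this argument; it is essentially bookkeeping with closures combined with the continuity of $\theta^{\pm 1}$. The slightly delicate point is precisely the use of continuity at the limit point, which is why the hypotheses $\overline{E_{\pm 1}}\subseteq D_{\pm 1}$ are imposed: they guarantee that the limit lies in the open domain on which $\theta^{\mp 1}$ is defined and continuous, so that limits can be passed through under the partial inverse.
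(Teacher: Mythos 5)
Your proof is correct and follows essentially the same route as the paper: an induction on $|n|$ in which both hypotheses $\overline{E_{-1}}\subseteq D_{-1}$ and $\overline{E_1}\subseteq D_1$ are used to push closures through the partial homeomorphism, with the negative case handled by symmetry. The only (cosmetic) difference is that the paper phrases the inductive step as the single identity $\overline{\eta(E_{n-1}\cap E_{-1})}=\theta\bigl(\overline{E_{n-1}\cap E_{-1}}\bigr)$, whereas you verify the same containment pointwise with nets and the continuity of $\theta^{-1}$.
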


\begin{proof}
	It is enough to show $\overline{E_n}\subseteq D_n$, for all $n\geq 0$. We prove the claim inductively. Let $n\geq 2$ be given, and assume that the claim holds for $n-1$. Then
	\[\overline{E_n}=\overline{\eta(E_{n-1}\cap E_{-1})}=\theta(\overline{E_{n-1}\cap E_{-1}})\subseteq \theta(\overline{E_{n-1}}\cap \overline{E_{-1}})\subseteq\theta(D_{n-1}\cap D_{-1})=D_n, \]
	where at the second step we use that $\overline{E_{-1}}\subseteq D_{-1}$ and $\overline{\theta(E_{-1})}\subseteq D_1$, and at the fourth step we use the inductive assumption.
\end{proof}

\begin{Thm}\label{t:MinimalAut}
	Let $\theta=(\{D_n\}_{n\in\mathbb{Z}}, \{\theta_n \}_{n\in\mathbb{Z}})$ be a minimal partial automorphism on a \LCH second countable space $X$, with $D_{-1}\subsetneq X$. Then
	\[\dimnuc(C_0(X)\rtimes_{\theta} \mathbb{Z})\leq 2\dim(X)+6. \]
\end{Thm}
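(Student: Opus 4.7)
The plan is to combine the reduction of \autoref{t:ResIntDom} with the direct-limit technique of \autoref{lma:DirectLimitPar}, using minimality to control the relevant intersection sets. First I would apply \autoref{l:InvInterDom} to obtain that the sets $Y_-\coloneqq\bigcap_{n\geq 0}\overline{D_{-n}}$ and $Y_+\coloneqq\bigcap_{n\geq 0}\overline{D_n}$ are closed $\theta$-invariant subsets of $X$. By minimality (together with \autoref{o:ClosInvSets}), each of $Y_\pm$ equals either $\emptyset$ or $X$. If either is empty, then $\bigcap_{n\in\mathbb{Z}}\overline{D_n}\subseteq Y_-\cap Y_+=\emptyset$, and \autoref{t:ResIntDom} directly yields the sharper estimate $\dimnuc(C_0(X)\rtimes_{\theta}\mathbb{Z})\leq 2\dim(X)+2$. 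Thus the essential case is $Y_-=Y_+=X$, i.e.\ every $D_n$ is dense in $X$.

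For this dense-domain case, I would approximate $\theta$ by sub-partial-automorphisms. Using that $X$ is \LCH and second countable, I would build an increasing sequence $(E_{-1}^{(k)})_{k\geq 1}$ of open subsets of $D_{-1}$ with $\bigcup_k E_{-1}^{(k)}=D_{-1}$, $\overline{E_{-1}^{(k)}}\subseteq D_{-1}$, and $\overline{\theta(E_{-1}^{(k)})}\subseteq D_1$ for every $k$. Setting $\eta_k\coloneqq\theta|_{E_{-1}^{(k)}}$ and invoking \autoref{l:ClosOfDomCont}, the successive domains $E_n^{(k)}$ of $\eta_k$ satisfy $\overline{E_n^{(k)}}\subseteq D_n$ for all $n\in\mathbb{Z}$. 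By \autoref{lma:DirectLimitPar}, $C_0(X)\rtimes_\theta\mathbb{Z}=\varinjlim_k C_0(X)\rtimes_{\eta_k}\mathbb{Z}$, so by \autoref{p:PerPropDimNucDr}(1) it suffices to bound each $\dimnuc(C_0(X)\rtimes_{\eta_k}\mathbb{Z})$ by $2\dim(X)+6$ uniformly in $k$.

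To control each $\eta_k$, I would apply \autoref{t:ResIntDom} once more to get
\[\dimnuc(C_0(X)\rtimes_{\eta_k}\mathbb{Z})\leq \dimnuc\bigl(C_0(Y^{(k)})\rtimes_{\eta_k}\mathbb{Z}\bigr)+2\dim(X)+2,\]
where $Y^{(k)}\coloneqq\bigcap_{n\in\mathbb{Z}}\overline{E_n^{(k)}}$, and then show that $Y^{(k)}$ is empty (or at worst zero-dimensional). The idea is that $X\setminus\overline{E_{-1}^{(k)}}$ is a nonempty open subset of $X$; since $\theta$ is minimal, no point of $X$ can have its entire $\theta$-orbit avoiding this open set. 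Combined with \autoref{l:ClosOfDomCont}, this should imply that for every $x\in X$ there is some $n$ with $x\notin\overline{E_n^{(k)}}$, forcing $Y^{(k)}=\emptyset$. With $Y^{(k)}$ empty the intersection algebra vanishes and we obtain $\dimnuc(C_0(X)\rtimes_{\eta_k}\mathbb{Z})\leq 2\dim(X)+2\leq 2\dim(X)+6$. Should the argument yield only $\dim Y^{(k)}=0$ rather than emptiness, one may still invoke \autoref{t:ResZeroDim} (as in \autoref{c:IntCases}(2)) to bound the intersection crossed product by $4$, giving exactly the stated $2\dim(X)+6$.

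The principal obstacle is the covering argument described in the previous paragraph: converting the minimality of $\theta$ on $X$ into emptiness (or zero-dimensionality) of $\bigcap_n\overline{E_n^{(k)}}$, using only the inclusions $\overline{E_n^{(k)}}\subseteq D_n$ provided by \autoref{l:ClosOfDomCont}. Making this precise requires carefully tracking how the iterates of $\theta$ move points into and out of $\overline{E_{-1}^{(k)}}$, and is likely to be subtler than the corresponding orbit-covering argument available for minimal global homeomorphisms, because the forward and backward iterates of $\theta$ are only partially defined on $X$.
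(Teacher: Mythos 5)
Your first reduction (minimality forces $\bigcap_{n\in\mathbb{Z}}\overline{D_n}$ to be $\emptyset$ or $X$, and the empty case follows from \autoref{t:ResIntDom}) and your overall strategy for the dense case (exhaust $D_{-1}$ by open sets compactly contained in $D_{-1}$, restrict $\theta$, and pass to the direct limit via \autoref{lma:DirectLimitPar} and \autoref{p:PerPropDimNucDr}(1)) match the paper. But the step you yourself flag as the principal obstacle is a genuine gap, and the fix you sketch does not close it. The inclusions $\overline{E_n^{(k)}}\subseteq D_n$ from \autoref{l:ClosOfDomCont} only give $\bigcap_n\overline{E_n^{(k)}}\subseteq\bigcap_n D_n$, and the latter set need not be small: its closure is $\theta$-invariant, so by minimality it is $\emptyset$ or $X$, and in the dense-domain case it can perfectly well be $X$ (e.g.\ restricting a minimal homeomorphism to the complement of a point produces a minimal partial automorphism whose ``global part'' $\bigcap_n D_n$ is dense). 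Moreover $\bigcap_n\overline{E_n^{(k)}}$ itself is not $\theta$-invariant (it is only $\eta_k$-invariant, and $\eta_k$ is not minimal), so minimality cannot be applied to it directly. Your orbit-covering idea also falls short: for $x\in\bigcap_n\overline{E_n^{(k)}}$ a limit argument only traps the $\theta$-orbit of $x$ inside $\overline{E_{-1}^{(k)}}\cup\overline{\theta(E_{-1}^{(k)})}$, and this union need not miss any nonempty open set, since nothing prevents $D_{-1}\cup D_1=X$ even though $D_{-1}\subsetneq X$.

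The paper's proof supplies exactly the missing idea: use \emph{two} nested exhausting sequences $U_k\subseteq\overline{U_k}\subseteq V_k\subseteq\overline{V_k}\subseteq D_{-1}$ with $\overline{\theta(U_k)}\subseteq\theta(V_k)$ and $\overline{\theta(V_k)}\subseteq D_1$. Let $\alpha^{(k)}$ (domains $E_n$) and $\beta^{(k)}$ (domains $K_n$) be generated by $\theta|_{V_k}$ and $\theta|_{U_k}$. Apply \autoref{l:ClosOfDomCont} with $\alpha^{(k)}$ as the ambient system, so that $\overline{K_n}\subseteq E_n$ for all $n$ — i.e.\ closures of the inner domains are compared with the \emph{open} domains of the intermediate system, not with the $D_n$. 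The point is that $\bigcap_{n\in\mathbb{Z}}E_n$ \emph{is} $\theta$-invariant: it is contained in $E_{-1}=V_k\subseteq D_{-1}$, and on it $\theta$ agrees with $\alpha^{(k)}_1$, which maps the intersection onto itself; since its closure lies in $\overline{V_k}\subseteq D_{-1}\subsetneq X$, minimality together with \autoref{o:ClosInvSets} forces $\bigcap_n E_n=\emptyset$, hence $\bigcap_n\overline{K_n}=\emptyset$, and \autoref{c:IntCases}(2) applies to each $\beta^{(k)}$. Without this two-level sandwich your single-sequence argument has no mechanism for converting minimality of $\theta$ into smallness of $\bigcap_n\overline{E_n^{(k)}}$, so as written the proof is incomplete at its decisive step.
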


\begin{proof}
	As observed in \autoref{s:DenDom}, $\bigcap_{n\in\mathbb{Z}} \overline{D_n}$ is a closed $\theta$-invariant set. By minimality, it is either the whole space $X$, or the empty set. In the latter case, the desired bound follows from \autoref{c:IntCases}(2). In the first case, we show that $\theta$ is a direct limit of partial automorphisms on $X$ satisfying the condition in \autoref{c:IntCases}(2). Using the properties of $X$, we can view $D_{-1}=\bigcup\limits_{k=1}^{\infty}U_k=\bigcup\limits_{k=1}^{\infty} V_k$ as increasing unions of open subsets in $X$, satisfying 
	\[U_k\subseteq \overline{U_k}\subseteq V_k\subseteq\overline{V_k}\subseteq D_{-1}\subsetneq X,\]
	and
	\[\theta(U_k)\subseteq \overline{\theta(U_k)}\subseteq \theta(V_k)\subseteq\overline{\theta(V_k)}\subseteq D_1.\]
	Fix $k\in\mathbb{N}$, and let $\alpha^{(k)}=(\{E_n\}_{n\in\mathbb{Z}}, \{\alpha_n\}_{n\in\mathbb{Z}})$ and $\beta^{(k)} =(\{K_n\}_{n\in\mathbb{Z}}, \{\beta_n\}_{n\in\mathbb{Z}})$ be the partial actions of $\mathbb{Z}$ on $X$ generated by the partial homeomorphisms $\alpha_{1}\coloneqq\theta|_{V_k}:V_k\to\theta(V_k)$ and $\beta_1\coloneqq \theta|_{U_k}:U_k\to \theta(U_k)$, respectively.
	Since
	\[C_0(X)\rtimes_\theta\mathbb{Z}= \varinjlim_k C_0(X)\rtimes_{\beta^{(k)}}\mathbb{Z},  \]
	relying on \autoref{p:PerPropDimNucDr}(1) and \autoref{c:IntCases}(2), it is enough to show that $\bigcap_{n\in\mathbb{Z}}\overline{K_n}=\emptyset$. We first observe that $\bigcap_{n\in\mathbb{Z}} E_n$ is an invariant set for our (original) partial automorphism $\theta$. Indeed,
	\[\theta(D_{-1}\cap \bigcap_{n\in\mathbb{Z}} E_n) = \theta(\bigcap_{n\in\mathbb{Z}} E_n)=\alpha_1(\bigcap_{n\in\mathbb{Z}} E_n)= \bigcap_{n\in\mathbb{Z}} E_n, \]
	where at second step we use that $E_{-1}=V_k$. Similarly, one checks invariance under $\theta^{-1}$. By minimality of $\theta$, the assumption that $D_{-1}\neq X$, and \autoref{o:ClosInvSets}, we get $\bigcap_{n\in\mathbb{Z}}E_n=\emptyset$. By construction, \[\overline{K_1}=\overline{\theta(U_k)}\subseteq\theta(V_k)= E_1\] 
	and 
	\[\overline{K_{-1}}=\overline{U_k}\subseteq V_k=E_{-1}.\]
	Thus, \autoref{l:ClosOfDomCont} implies that $\bigcap_{n\in\mathbb{Z}} \overline{K_n}\subseteq \bigcap_{n\in\mathbb{Z}} E_n=\emptyset$, as desired.
\end{proof}

\begin{Cor}\label{c:classificationMinimal}
	Let  $\theta=(\{D_n\}_{n\in\mathbb{Z}}, \{\theta_n \}_{n\in\mathbb{Z}})$ be a minimal partial automorphism acting on an infinite compact Hausdorff second countable space $X$ with finite covering dimension. Then $C(X)\rtimes_{\theta} \mathbb{Z}$ is classifiable.
\end{Cor}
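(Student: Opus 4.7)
The plan is to verify the standard hypotheses of the modern Elliott classification theorem (\cite{EGLN, GLN1, GLN2}) for $A \coloneqq C(X) \rtimes_{\theta} \mathbb{Z}$, namely that $A$ is simple, separable, unital, nuclear, in the UCT class, and has finite nuclear dimension. Since $X$ is compact Hausdorff and second countable, it is metrizable by Urysohn, so $C(X)$ is separable and unital, and these properties pass to the crossed product by the discrete group $\mathbb{Z}$. Nuclearity of $A$ is inherited from $C(X)$ via amenability of $\mathbb{Z}$, and $A$ satisfies the UCT for the same reason (since $C(X)$ is commutative and hence in the UCT class, and $\mathbb{Z}$ is amenable).

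For the finite nuclear dimension step I would distinguish two cases. If $D_{-1} = X$, then $\theta$ is a genuine self-homeomorphism of $X$, and \autoref{t:HirWu} gives $\dimnuc(A) \leq 2\dim(X)^2 + 6\dim(X) + 4 < \infty$ directly. Otherwise $D_{-1} \subsetneq X$, and \autoref{t:MinimalAut} gives $\dimnuc(A) \leq 2\dim(X) + 6 < \infty$. Either way, $\dimnuc(A)$ is finite.

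The remaining point is simplicity, which is where I expect the only subtlety. Minimality together with $X$ being infinite should force topological freeness of the partial action: if for some $n \neq 0$ there existed $x \in D_{-n}$ with $\theta_n(x) = x$, then by \autoref{r:IncDom} the finite set $\{\theta_k(x) : 0 \leq k \leq n-1\}$ would be a non-empty, finite (hence closed in Hausdorff $X$), $\theta$-invariant proper subset of $X$, contradicting minimality. So the partial action has no periodic points, and is in particular topologically free. Exel's simplicity criterion for topologically free minimal partial actions of amenable discrete groups (see \cite[Chapter~29]{Exel}) then yields that $A$ is simple. Assembling all the above and invoking the classification theorem of \cite{EGLN, GLN1, GLN2}, we conclude that $A$ is classifiable. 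The only step I anticipate requiring any care is pinpointing the precise form of Exel's simplicity theorem applicable to partial actions of $\mathbb{Z}$ on commutative algebras; the rest is routine verification.
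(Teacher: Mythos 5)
Your route is essentially the paper's: the finite-orbit argument showing that minimality plus $|X|=\infty$ forces freeness is exactly the one used there, simplicity then comes from Exel's criterion (the paper cites \cite[Corollary~29.8]{Exel}), finite nuclear dimension comes from \autoref{t:MinimalAut} when $D_{-1}\subsetneq X$, and the classification theorems finish. The one genuine gap is the UCT step. Amenability of $\mathbb{Z}$ gives nuclearity of the partial crossed product, but it does not give the UCT ``for the same reason'': the bootstrap-class permanence for crossed products by $\mathbb{Z}$ is a statement about global actions, and no theorem you invoke covers partial crossed products. An actual argument is required, and the paper supplies one using precisely the freeness you already established: topological freeness makes $\big(C(X)\rtimes_\theta\mathbb{Z},\, C(X)\big)$ a Cartan pair in the sense of \cite{R}, and the main result of \cite{BL} then yields the UCT. (Alternatively one can realize the partial crossed product as the $C^*$-algebra of the associated \'{e}tale transformation groupoid, which is Hausdorff, second countable and amenable, and quote Tu's theorem; but some such argument must be made explicit.)

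Two smaller points. First, if $D_{-1}=X$ it does not immediately follow that $\theta$ is a self-homeomorphism of $X$, since a priori $D_1$ could still be a proper (clopen) subset; either pass to $\theta^{-1}$, which generates the same crossed product and then satisfies the hypothesis of \autoref{t:MinimalAut}, or observe that minimality excludes this case because $\bigcap_{n\geq 0}D_n$ would be a nonempty proper closed invariant set. With that repaired, your treatment of the global case via \autoref{t:HirWu} is a harmless variant of the paper's, which instead quotes \cite{TW} directly for global minimal systems. Second, your identification of simplicity as ``the only subtlety'' misjudges where the work lies: simplicity and finite nuclear dimension go through as you say, and it is the UCT that needs the extra idea.
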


\begin{proof}
	The result for global actions follows from \cite{TW}. If the system is not global, we may assume without loss of generality that $D_{-1}\subsetneq X$. Notice that minimality implies freeness of the partial automorphism. Indeed, if  $x\in D_{-n}$ and $\theta_{-n}(x)=x$ for some $n\in\mathbb{N}$, then 
	\[\{x,\theta(x),\ldots,\theta_{n-1}(x)\}\]
	is a closed $\theta$-invariant set. As $X$ is infinite, this contradicts minimality of $\theta$.
	It follows from \autoref{t:MinimalAut}, \cite[Corollary~29.8]{Exel}, and \cite[Corollary~C]{CETWW} that $C(X)\rtimes_{\theta}\mathbb{Z}$ is a unital simple separable $C^*$-algebra with nuclear dimension 0 or 1.
	To see that $C(X)\rtimes_\theta \mathbb{Z}$ satisfies the UCT, observe that $(C(X)\rtimes_\theta\mathbb{Z}), C(X))$ is a Cartan pair (see \cite{R} for definition) and apply the main result of \cite{BL}.
	We conclude by \cite{EGLN} that $C(X)\rtimes_\theta\mathbb{Z}$ is classifiable by the Elliott invariant.
\end{proof}

\section{Zero dimensional boundaries}\label{s:ZeroDimBdrsSubR}

In this section, we bound the nuclear dimension of crossed product associated to partial automorphisms on a class of 1-dimensional spaces. This class contains all 1-dimensional CW complexes.\\

We start by recalling some topological definitions.

\begin{Def}\label{d:RegOp}
	Let $X$ be a topological space and let $U\subseteq X$ be an open subset. We say that $U$ is \textit{regular open} in $X$ if $\mathrm{int}(\overline{U})=U$.
\end{Def}

\begin{Rmk}\label{r:PrpRegOp}
	\mbox{}
	\begin{enumerate}
		\item  $\mathrm{int}(\overline{A})$ is regular open for any $A\subseteq X$.
		\item Any finite intersection of regular open sets is regular open.
	\end{enumerate}
\end{Rmk}

\begin{Def}\label{d:SemReg}
	A topological space $X$ is called \textit{semiregular} if the regular open subsets in $X$ form a base for the topology.
\end{Def}

We remark that every regular space is semiregular.

\begin{Def}\label{d:HerLind}
	A topological space $X$ is called \emph{Lindel\"{o}f} if every open cover of $X$ has a countable subcover. Moreover, $X$ is called \emph{hereditarily Lindel\"{o}f} if every subspace of it is Lindel\"{o}f.
\end{Def}

\begin{lma}\label{l:RegOpUni}
	Let $X$ be a regular hereditarily Lindel\"{o}f topological space, and let $U$ be an open subset of $X$. Then there exists a sequence $(U_i)_{i=1}^{\infty}$ of regular open subsets in $X$, such that $\overline{U_i}\subseteq U$ for all $i$, and $U=\bigcup_{i=1}^{\infty}U_i$.
\end{lma}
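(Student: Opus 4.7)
The plan is to combine regularity (which lets us shrink neighborhoods with room for the closure) with hereditary Lindel\"ofness (which lets us extract a countable subcover from $U$), and then use \autoref{r:PrpRegOp}(1) to replace ordinary open neighborhoods with regular open ones.

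More concretely, I would first show that around every point of $U$ there is a regular open neighborhood whose closure sits inside $U$. Fix $x\in U$. By regularity of $X$, there exists an open set $V_x$ with $x\in V_x$ and $\overline{V_x}\subseteq U$. Set $W_x\coloneqq \mathrm{int}(\overline{V_x})$. By \autoref{r:PrpRegOp}(1), $W_x$ is regular open, and clearly $V_x\subseteq W_x\subseteq \overline{V_x}$, so
\[ x\in W_x\subseteq \overline{W_x}\subseteq \overline{V_x}\subseteq U. \]

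The family $\{W_x\}_{x\in U}$ is then an open cover of $U$. Since $X$ is hereditarily Lindel\"of, the subspace $U$ is Lindel\"of, so we can extract a countable subcover $\{W_{x_i}\}_{i=1}^{\infty}$. Setting $U_i\coloneqq W_{x_i}$ yields a sequence of regular open subsets of $X$ with $\overline{U_i}\subseteq U$ and $U=\bigcup_{i=1}^{\infty}U_i$, as required.

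I do not anticipate a serious obstacle: the only subtlety is noticing that ordinary regularity alone gives open shrinkings but not regular open shrinkings, which is resolved by passing to the interior of the closure via \autoref{r:PrpRegOp}(1). Everything else is a direct application of the Lindel\"of property.
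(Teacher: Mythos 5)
Your proof is correct and follows essentially the same strategy as the paper: shrink neighbourhoods inside $U$ via regularity and then extract a countable subcover via hereditary Lindel\"ofness. The only (harmless) difference is how the regular open sets are produced — the paper selects them from a base of regular open sets (using that regular spaces are semiregular), whereas you pass directly to $W_x=\mathrm{int}(\overline{V_x})$, which works just as well by \autoref{r:PrpRegOp}(1).
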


\begin{proof}
	Let $\mathcal{B}=\{W_j \}_{j\in J}$ be a base for the topology, consisting of regular open sets. Given $x\in U$, use regularity of $X$ to find an open neighbourhood $V_x$ of $x$, such that $\overline{V_x}\subseteq U$. Let $j_x\in J$ be such that $x\in W_{j_x}\subseteq V_x$. Then $\{W_{j_x}\}_{x\in U}$ is an open cover of $U$, which admits a countable subcover $(U_i)_{i=1}^{\infty}$, since $X$ is hereditary Lindel\"{o}f. This cover satisfies the required properties.
	
\end{proof}

\begin{lma}\label{l:IncUniRegOp}
	Let $X$ be a regular hereditary Lindel\"{o}f topological space, and let $\theta: U\to V$ be a homeomorphism between two open subsets of $X$. Then there exist increasing sequences $(U_i)_{i=1}^{\infty}$ and $(V_i)_{i=1}^{\infty}$ of regular open sets in $X$ such that
	\begin{enumerate}
		\item $V_i=\theta(U_i)$ for all $i\in \mathbb{N}$,
		\item $U=\bigcup_{i=1}^{\infty}U_i$,
		\item $\overline{U_i}\subseteq U$ for all $i\in\mathbb{N}$, and
		\item $\overline{V_i}\subseteq V$ for all $i\in\mathbb{N}$.
	\end{enumerate}
\end{lma}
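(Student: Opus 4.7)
\medskip
\noindent\textbf{Plan of proof.} The plan is to apply \autoref{l:RegOpUni} separately to $U$ and to $V$, combine the two resulting countable covers via a double-index intersection, and then pass to an interior-of-closure to restore regular openness while keeping closures inside $U$ and $V$; a second interior-of-closure, applied to finite unions, will then produce the increasing chain.

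Concretely, I would first apply \autoref{l:RegOpUni} to $U$ to obtain regular open sets $(W^U_i)_{i\in\mathbb{N}}$ in $X$ with $\overline{W^U_i}\subseteq U$ and $U=\bigcup_i W^U_i$, and likewise to $V$ to produce $(W^V_j)_{j\in\mathbb{N}}$ with the analogous properties. Set $T_{i,j}\coloneqq W^U_i\cap\theta^{-1}(W^V_j)$; this is open in $U$, and since $\theta^{-1}(V)=U$ we have $\bigcup_{i,j}T_{i,j}=U$. After enumerating the family as a single sequence $(T'_n)_{n=1}^{\infty}$, I set $U'_n\coloneqq \mathrm{int}(\overline{T'_n})$, which is regular open in $X$ by \autoref{r:PrpRegOp}(1). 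By construction $\overline{T'_n}\subseteq \overline{W^U_i}\subseteq U$, so $\overline{U'_n}\subseteq U$; and via the standard subspace identities $\overline{A}^W=\overline{A}\cap W$ and $\mathrm{int}_X(A)=\mathrm{int}_W(A)$ for $A\subseteq W$ with $W$ open in $X$, combined with the fact that $\theta\colon U\to V$ is a homeomorphism, one verifies the identity $\theta(U'_n)=\mathrm{int}(\overline{\theta(T'_n)})$, which is therefore regular open in $X$ with $\overline{\theta(U'_n)}\subseteq \overline{W^V_j}\subseteq V$.

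To make the sequence increasing, I would then define $U_n\coloneqq\mathrm{int}\bigl(\overline{\bigcup_{k\leq n}U'_k}\bigr)$ and $V_n\coloneqq\theta(U_n)$. Each $U_n$ is regular open by \autoref{r:PrpRegOp}(1). Because the union is finite, $\overline{\bigcup_{k\leq n}U'_k}=\bigcup_{k\leq n}\overline{U'_k}\subseteq U$, so $\overline{U_n}\subseteq U$; the same argument applied to the images yields $V_n=\mathrm{int}\bigl(\overline{\bigcup_{k\leq n}\theta(U'_k)}\bigr)$ regular open in $X$ with $\overline{V_n}\subseteq V$. Monotonicity is immediate, and since each $U'_k$ is open and contained in $\overline{\bigcup_{l\leq n}U'_l}$, we have $U'_k\subseteq U_n$ for $k\leq n$, hence $\bigcup_n U_n=U$.

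The main obstacle is that regular openness is not preserved under (even finite) unions, so one cannot simply take partial unions of the $U'_n$ directly; this is precisely why the interior-of-closure operation has to be applied twice — once to promote the $T_{i,j}$ to regular opens, and once to glue finitely many of them together into a regular open increasing chain. The only real bookkeeping nuisance is tracking the ambient space in which closures and interiors are computed, but by construction the relevant closures all already lie inside $U$ or $V$, so the standard subspace identities apply directly.
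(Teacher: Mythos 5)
Your proof is correct and follows essentially the same route as the paper: both arguments apply \autoref{l:RegOpUni} twice to produce a countable cover of $U$ by sets whose closures lie in $U$ and whose images have closures in $V$, and both use the interior-of-closure operation (justified by the subspace identities for the open sets $U$ and $V$) to turn finite unions into an increasing chain of regular open sets. The only difference is cosmetic: the paper applies the second covering inside each $\theta(U_i')$ so the pieces $\theta^{-1}(V_j^{(i)})$ are regular open from the start, whereas you cover $V$ independently, intersect, and insert one extra regularization step.
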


\begin{proof}
	Apply \autoref{l:RegOpUni} to obtain a sequence $(U_i')_{i=1}^{\infty}$ of regular open subsets in $X$ such that $U=\bigcup_{i=1}^{\infty}U_i'$, and $\overline{U_i'}\subseteq U$ for all $i\in\mathbb{N}$. For each $i\in\mathbb{N}$, apply \autoref{l:RegOpUni} again to obtain a sequence $(V_j^{(i)})_{j=1}^{\infty}$ of regular open subsets in $X$, such that $\overline{V_j^{(i)}}\subseteq \theta(U_i')$, and $\theta(U_i')=\bigcup_{j=1}^{\infty}V_j^{(i)}$. Now, $$\mathcal{W}\coloneqq \{\theta^{-1}(V_j^{(i)})\colon i,j\in\mathbb{N} \}$$
	is a countable open cover of $U$. We claim that for each $W\in\mathcal{W}$
	\begin{enumerate}
		\item[(a)] $W$ is regular open in $X$,
		\item[(b)] $\overline{W}\subseteq U$, 
		\item[(c)] $\theta(W)$ is regular open in $X$,
		\item[(d)] $\overline{\theta(W)}\subseteq V$.
	\end{enumerate}
	
	Indeed, let $W=\theta^{-1}(V_j^{(i)})\in \mathcal{W}$, for some $i,j\in\mathbb{N}$.
	\begin{enumerate}
		\item[(a)] We have
		$$\mathrm{int}(\overline{\theta^{-1}(V_j^{(i)})})=\mathrm{int}(\theta^{-1}(\overline{V_j^{(i)}}))=\theta^{-1}(\mathrm{int}(\overline{V_j^{(i)}}))=
		\theta^{-1}(V_j^{(i)}), $$
		where at the first step we use that $\overline{V_j^{(i)}}\subseteq V$ and that 
		$$\overline{\theta^{-1}(V_j^{(i)})}\subseteq \overline{\theta^{-1}(\theta(U_i'))}= \overline{U_i'}\subseteq U; $$
		at the second step we use that $U,V$ are open in $X$; at the last step we use that $V_j^{(i)}$ is regular open.
		\item[(b)] The fact that $\overline{W}\subseteq U$ is part of the proof of (a).
		\item[(c)]  Indeed, $\theta(W)=V_j^{(i)}$ is regular open.
		\item[(d)] Follows since $\overline{\theta(W)}=\overline{V_j^{(i)}}\subseteq V$, for all $i,j\in\mathbb{N}$.
	\end{enumerate}
	Enumerate $\mathcal{W}=(W_i)_{i=1}^{\infty}$. We define inductively an increasing sequence $(U_i)_{i=1}^{\infty} $ of regular open subsets in $X$ that satisfy conditions (1)-(4) in the lemma. Set $U_1\coloneqq W_1$. Fix $n> 1$, and assume that we have defined $(U_i)_{i=1}^{n-1}$. Set
	$$U_n\coloneqq \mathrm{int}(\overline{U_{n-1}\cup W_n}).$$
	By \autoref{r:PrpRegOp}(1), $U_n$ is regular open in $X$. Moreover, by condition (b) for $W_n$, and the inductive assumption that $\overline{U_{n-1}}\subseteq U$, we have 
	$$\overline{U_n}\subseteq \overline{U_{n-1}\cup W_n}\subseteq U.$$
	Next, 
	$$\theta(U_n)=\theta(\mathrm{int}(\overline{U_{n-1}\cup W_n}))=\mathrm{int}(\theta(\overline{U_{n-1}\cup W_n})) =\mathrm{int}(\overline{\theta(U_{n-1}\cup W_n)}),$$
	where at the second step we use that $U,V$ are open in $X$, and at the last step we use that $\overline{U_{n-1}\cup W_n}\subseteq U$, and that 
	$\overline{\theta(U_{n-1}\cup W_n)}\subseteq  V$
	by the inductive assumption that $\overline{\theta(U_{n-1})}\subseteq V$ and condition (d) for $W_n$. \autoref{r:PrpRegOp} implies that $\theta(U_n)$ is regular open in $X$. Lastly, observe that it follows from the above computation that
	$$\overline{\theta(U_n)}\subseteq \overline{\theta(U_{n-1}\cup W_n)}\subseteq V.$$
	We complete the proof by setting $V_i\coloneqq \theta(U_i)$ for all $i\in\mathbb{N}$.
	
\end{proof}

\begin{Rmk}\label{r:RegOpDoms}
	Let $X$ be a topological space, and let $h: U\to V$ be a homeomorphism between two open subsets of $X$. Assume that $h$ restricts to a homeomorphism $\theta: D_{-1}\to D_1$, where $D_{-1}$ and $D_1$ are regular open subsets of $X$, $\overline{D_{-1}}\subseteq U$, and $\overline{D_1}\subseteq V$. Then $\theta$ generates a toplogical partial action $\Theta=(\{D_n\}_{n\in\mathbb{Z}}, \{\theta_n \}_{n\in\mathbb{Z}})$ with regular open domains.
\end{Rmk}

\begin{proof}
	We prove by induction that $D_n$ and $D_{-n}$ are regular open in $X$, for all $n\geq 1$. For $n=0,1$ this follows from the assumptions. Let $n\geq 2$, and assume that $D_{n-1}$ and $D_{-(n-1)}$ are regular open in $X$. Then
	
	\begin{align*}
	D_n&=\theta_1(D_{n-1}\cap D_{-1})\\
	&=\theta(\mathrm{int}(\overline{D_{n-1}\cap D_{-1}}))\\
	&=\mathrm{int}(h(\overline{D_{n-1}\cap D_{-1}}))\\
	&=\mathrm{int}(\overline{h(D_{n-1}\cap D_{-1})})\\
	&=\mathrm{int}(\overline{D_n}),
	\end{align*}
	at the second step we use the inductive assumption and \autoref{r:PrpRegOp}(2), at the third step we use $\overline{D_{n-1}\cap D_{-1}}\subseteq U$, and at the fourth step we use $\overline{h(D_{n-1}\cap D_{-1})}\subseteq V$. The argument for $D_{-n}$ is similar.
\end{proof}

\begin{Prop}\label{p:ResParAutRegOpDoms}
	Let $X$ be a \LCH second countable space, and let $\theta: D_{-1}\to D_1$ be a partial automorphism on $X$. Assume that the partial action generated by $\theta$ has regular open domains $\{D_n\}_{n\in\mathbb{Z}}$. Then
	\[\dimnuc(C_0(X)\rtimes_{\theta}\mathbb{Z})\leq \dimnuc(C_0(\partial_X\big(\bigcap_{n\in\mathbb{Z}}\overline{D_n}\big))\rtimes \mathbb{Z}) +2\dim(X)^2 +8\dim(X)+7. \]
\end{Prop}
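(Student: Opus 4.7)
The plan is to combine \autoref{t:ResIntDom} with an extension argument that further splits the ``dense-domain part'' $Y\coloneqq \bigcap_{n\in\mathbb{Z}}\overline{D_n}$ into its $X$-interior and its $X$-boundary. As observed in \autoref{s:DenDom} (via \autoref{l:InvInterDom}), $Y$ is a closed $\theta$-invariant subset of $X$, and \autoref{t:ResIntDom} already gives
\[\dimnuc(C_0(X)\rtimes_\theta\mathbb{Z})\leq \dimnuc(C_0(Y)\rtimes_\theta\mathbb{Z})+2\dim(X)+2.\]
It therefore suffices to bound $\dimnuc(C_0(Y)\rtimes_\theta\mathbb{Z})$ by $\dimnuc(C_0(\partial_X Y)\rtimes_\theta\mathbb{Z})+2\dim(X)^2+6\dim(X)+5$, which after combining yields the stated bound.

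The key step is to show that both $\mathrm{int}_X(Y)$ and $\partial_X Y = Y\setminus\mathrm{int}_X(Y)$ are $\theta$-invariant. For $\mathrm{int}_X(Y)$, given $x\in \mathrm{int}_X(Y)\cap D_{-1}$, pick an open neighbourhood $W$ of $x$ with $W\subseteq Y\cap D_{-1}$; then $\theta(W)$ is open, and the argument from the proof of \autoref{l:InvInterDom} shows $\theta(W\cap\overline{D_{n-1}})\subseteq \overline{D_n}$ for every $n$, so $\theta(W)\subseteq Y$ and hence $\theta(x)\in\mathrm{int}_X(Y)$; the analogous argument works for $\theta^{-1}$. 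Invariance of $\partial_X Y$ then follows formally from invariance of $Y$ and $\mathrm{int}_X(Y)$ together with injectivity of $\theta$ and $\theta^{-1}$ on their domains. Applying \autoref{p:InvSetS.e.s} to the open invariant set $\mathrm{int}_X(Y)\subseteq Y$ gives the short exact sequence
\[0\to C_0(\mathrm{int}_X(Y))\rtimes_\theta\mathbb{Z}\to C_0(Y)\rtimes_\theta\mathbb{Z}\to C_0(\partial_X Y)\rtimes_\theta\mathbb{Z}\to 0,\]
and \autoref{p:PerPropDimNucDr}(2) then yields
\[\dimnuc(C_0(Y)\rtimes_\theta\mathbb{Z})\leq \dimnuc(C_0(\mathrm{int}_X(Y))\rtimes_\theta\mathbb{Z})+\dimnuc(C_0(\partial_X Y)\rtimes_\theta\mathbb{Z})+1.\]

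Here is where regularity of the domains enters crucially: since $Y\subseteq \overline{D_n}$ and $D_n$ is regular open in $X$, we obtain $\mathrm{int}_X(Y)\subseteq \mathrm{int}_X(\overline{D_n})=D_n$ for every $n\in\mathbb{Z}$. Consequently, the restriction of $\theta$ to $\mathrm{int}_X(Y)$ is a global $\mathbb{Z}$-action (the generating partial homeomorphism is defined on all of $\mathrm{int}_X(Y)$ and maps it into itself by invariance). The Hirshberg--Wu theorem \autoref{t:HirWu}, combined with \autoref{t:DimSub}, therefore bounds
\[\dimnuc(C_0(\mathrm{int}_X(Y))\rtimes_\theta\mathbb{Z})\leq 2\dim(X)^2+6\dim(X)+4.\]

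Putting everything together gives
\[\dimnuc(C_0(Y)\rtimes_\theta\mathbb{Z})\leq \dimnuc(C_0(\partial_X Y)\rtimes_\theta\mathbb{Z})+2\dim(X)^2+6\dim(X)+5,\]
and then feeding this back into the inequality from \autoref{t:ResIntDom} produces precisely the stated estimate. The main technical obstacle is the invariance of $\mathrm{int}_X(Y)$; the regular-open hypothesis on the domains is used twice, once implicitly through the closure-pushforward argument that identifies the image of $W$ inside $Y$, and once explicitly to conclude that $\mathrm{int}_X(Y)$ sits inside every $D_n$ so that the restricted action is global.
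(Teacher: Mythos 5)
Your proposal is correct and follows essentially the same route as the paper: reduce to the dense-domain piece via \autoref{t:ResIntDom}, split $\bigcap_{n\in\mathbb{Z}}\overline{D_n}$ into its $X$-interior and $X$-boundary with the short exact sequence from \autoref{p:InvSetS.e.s}, use regular-openness to see that the interior lies in every $D_n$ so that $\theta$ restricts there to a global action covered by \autoref{t:HirWu}, and combine with \autoref{p:PerPropDimNucDr}. The only difference is that you spell out the invariance of $\mathrm{int}_X\big(\bigcap_{n\in\mathbb{Z}}\overline{D_n}\big)$, which the paper asserts without detail; your argument for it is fine.
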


\begin{proof}
	By \autoref{t:ResIntDom}, we know that it suffices to show that 
	\[\dimnuc(C_0\big(\bigcap_{n\in\mathbb{Z}}\overline{D_n}\big))\rtimes\mathbb{Z})\leq \dimnuc(C_0(\partial_X\big(\bigcap_{n\in\mathbb{Z}}\overline{D_n}\big))\rtimes \mathbb{Z}) + 2\dim(X)^2 +6\dim(X)+5. \]
	Notice that $\mathrm{int}_X\big(\bigcap_{n\in\mathbb{Z}}\overline{D_n}\big)$ is an open, $\theta$-invariant set inside $\bigcap_{n\in\mathbb{Z}}\overline{D_n}$. Thus
	\[0\to C_0(\mathrm{int}_X\big(\bigcap_{n\in\mathbb{Z}}\overline{D_n}\big))\rtimes\mathbb{Z}\to C_0\big(\bigcap_{n\in\mathbb{Z}}\overline{D_n}\big)\rtimes\mathbb{Z}\to C_0(\partial_X \big(\bigcap_{n\in\mathbb{Z}}\overline{D_n}\big))\rtimes\mathbb{Z}\to 0 \]
	is a short-exact sequence of $C^*$-algebras. 
	For every $k\in\mathbb{Z}$ we have
	\[\mathrm{int}_X\big(\bigcap_{n\in\mathbb{Z}}\overline{D_n}\big)\subseteq \mathrm{int}(\overline{D_k})=D_k, \]
	because $D_k$ is regular open in $X$. Therefore, $\theta$ restricts to a global action on $\mathrm{int}_X(\bigcap_{n\in\mathbb{Z}}\overline{D_n})$. By \autoref{t:HirWu}
	\[\dimnuc(C_0(\mathrm{int}_X\big(\bigcap_{n\in\mathbb{Z}}\overline{D_n}\big))\rtimes\mathbb{Z})\leq 2\dim(X)^2+6\dim(X)+4.  \]
	Applying \autoref{p:PerPropDimNucDr} to the short exact sequence, we get the required bound.
\end{proof}

The next Proposition shows that every partial automorphism on a \LCH second countable space is a direct limit of partial automorphisms with regular open domains.

\begin{Prop}\label{p:ParAutLimRegOpDom}
	Let $X$ be a \LCH second countable space, and let $\theta: U\to V$ be a partial automorphism on $X$. 
	Then, there exists a sequence $\{\theta^{(i)}\colon U_i\to V_i\}$ of partial automorphisms on $X$ with regular open domains such that
	$$C_0(X)\rtimes_{\theta}\mathbb{Z}=\varinjlim_i C_0(X)\rtimes_{\theta^{(i)}}\mathbb{Z}.$$
\end{Prop}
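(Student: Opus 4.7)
The plan is to assemble earlier lemmas almost verbatim; the work has essentially been done in \autoref{l:IncUniRegOp}, \autoref{r:RegOpDoms}, and \autoref{lma:DirectLimitPar}, so the task reduces to verifying hypotheses and stringing them together.

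First I would check that $X$ satisfies the topological hypotheses of \autoref{l:IncUniRegOp}. A \LCH space is completely regular and in particular regular; a second countable space is hereditarily Lindel\"{o}f because every subspace is again second countable and hence Lindel\"{o}f. So \autoref{l:IncUniRegOp} applies to the homeomorphism $\theta:U\to V$, giving increasing sequences $(U_i)_{i=1}^{\infty}$ and $(V_i)_{i=1}^{\infty}$ of regular open subsets of $X$ with $V_i=\theta(U_i)$, $\overline{U_i}\subseteq U$, $\overline{V_i}\subseteq V$, and $U=\bigcup_i U_i$.

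Next, set $\theta^{(i)}\coloneqq \theta|_{U_i}\colon U_i\to V_i$. Each $\theta^{(i)}$ is a partial homeomorphism whose initial and final sets $U_i,V_i$ are regular open in $X$, satisfying $\overline{U_i}\subseteq U$ and $\overline{V_i}\subseteq V$. These are exactly the hypotheses of \autoref{r:RegOpDoms} applied to the ambient homeomorphism $h=\theta:U\to V$, so the partial action of $\mathbb{Z}$ generated by $\theta^{(i)}$ has all domains regular open in $X$.

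Finally, since $U=\bigcup_{i=1}^\infty U_i$ is an increasing union of open sets and each $\theta^{(i)}$ is the restriction of $\theta$ to $U_i$, \autoref{lma:DirectLimitPar} yields
\[ C_0(X)\rtimes_{\theta}\mathbb{Z}=\varinjlim_i C_0(X)\rtimes_{\theta^{(i)}}\mathbb{Z}, \]
which finishes the proof. There is no genuine obstacle here: the content is really the construction of the regular open exhaustion carried out in \autoref{l:IncUniRegOp}, and this proposition just packages it together with \autoref{r:RegOpDoms} and \autoref{lma:DirectLimitPar}. The only point requiring minor care is confirming at the outset that \LCH second countable implies regular hereditarily Lindel\"{o}f, which is standard.
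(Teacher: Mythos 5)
Your proposal is correct and follows essentially the same route as the paper: apply \autoref{l:IncUniRegOp} (after noting $X$ is regular and hereditarily Lindel\"{o}f), restrict $\theta$ to the resulting regular open exhaustion, and invoke \autoref{r:RegOpDoms} together with \autoref{lma:DirectLimitPar}. The only cosmetic difference is that the paper justifies the topological hypotheses via metrizability, whereas you argue directly from local compactness and second countability, which is equally valid.
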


\begin{proof}
	$X$ is a metrizable second countable space and in particular also regular hereditary Lindel\"{o}f space. Thus, we can apply \autoref{l:IncUniRegOp} for $U,V,\theta$ and obtain increasing sequences $(U_i)_{i=1}^{\infty}$ and $(V_i)_{i=1}^{\infty}$ that satisfy conditions (1)-(4) in the Lemma. For each $i$, let $\theta^{(i)}: U_i\to V_i$ be the restriction of $\theta$ to $U_i$. By \autoref{lma:DirectLimitPar} we have
	$$C_0(X)\rtimes_{\theta}\mathbb{Z}=\varinjlim_i C_0(X)\rtimes_{\theta^{(i)}}\mathbb{Z}.$$
	Moreover, \autoref{r:RegOpDoms} implies that $\theta^{(i)}$ generates a partial action with regular open domains.
\end{proof}

\begin{Def}\label{d:ClassC}
	Let $\mathcal{C}$ denote the class of all $1$-dimensional \LCH second countable topological spaces with the following property: If $X\in \mathcal{C}$ and $F\subseteq X$ is a closed subspace in $X$, then $\dim(\partial_X F)=0$.
\end{Def}

\begin{Obs}\label{o:ClosedInC}
	If $X\in\mathcal{C}$ and $Y\subseteq X$ is a $1$-dimensional closed subspace, then $Y\in\mathcal{C}$.
\end{Obs}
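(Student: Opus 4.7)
The plan is to check the three conditions defining membership in $\mathcal{C}$ directly. First, since $Y$ is a closed subspace of the \LCH second countable space $X$, it is itself \LCH and second countable. Second, $Y$ is $1$-dimensional by hypothesis. It therefore remains to verify the boundary condition: for every closed subset $F \subseteq Y$, one must show $\dim(\partial_Y F)=0$.

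Let $F$ be a closed subset of $Y$. Since $Y$ is closed in $X$ and $F$ is closed in $Y$, $F$ is also closed in $X$. So the assumption $X \in \mathcal{C}$ yields $\dim(\partial_X F) = 0$. The key observation is then the set inclusion
\[
\partial_Y F \subseteq \partial_X F.
\]
To see this, note that because $F$ is closed in both $Y$ and $X$, we have $\partial_Y F = F \setminus \mathrm{int}_Y(F)$ and $\partial_X F = F \setminus \mathrm{int}_X(F)$. Moreover, $\mathrm{int}_X(F) \subseteq \mathrm{int}_Y(F)$: indeed, if $p \in \mathrm{int}_X(F)$, there exists an open set $U$ in $X$ with $p \in U \subseteq F \subseteq Y$, so $U \cap Y = U$ is a neighbourhood of $p$ in $Y$ contained in $F$. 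Taking complements in $F$ gives the desired inclusion of boundaries.

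Finally, applying \autoref{t:DimSub} to the inclusion $\partial_Y F \subseteq \partial_X F$ of subspaces of the \LCH second countable space $X$ yields $\dim(\partial_Y F) \leq \dim(\partial_X F) = 0$, completing the verification and hence the proof.

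I do not expect any serious obstacle here; the entire argument rests on the elementary relation between interiors taken in a space and in a closed subspace, combined with the monotonicity of covering dimension under subspaces.
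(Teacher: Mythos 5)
Your argument is correct and is precisely the verification the paper leaves implicit (the Observation is stated without proof): since $Y$ is closed in $X$, every closed $F\subseteq Y$ is closed in $X$, the comparison $\mathrm{int}_X(F)\subseteq\mathrm{int}_Y(F)$ gives $\partial_Y F\subseteq\partial_X F$, and \autoref{t:DimSub} applied inside the closed (hence locally compact, second countable) subspace $\partial_X F$ finishes the proof. The only cosmetic remark is that when $\partial_Y F=\emptyset$ one gets $\dim(\partial_Y F)=-1\leq 0$, which is harmless under the usual convention and affects the paper's definition of $\mathcal{C}$ in the same way.
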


\begin{Exl}\label{e:ExlCMan}
	Every $1$-manifold belongs to the class $\mathcal{C}$ (see \cite[Corollary~20]{Sch}).
\end{Exl}

Also every locally compact Hausdorff second countable graph belongs to the class $\mathcal{C}$. We recall the definition.
A \textit{graph} $X$ is a topological space which arises from a usual graph $G=(E,V)$ by replacing isolated vertices $X_0\subseteq V$ by points and each edge $e\in E$ by a copy of the unit interval, where $0$ and $1$ are identified with the endpoints of $e$. Equip $X$ with the quotient topology of the set $X_{0}\sqcup \bigsqcup_{e\in E}[0,1]_{e}$ under the quotient map $q$ used for gluing.

\begin{Obs}\label{o:VerDisc}
	Let $X$ be a graph. Then the induced topology on $V$ is discrete.
\end{Obs}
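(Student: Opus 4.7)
The plan is to argue directly from the quotient construction used to define a graph. Recall that $X$ carries the quotient topology from the space $Z \coloneqq X_0 \sqcup \bigsqcup_{e \in E} [0,1]_e$ under a quotient map $q$ which identifies, for each edge $e \in E$, the endpoints $0,1 \in [0,1]_e$ with the corresponding vertices in $V$. So a subset $U \subseteq X$ is open if and only if $q^{-1}(U)$ is open in $Z$. To establish that $V \subseteq X$ inherits the discrete topology, I need to exhibit, for every vertex $v \in V$, an open neighbourhood $W_v \subseteq X$ with $W_v \cap V = \{v\}$.

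First I would dispose of the easy case: if $v \in X_0$ is an isolated vertex, then $q^{-1}(\{v\}) = \{v\}$ is open in $Z$ (the point $v$ is an open singleton in the discrete summand $X_0$), so $\{v\}$ itself is open in $X$.

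Next I would handle the case where $v$ is an endpoint of one or more edges. Let $E_v \subseteq E$ denote the (possibly infinite) set of edges incident to $v$. For each $e \in E_v$, let $T_e \subseteq [0,1]_e$ be the half-open interval $[0,\tfrac{1}{2})$ if $v$ is identified with the endpoint $0$ of $e$, or $(\tfrac{1}{2},1]$ if $v$ is identified with $1$; if $e$ is a loop at $v$, take $T_e = [0,\tfrac{1}{2}) \cup (\tfrac{1}{2},1]$. Define
\[
W_v \coloneqq q\Bigl(\,\bigsqcup_{e \in E_v} T_e \Bigr).
\]
The key computation is to verify that $q^{-1}(W_v) = \bigsqcup_{e \in E_v} T_e$: the inclusion $\supseteq$ is immediate, while $\subseteq$ holds because any point outside $\bigsqcup_{e \in E_v} T_e$ is either an interior point $\tfrac{1}{2}$ or an endpoint different from $v$ in some edge, and such points are identified only with other edge-endpoints or with vertices distinct from $v$, so their image under $q$ is not in $W_v$. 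Since each $T_e$ is open in $[0,1]_e$ and $q^{-1}(W_v)$ is a union of such sets (one in each edge summand), $q^{-1}(W_v)$ is open in $Z$, and hence $W_v$ is open in $X$.

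Finally, I would observe that $W_v \cap V = \{v\}$: the only vertex lying in the image of $\bigsqcup_{e \in E_v} T_e$ is $v$ itself, since each $T_e$ contains exactly one endpoint of $e$ (the one identified with $v$) and no other vertex can be reached without passing through the midpoint $\tfrac{1}{2}$, which is excluded. This shows $\{v\}$ is open in the subspace topology on $V$, completing the argument. The only subtlety, and the step that needs to be stated carefully, is the verification that $q^{-1}(W_v)$ equals exactly $\bigsqcup_{e \in E_v} T_e$ rather than a larger set; this is what forces the use of the half-open intervals (avoiding the far endpoint) and the special treatment of loops.
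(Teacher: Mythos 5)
Your proof is correct and follows essentially the same route as the paper: isolated vertices are handled via the discrete summand $X_0$, and for a non-isolated vertex one takes the saturated open set formed by the half-intervals $[0,\tfrac12)$ resp. $(\tfrac12,1]$ in each incident edge, checks that its $q$-preimage is exactly that union (so its image is open), and notes the image meets $V$ only in the given vertex. Your explicit remark about loops is a minor elaboration already covered implicitly by the paper's union over both orientations.
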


\begin{proof}
	We show that for every $x\in V$, the singleton $\{x\}$ is open in $V$. If $x$ is an isolated vertex, then $q^{-1}(\{x\})\in X_0$ and $X_0$ is a discrete space which $q$ leaves as it is, so $\{x\}$ is open even as a subset of $X$. Assume now $x\notin X_0$. 
	Set
	\[U\coloneqq \bigcup\limits_{z\in V, (xz)\in E}[0,0.5)_{(xz)} \cup \bigcup\limits_{z\in V, (zx)\in E} (0.5,1]_{(zx)}.  \]
	Notice that $U$ is open in $X_0\cup \bigsqcup_{e\in E}[0,1]_e$ and $q^{-1}(q(U))=U$. Therefore, $q(U)$ is open in $X$. As $q(U)\cap V=\{x\}$, the claim follows.
\end{proof}

\begin{lma}\label{l:CGraph}
	Let $X$ be a locally compact Hausdorff second countable graph and let $M$ be a subset of $X$ such that $\mathrm{int}(M)=\emptyset$. Then $\dim(M)=0$.
\end{lma}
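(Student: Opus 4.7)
The plan is to write $M$ as a countable union of closed, zero-dimensional subsets and invoke a countable sum theorem. Decompose $X = V \cup \bigsqcup_e E_e^\circ$, where $V$ is the vertex set and $E_e^\circ$ denotes the image under $q$ of the open interval $(0,1)_e$, an open subset of $X$ homeomorphic to $(0,1)$. For each edge $e$ and each $n \in \mathbb{N}$, let $K_{e,n}$ be the image under $q$ of $[\tfrac{1}{n},\, 1 - \tfrac{1}{n}]_e$. Each $K_{e,n}$ is closed in $X$ (its preimage is closed in the disjoint union $X_0\sqcup\bigsqcup_e[0,1]_e$), is homeomorphic to a compact interval, lies inside $E_e^\circ$, and $E_e^\circ = \bigcup_n K_{e,n}$. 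Second countability of $X$ limits the edges to a countable family, so
\[M = (M \cap V) \cup \bigcup_{e,n}(M \cap K_{e,n})\]
expresses $M$ as a countable union of subsets of $M$, each closed in $M$.

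Next I would verify that each piece is zero-dimensional. By \autoref{o:VerDisc}, $V$ carries the discrete subspace topology, so $M \cap V$ is discrete and hence zero-dimensional. For $M \cap K_{e,n}$, the interior in $X$ of the closed arc $K_{e,n}$ equals the open subarc $(\tfrac{1}{n}, 1 - \tfrac{1}{n})_e$, which is open in $X$. If $M \cap K_{e,n}$ had non-empty interior relative to $K_{e,n}$, then some relatively open subset would meet this interior in a set open in $X$ and contained in $M$, contradicting the hypothesis $\mathrm{int}(M) = \emptyset$. Hence $M \cap K_{e,n}$ is a subset of the compact interval $K_{e,n} \cong [\tfrac{1}{n}, 1 - \tfrac{1}{n}]$ with empty interior there; density of the complement produces clopen neighborhoods of arbitrarily small diameter at every point (given $x$, pick $a, b \in K_{e,n} \setminus M$ with $a < x < b$ close to $x$, so that $(a,b) \cap M \cap K_{e,n}$ is clopen in $M \cap K_{e,n}$, with obvious one-sided modifications at the two endpoints of $K_{e,n}$). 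Thus $\dim(M \cap K_{e,n}) = 0$.

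Finally, I would assemble the pieces. As a subspace of the metrizable second countable space $X$, the set $M$ is itself separable and metrizable, and the countable sum theorem for closed zero-dimensional subsets of a separable metric space then yields $\dim(M) = 0$. The only real subtlety is that $M$ need not be locally compact, so \autoref{t:DimF-Sig} as stated does not apply to $M$ verbatim; one must either appeal to the separable-metric version of the sum theorem, or equivalently assemble the pointwise clopen neighborhoods above into a clopen basis of $M$ (the vertex points being treated via a star neighborhood, which is available because local compactness of $X$ forces each vertex to have finite degree). This local-compactness caveat is the only non-routine point in the argument.
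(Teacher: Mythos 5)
Your proof is correct, and it follows the same basic strategy as the paper -- split $M$ along the graph structure into a vertex part (discrete by \autoref{o:VerDisc}) and edge parts (zero-dimensional because $\mathrm{int}(M)=\emptyset$), then apply a sum theorem -- but the details differ in ways worth noting. The paper keeps the edge part as a single piece $M\cap q\big(\bigsqcup_e(0,1)_e\big)$, identifies it via $q$ with a subset of the $1$-manifold $\bigsqcup_e(0,1)_e$ having empty interior, quotes \cite[Corollary~20]{Sch} for its zero-dimensionality, and then invokes \autoref{t:DimF-Sig} for the open/closed ($F_\sigma$) decomposition of $M$; you instead exhaust each open edge by the compact arcs $K_{e,n}$, prove zero-dimensionality of $M\cap K_{e,n}$ by an elementary clopen-interval argument (no citation needed), and use the countable closed sum theorem for separable metric spaces. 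Your route has one genuine advantage: as you observe, $M$ need not be locally compact, so \autoref{t:DimF-Sig} as stated in the paper does not apply to $M$ verbatim -- the paper's proof applies it anyway, which is harmless only because the underlying result in Pears holds for general (separable) metric spaces -- whereas your closed-sum formulation avoids the issue outright. The concluding aside about star neighborhoods and finite vertex degree is not needed for your main argument and could be dropped; everything else checks out, including the closedness of $V$ and of each $K_{e,n}$ in $X$, the countability of the edge set from second countability, and the step showing that a nonempty relatively open subset of $K_{e,n}$ inside $M$ would force $\mathrm{int}(M)\neq\emptyset$.
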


\begin{proof}
	We first claim that $\mathrm{int}(q^{-1}(M))=\emptyset$. Observe that $\mathrm{int}(q^{-1}(M))\subseteq \bigsqcup_{e\in E}[0,1]_{e}$, since by assumption $M$ contains no isolated vertices. Thus, if $x\in \mathrm{int}(q^{-1}(M))\cap I_e$, there exists an open neighborhood $U$ of $x$ such that $U\subseteq \mathrm{int}(q^{-1}(M))\cap I_e$. By shrinking $U$ if necessary, $\mathrm{int}(q^{-1}(M))$ contains an open subinterval of $(0,1)_e$. This contradicts the assumption that $\mathrm{int}(M)=\emptyset$, and the claim follows.
	For any $e\in E$, the quotient map restricts to a homeomorphism $q|_{(0,1)_e}:(0,1)_e\to q((0,1)_e)$. Thus, $M\cap q\big(\bigsqcup_{e\in E}(0,1)_e\big)\cong q^{-1}(M)\cap \bigsqcup_{e\in E}(0,1)_e$ via $q$. This implies
	\[\dim\big(M\cap q\big(\bigsqcup_{e\in E}(0,1)_e\big)\big)=\dim\big(q^{-1}(M)\cap \bigsqcup_{e\in E}(0,1)_e\big)=0, \]
	where the equality to $0$ follows from \cite[Corollary~20]{Sch} and the fact that $q^{-1}(M)\cap \bigsqcup_{e\in E}(0,1)_e$ has an empty interior (as a subspace of $\bigsqcup_{e\in E}(0,1)_e$). We have
	\[M=\big(M\cap q\big(\bigsqcup_{e\in E}(0,1)_e\big)\big)\cup \underbrace{\big(M\cap\bigcup_{e\in E} q(\{0,1\}_e)\big)}_{M\cap V}.\]
	By \autoref{o:VerDisc}, $M\cap V$ is a discrete space. Thus, we viewed $M$ as a union of a closed and an open zero-dimensional subsets. In a metrizable space, every open set is an $F_\sigma$-set. Applying \autoref{t:DimF-Sig}, we obtain $\dim(M)=0$.
\end{proof}

The following is immediate from the Lemma.

\begin{Exl}
	Every locally compact Hausdorff second countable graph belongs to $\mathcal{C}$.
\end{Exl}

Next, we give an example of a $1$-dimensional \LCH second-countable space that does not belong to the class $\mathcal{C}$.

\begin{Exl}\label{e:NonExlC}
	$X=\left(\{0\}\cup \{\frac{1}{n}: n>0 \}\right)\times [0,1]$ with the product topology is a one-dimensional space and the closed subspace $F=\{0\}\times [0,1]$ has a one-dimensional boundary $\partial_X F=F$.
\end{Exl}

As an application to the techniques developed in this section, we bound the nuclear dimension of crossed products associated to partial automorphisms on locally compact subspaces of topological spaces belonging to $\mathcal{C}$ (e.g. locally compact subspaces of $\mathbb{R}$ with the induced Euclidean topology).

\begin{Cor}\label{c:ResPartAutRRegOpDoms}
	Let $Y\in\mathcal{C}$ and $X\subseteq Y$ be a locally compact subspace. Let $\theta: D_{-1}\to D_1$ be a partial automorphism on $X$. Then
	\[\dimnuc(C_0(X)\rtimes_{\theta}\mathbb{Z})\leq  2\dim(X)^2 +8\dim(X)+11. \]
\end{Cor}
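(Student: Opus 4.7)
The plan is to combine Propositions~\ref{p:ParAutLimRegOpDom} and~\ref{p:ResParAutRegOpDoms} with the defining property of the class $\mathcal{C}$, reducing matters to the zero-dimensional case already covered by \autoref{t:ResZeroDim}.

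First, I would apply \autoref{p:ParAutLimRegOpDom} to write $C_0(X)\rtimes_\theta\mathbb{Z}$ as a direct limit $\varinjlim_i C_0(X)\rtimes_{\theta^{(i)}}\mathbb{Z}$, where each $\theta^{(i)}$ is a partial automorphism on $X$ whose generated partial action has regular open domains $\{D_n^{(i)}\}_{n\in\mathbb{Z}}$. By \autoref{p:PerPropDimNucDr}(1), it suffices to establish the claimed bound uniformly in $i$.

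Fix $i$ and set $F_i\coloneqq\bigcap_{n\in\mathbb{Z}}\overline{D_n^{(i)}}$, with closures taken in $X$. \autoref{p:ResParAutRegOpDoms} then yields
\[\dimnuc(C_0(X)\rtimes_{\theta^{(i)}}\mathbb{Z})\leq \dimnuc(C_0(\partial_X F_i)\rtimes\mathbb{Z})+2\dim(X)^2+8\dim(X)+7.\]
The set $\partial_X F_i$ is closed in $X$ (because $F_i$ itself is closed in $X$) and $\theta^{(i)}$-invariant; indeed, this invariance is implicit in the proof of \autoref{p:ResParAutRegOpDoms}, where the short exact sequence arising from the open $\theta^{(i)}$-invariant subset $\mathrm{int}_X F_i$ of $F_i$ is used. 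Consequently, if I can show $\dim(\partial_X F_i)=0$, then \autoref{t:ResZeroDim} gives $\dimnuc(C_0(\partial_X F_i)\rtimes\mathbb{Z})\leq 4$, and the advertised bound $2\dim(X)^2+8\dim(X)+11$ follows.

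The one nontrivial step is therefore to verify that $\dim(\partial_X F_i)=0$, and this is where the hypothesis $Y\in\mathcal{C}$ is used. The set $\overline{F_i}^Y$ is closed in $Y$, so the defining property of $\mathcal{C}$ gives $\dim(\partial_Y\overline{F_i}^Y)=0$. I claim that $\partial_X F_i\subseteq \partial_Y\overline{F_i}^Y$. Indeed, any $x\in\partial_X F_i$ lies in $F_i\subseteq \overline{F_i}^Y$; and if some open $V\subseteq Y$ satisfied $x\in V\subseteq \overline{F_i}^Y$, then $V\cap X$ would be an open neighborhood of $x$ in $X$ contained in $\overline{F_i}^Y\cap X=F_i$ (the equality uses that $F_i$ is closed in $X$), forcing $x\in\mathrm{int}_X F_i$, a contradiction. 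Now \autoref{t:DimSub} yields $\dim(\partial_X F_i)\leq \dim(\partial_Y\overline{F_i}^Y)=0$, completing the argument. No step should present a serious obstacle; the only real subtlety is the passage from the intrinsic boundary in $X$ to the ambient boundary in $Y$, which is handled by the elementary inclusion above.
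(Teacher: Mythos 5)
Your proof is correct, and it follows the paper's core strategy: reduce via \autoref{p:ParAutLimRegOpDom} and \autoref{p:PerPropDimNucDr}(1) to partial automorphisms with regular open domains, apply \autoref{p:ResParAutRegOpDoms}, and then feed in \autoref{t:ResZeroDim} for the zero-dimensional boundary, yielding $4+2\dim(X)^2+8\dim(X)+7$. Where you genuinely diverge is in how the hypothesis $Y\in\mathcal{C}$ is brought to bear when $X$ is only locally compact in $Y$: the paper first treats the case where $X$ is closed in $Y$ (so that $X\in\mathcal{C}$ by \autoref{o:ClosedInC}, after disposing of $\dim(X)=0$ separately), and then handles general $X$ by noting it is open in $\mathrm{cl}_Y(X)$, regarding $\theta$ as a partial automorphism of $\mathrm{cl}_Y(X)$, and using the ideal inclusion $C_0(X)\rtimes\mathbb{Z}\subseteq C_0(\mathrm{cl}_Y(X))\rtimes\mathbb{Z}$ coming from \autoref{p:InvSetS.e.s} to transfer the bound. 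You instead prove the elementary inclusion $\partial_X F\subseteq \partial_Y\bigl(\overline{F}^{\,Y}\bigr)$ for $F$ closed in $X$ (your verification, using $\overline{F}^{\,Y}\cap X=F$, is correct), and then apply the defining property of $\mathcal{C}$ to $Y$ itself together with \autoref{t:DimSub}. This removes both the closed/non-closed case split and the extension step, and it never invokes \autoref{o:ClosedInC}; the paper's route, by contrast, stays closer to already-established machinery (restriction to closed subspaces and exact sequences) at the cost of an extra reduction. The only cosmetic caveat is the degenerate case $\partial_X F_i=\emptyset$ (where the dimension is $-1$ and the crossed product is $0$), which affects nothing.
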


\begin{proof}
	\autoref{t:DimSub} implies $\dim(X)\leq 1$. In the case where $\dim(X)=0$, we are done by \autoref{t:ResZeroDim}, so assume $\dim(X)=1$.
	First we treat the case that $X$ is closed inside $Y$. 
	Since nuclear dimension respects direct limits, \autoref{p:ParAutLimRegOpDom} allows to assume that the partial action generated by $\theta$ has regular open domains $\{D_n\}_{n\in\mathbb{Z}}$.
	As $X\in\mathcal{C}$ by \autoref{o:ClosedInC}, we have
	$$\dim\big(\partial_X\big(\bigcap_{n\in\mathbb{Z}}\overline{D_n}\big)\big)=0.$$
	Combine \autoref{t:ResZeroDim} with \autoref{p:ResParAutRegOpDoms} for the desired bound. If $X$ is not closed, denote by $\mathrm{cl}_Y (X)$ the closure of $X$ inside $Y$. Since $X$ is a locally compact subset of $Y$, it is also locally closed in $Y$ (that is, $X$ is open inside $\mathrm{cl}_Y (X)$). Therefore, we can regard $D_{-1}$ and $D_1$ as open subsets inside $\mathrm{cl}_Y(X)$, and $\theta:D_{-1}\to D_1$ as a partial automorphism generating a partial action on $\mathrm{cl}_Y(X)$. We have
	$$0\to C_0(X)\rtimes\mathbb{Z}\to C_0(\mathrm{cl}_Y(X))\rtimes\mathbb{Z}\to C_0(\mathrm{cl}_Y(X)\setminus X)\rtimes\mathbb{Z}\to 0,$$
	a short exact sequence of $C^*$-algebras. Thus,
	$$\dimnuc(C_0(X)\rtimes_\theta\mathbb{Z})\leq \dimnuc(C_0(\mathrm{cl}_Y(X))\rtimes_{\theta}\mathbb{Z}),$$
	and we can use the bound obtained for closed subsets inside $Y$, since $$\dim(\mathrm{cl}_Y(X))=\dim(X)=1.$$
\end{proof}

\end{document}